\documentclass[a4paper,12pt]{amsart} 
\usepackage{amsmath,amssymb,amsfonts, amsthm} 

\usepackage[usenames, dvipsnames]{xcolor}
\usepackage{tikz, tikz-cd}

\usepackage{booktabs, multirow}

\usepackage[unicode,bookmarks, pdftex]{hyperref}
\hypersetup{colorlinks=true,citecolor=NavyBlue,linkcolor=NavyBlue,urlcolor=Orange, pdfpagemode=UseNone, breaklinks=true}

\newcommand{\CC}{\mathbb{C}}
\newcommand{\FF}{\mathbb{F}}
\newcommand{\PP}{\mathbb{P}}
\newcommand{\QQ}{\mathbb{Q}}
\newcommand{\ZZ}{\mathbb{Z}}

\newcommand{\wt}{\widetilde}

\newcommand{\Ytilde}{\widetilde Y}

\newcommand{\Oh}{\mathcal{O}}

\newcommand{\gothM}{\mathfrak{M}}

\DeclareMathOperator{\Pf}{Pf}
\DeclareMathOperator{\Spec}{Spec}
\DeclareMathOperator{\Proj}{Proj}

\DeclareMathOperator{\Bl}{Bl}

\newtheorem{thm}{Theorem}[section]
\newtheorem{prop}[thm]{Proposition}
\newtheorem{cor}[thm]{Corollary}
\newtheorem{lem}[thm]{Lemma}
\newtheorem{rmk}[thm]{Remark}
\newtheorem{exam}[thm]{Example}

\newtheorem{ex}[thm]{Example}

\title{On T-divisors and intersections in $\overline \gothM_{1,3}$}

\author[S. Coughlan]{Stephen Coughlan}
\address{Stephen Coughlan\\Mathematisches Institut \\Lehrstuhl Mathematik VIII \\ Universit\"atsstra\ss e 30 \\ 95447 Bayreuth
\\Germany}
\email{stephen.coughlan@uni-bayreuth.de}

 \author[M. Franciosi]{Marco Franciosi}
 \address{Marco Franciosi\\Dipartimento di Matematica\\Universit\`a di Pisa \\Largo B. Pontecorvo 5\\I-56127  Pisa\\Italy}
 \email{marco.franciosi@unipi.it}
 
 \author[R.\ Pardini]{Rita Pardini}
 \address{Rita Pardini\\Dipartimento di Matematica\\Universit\`a di Pisa \\Largo B. Pontecorvo 5\\I-56127  Pisa\\Italy}
 \email{rita.pardini@unipi.it}

\author[J. Rana]{Julie Rana}
 \address{Julie Rana\\Department of Mathematics, Lawrence University, 711 E. Boldt Way, Appleton WI 54911, USA.}
\email{julie.f.rana@lawrence.edu}

 \author[S. Rollenske]{S\"onke Rollenske}
 \address{S\"onke Rollenske\\FB 12/Mathematik und Informatik\\
 Philipps-Univer\-si\-t\"at Marburg\\
 Hans-Meerwein-Str. 6\\
 35032 Marburg\\
 Germany}
 \email{rollenske@mathematik.uni-marburg.de}

\date{}

\begin{document}
\setcounter{tocdepth}{1}

\begin{abstract}
 The moduli space of  stable surfaces with $K_X^2 = 1$ and $\chi(X) = 3$ has at least two irreducible components that contain surfaces with T-singularities. We show that the two known components intersect transversally in a divisor. Moreover, we exhibit other new boundary divisors  and study how they intersect one another.
\end{abstract}

\maketitle

\tableofcontents

\section{Introduction}
Surfaces of general type and their moduli spaces is a classical subject of study that goes back to the beginning of the 20th century. 
The moduli spaces $\gothM_{K^2, \chi}$ classifying canonical models of surfaces of general type were constructed by Gieseker \cite{gieseker77} and we now have a modular compactification  $\overline\gothM_{K^2, \chi}$, the moduli space of stable surfaces,  due mainly to work of Koll\'ar, Shepherd-Barron, and Alexeev \cite{ksb88,alexeev94}. 

In this paper we continue the investigation of I-surfaces, that is, stable surfaces with $K_X^2 = 1$ and $\chi(X) = 3$, refining a conjectural picture developed by the last four authors in \cite{FPRR},
where I-surfaces with one T-singularity were studied. Note that T-singularities are precisely the quotient singularities that can occur in the closure of the Gieseker components of $\overline\gothM_{K^2, \chi}$.

Classical I-surfaces, i.e.~with canonical singularities,  or more generally I-surfaces with $K_X$ Cartier, are very well understood. They are double covers of the quadric cone in $\PP^3$
 branched over a quintic section and the vertex \cite{FPR17}.
 Nonetheless, progress on understanding their degenerations, or better all I-surfaces has been slow.

 A philosophically sound method consists of imposing the existence of one T-singularity  (i.e.~locally of the form $\frac{1}{dn^2}(1, dna-1)$). If the singularity imposes independent conditions on the stable surface then we get a stratum whose codimension coincides with the dimension of the local deformation space of the singularity.
 
For example, there is a T-divisor in 
$\overline\gothM_{1, 3}$ consisting of the stratum of I-surfaces with a singularity of type $\frac14 (1,1)$, obtained by allowing the branch locus of the double cover to pass through the vertex.
In \cite{FPRR} it was shown that the other I-surfaces with one T-singularity do not conform to this simple pattern.

 Our starting point  is the observation \cite[Cor.~1.2]{FPRR}  that  $\overline \gothM_{1,3}$
 has at least two irreducible components, both of dimension $28$:
  the Gieseker component $\gothM_{1,3}$ and the component $\gothM_{\text{RU}}$ whose general element is 
  an I-surface with a unique $\frac 1{25} (1, 14) $ singularity constructed by Rana and Urz\'ua in \cite{rana-urzua19}.

Confirming a suspicion from \cite{FPRR} we show that the two components intersect in a divisor parametrising  RU-surfaces of cuspidal type. We go on to investigate the intersections of the various T-divisors obtained so far.
  
  The schematic picture in Figure \ref{fig: schematic} illustrates our results, where we label each stratum by the singularities of its general element. 
  
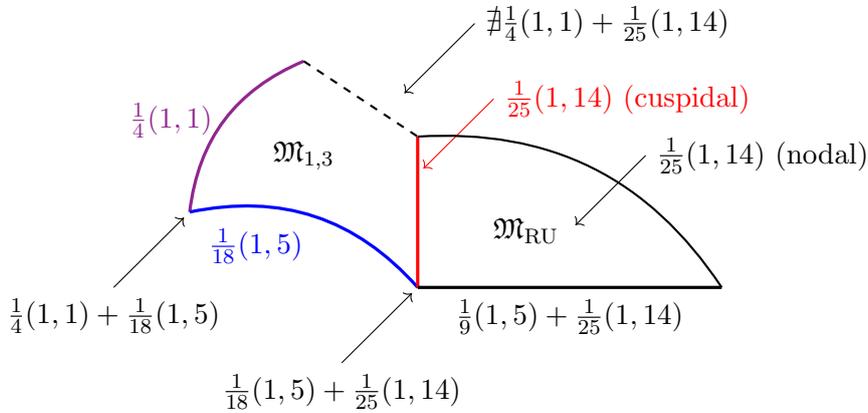
\begin{figure}[h]
\begin{center} \begin{tikzpicture}[thick, every node/.style = {font = \small}]
 \draw[Plum, very thick, bend right] (1.5,2) to node [left] { $\frac 14(1,1)$}  (0,0) ;
 \draw[blue, very thick, bend left] (0,0) to node [below left] { $\frac {1}{ 18}(1,5)$ } (3,-1);
  \draw[red, very thick] (3,1) to (3,-1);

\draw [thin, ->, shorten >=3, red]  (4,1.5) node[right]   {$\frac 1 {25}(1,14)$ (cuspidal)} -- (3,.5) ; 
  
  \node at (1.5, 0.75) {$\gothM_{1,3}$};
  
  \draw (3,1) to[bend left] (7,-1);
  
  \draw[very thick] (3,-1) to node[below ] {$\frac{1}9(1,5) + \frac{1}{25}(1,14)$} (7,-1);

  \draw[thin, ->, shorten >=3]  (6, .75) node[ right] {$\frac 1 {25}(1,14)$ (nodal)} to ++(-1, -1) node[left] {$\gothM_{\text{RU}}$} ;

   \draw[thin, ->, shorten >=3]  (-1, -1,0) node[below] {$\frac 14(1,1) + \frac {1}{ 18}(1,5)$ } -- ++(1,1);

    \draw[thin, ->, shorten >=3]  (2, -2,0) node[below] {$\frac {1}{ 18}(1,5)+\frac{1}{25}(1,14)$ } -- ++(1,1);

  \draw[dashed]  (1.5,2) to (3,1);
  \draw[thin, ->, shorten >=3]    (3.75, 2.5) node[right] {${\nexists}\frac{1}{4}(1,1) + \frac{1}{25}(1,14)$} to (2.75, 1.5);
   \end{tikzpicture}
   \caption{Schematic picture of (known parts of) the moduli space of I-surfaces }\label{fig: schematic}
\end{center}
\end{figure}

More precisely, we prove the following:
\begin{enumerate}
 \item I-surfaces with a $\frac1{25}(1, 14)$ singularity and of cuspidal type are smoothable; they form a divisor which is the intersection of the two components. (Section \ref{sec: smoothable})
 \item While the $\frac 19 (1,5)$ singularity does not occur in the closure of the Gieseker component, there is a divisor in the RU-component of surfaces with T-singularities $\frac 19 (1,5) + \frac{1}{25}(1, 14)$. (Section \ref{sec: divisor})
\item  The divisor of surfaces with an $\frac{1}{18}(1,5)$ singularity, known to be in the closure of the Gieseker component,  intersects the aforementioned divisors in a subset of codimension two parametrising surfaces with singularities $\frac {1}{ 18}(1,5)+\frac{1}{25}(1,14)$. (Prop.~\ref{prop: 3+5} and Ex.~\ref{ex: sing 5+3})
\item The divisors parametrising surfaces with a $\frac14(1,1)$ singularity respectively a $\frac{1}{18}(1, 5)$ singularity intersect, as expected, in a subset of codimension two, whose general element is a surface with singularities $\frac 14(1,1) + \frac {1}{ 18}(1,5)$. (Section \ref{subsection: sing 2+3})
\item We suspect the remaining intersection to be empty, but can only show that the combination $\frac{1}{4}(1,1) + \frac{1}{25}(1,14)$ cannot occur on an I-surface (Section \ref{sec: two sings}). If the two divisors intersect at all, then they do so in considerably more singular surfaces, and presumably in high codimension. 
\end{enumerate}
Two complementary approaches are used to establish these claims: geometric study of the minimal resolution and algebraic study of the canonical ring. Both points of view give us descriptions of the general surface in each stratum. Canonical rings are needed to establish deformations and (partial) smoothings via explicit equations, and the study of configurations of rational curves on the minimal resolution gives the non-existence result. 

Having established a complicated format for the canonical ring of the RU-surface in Corollary \ref{cor!index-5-format}, we include   a very surprising alternative description of the same surface as a hypersurface in weighted projective space in Section \ref{sect: RU hypersurface}.

\subsection*{Acknowledgements}

M.F.  and R.P. are  partially supported by the project PRIN 
 2017SSNZAW$\_$004 ``Moduli Theory and Birational Classification"  of Italian MIUR and members of GNSAGA of INDAM.  

J.R. and S.R. are grateful for the support of an NSF-AWM Mentoring Travel Grant, which allowed for an extended visit in June 2021. 

J.R. is partially supported by NSF LEAPS-MPS grant 2137577.

\section{Preliminaries}

We work over the complex numbers. Linear equivalence is denoted by $\sim$.

 For a Hirzebruch surface $\FF_n$,  we denote by $\sigma_{\infty}$ the negative section and by $\Gamma$ the class of a ruling, so that  a section $\sigma_0$ disjoint from $\sigma_{\infty}$ is linearly equivalent to
  $n\Gamma +\sigma_{\infty}$.

An I-surface $X$ is   a stable surface with $K_X^2=1$, $p_g=2$, $q=0$ 
(see   \cite{FPRR}).\footnote{  To exclude some more pathological (non-smoothable) examples (compare \cite{rollenske21}) we could be more specific and fix the Hilbert series of the canonical divisor  to be $h(t) = \frac{1-t^{10}}{(1-t)^2(1-t^2) (1-t^5)}$.}

A curve is an effective divisor, not necessarily irreducible or reduced.  For $n$  a positive integer, an irreducible  $(-n)$-curve $D$ on a smooth surface is a smooth rational curve with $D^2=-n$.
 
 A T-singularity $Q$ is either a rational double point or a   2-dimensional 
quotient singularity of type $\frac{1}{dn^2}(1, dna-1)$, where $n>1$ and  $d,a 
>0$ are integers with $a$ and $n$ coprime.
These are precisely the quotient singularities that admit a  $\QQ$-Gorenstein
smoothing, that is, that can occur on smoothable stable surfaces 
(cf.~\cite[\S3]{ksb88}).

The exceptional divisor of the minimal  resolution of  a T-singularity $\frac{1}{dn^2}(1, dna-1)$ is a so-called \emph{T-string}, a string of rational curves  with self-intersec\-tions
 $-b_1,-b_2, \ldots, -b_r$ given by the Hirzebruch--Jung continued fraction 
expansion $[b_1,b_2,\ldots, b_r]$ of $\frac{dn^2}{dna-1}$ (see, 
e.g.~\cite[Chapter 10]{CLS}). The index of $X$ at $Q$ is $n$.

\section{Computing the canonical ring of the RU-surface}\label{sec: canonical-ring}
For the reader's convenience we recall the construction of a general RU-surface. 
\begin{exam}\label{ex: RU}
Let $Y$ be an elliptic surface with $p_g(Y)=2$, $q(Y)=0$ such that:
\begin{itemize}
\item $Y$ has a $(-3)$-section  $A$
\item  all the elliptic   fibers are irreducible. 
\end{itemize}
By \cite[Lemma  3.8]{FPRR}, the surface $Y$ is a double cover $\pi\colon  Y\to \FF_6$ branched on a smooth divisor $D\in |\sigma_\infty+3\sigma_0|$.

Let $F_1$ be a singular fiber and let $q$ be its singular point: blow up $F_1$ at $q$ and then at a point $q_1$ infinitely near to $q$ and lying on the strict transform of 
$F_1$ to get a surface $\tilde Y$.
 The strict transform of $F_1$ is a $(-5)$-curve $B$, the strict transform of $A$ (that we still denote by $A$) is a $(-3)$-curve,  the strict transform of the curve of the first blow up is a $(-2)$-curve, which we call $C$, so that  $A,B,C$ is a string of type $[3,5, 2]$. Note that this is true both for  $F_1$ nodal and for $F_1$ cuspidal.  Then the string $A,B,C$ can be blown down to obtain an I-surface  with unique singularity of type $\frac{1}{25}(1,14)$, compare Figure \ref{fig: RU surface}. 
 
 Since we assumed the fiber $F_1$ to be irreducible it is either a nodal curve (type $I_1$) or a cuspidal curve (type $II$) and we call $X$ a nodal or cuspidal RU-surface respectively. 
 \end{exam}
We know from \cite[Prop.~3.13]{FPRR} that nodal RU-surfaces form an open subset of an irreducible component $\gothM_\text{RU}$ of the moduli space. In order to understand the interaction of this component with the Gieseker component, we study RU-surfaces from the point of view of canonical rings.

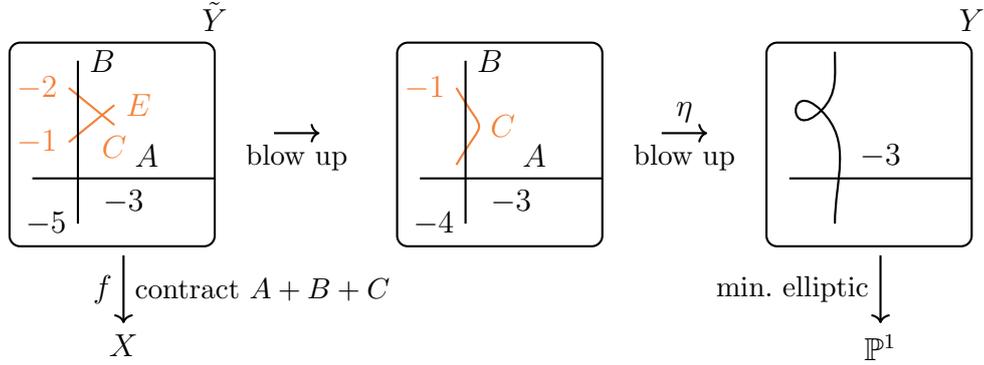
\begin{figure}
\begin{center}
\begin{tikzpicture}
[curve/.style ={thick, every loop/.style={looseness=10, min distance=30}},
exceptional/.style = {Orange},
scale = 0.6
]

\begin{scope}[xshift=-16.6cm]
\draw[thick, rounded corners] (-2.5, -1.5) rectangle (2, 3) node [above ] {$\tilde Y$};
\draw [curve, exceptional] (-1.2,.8) node[below, left]{$-1$} to  ++ ( 1, .82) node[ right]{$E$};
\draw [curve, exceptional] (-1.2,2) node[above,left ]{$-2$} to  ++(1,-.82) node[ below ]{$C$};
\draw [curve] (-1,-1) node[left]{$-5$} to  (-1,2.6) node[right] {$B$};
\draw [curve] (-2,0)  to node [below] {$-3$}  node[ above right] {$A$} (2,0);
\draw [curve,->] (3.3, 1) to node[above]  {} node[below] {\small blow up} ++(1,0);
\draw [curve,->] (0, -1.7) to node [left] { $f$} node [right] {\small contract $A+B+C$} ++(0,-1.5) node [below] {$X$} ;
\end{scope}

\begin{scope}[xshift=-8.1cm]
\draw[thick, rounded corners] (-2.5, -1.5) rectangle (2, 3) node [above right] {};
\draw [curve, exceptional] (-1.2,2) node[above,left ]{$-1$} to[bend left, looseness = 2]  node[  right]{$C$} ++(0,-1.7);
\draw [curve] (-1,-1) node[left]{$-4$} to  (-1,2.6) node[right] {$B$};
\draw [curve] (-2,0)  to node [below] {$-3$}  node[ above right] {$A$} (2,0);
\draw [curve,->] (3.3, 1) to node[above]  {$\eta$} node[below] {\small blow up} ++(1,0);
\end{scope}

\begin{scope}
\draw[thick, rounded corners] (-2.5, -1.5) rectangle (2, 3) node [above] {$Y$};
\draw [curve] (-1,-1) to[out=90, in=-45] (-1.3, 1.5) to[out=135, in =225,loop] () to[out=45, in = 270]  (-1,2.8);
\draw [curve] (-2,0)  to node [above] {$-3$}  (2,0);
\draw [curve,->] (0, -1.7) to  node[left]{\small min.\ elliptic} ++(0,-1.5) node [below] {$\PP^1$};
\end{scope}
\end{tikzpicture}
\end{center}
\caption{Construction of an RU-surface, nodal case}\label{fig: RU surface}
\end{figure}

\subsection{Strategy}
Let us explain the strategy underlying the algebraic computations along the following diagram.
\[
\begin{tikzcd}
 {} &
  \tilde Y \arrow{dl}{f}[swap]{\text{contract $A+B+C$}} \arrow[hookrightarrow]{r}\arrow{dr}{\epsilon} &
 {\tilde {\mathcal F}}\arrow{dr}{\text{Two toric blow ups}}
 \\
 X& & Y  \arrow[hookrightarrow]{r}\arrow{dr}[swap]{2:1} & { \mathcal F}\dar\arrow{dr}\\
 &&& \FF_6\rar & \PP^1
 \end{tikzcd}
\]
\begin{enumerate}
\item Construct $Y$ in toric bundle $\mathcal{F}$ over $\PP^1$.
\item Blow up to $\tilde Y$ in a new toric variety $\tilde{\mathcal{F}}$.
\item Construct the canonical model of $X$ by writing $R(X,K_X)$ as a subring of the Cox ring of $\tilde Y$.
\item Study $\QQ$-Gorenstein deformations of $X$ by deforming the canonical ring. 
\end{enumerate}
The deformations in Step (4) were originally found by considering all deformations over the base $\Spec \CC[\varepsilon]/(\varepsilon^k)$ for $k=2$ and extending them to $k=3,4$ etc.~according to ideas of Reid \cite{reid90}. Rather than reproducing these unwieldy computations, we express the final results using formats for Gorenstein rings.

\subsection{The elliptic surface}
Instead of using the double cover $Y\to\FF_6$, which leads to an elliptic surface with fibers polarised in degree 2, we construct $Y$ via the halfpolarisation $A$, so that the fibers have degree 1.
Consider the toric variety $\mathcal{F}$ with Cox ring 
\[\begin{pmatrix}t_0&t_1&s_1&s_0&\zeta\\1&1&-3&0&0\\0&0&1&2&3\end{pmatrix}\]
and irrelevant ideal $I=(t_0,t_1)\cap(s_0,s_1,\zeta)$. Geometrically, this is a $\PP(1_{s_1},2_{s_0},3_{\zeta})$-bundle over $\PP^1_{t_0, t_1}$. Alternatively, $\mathcal{F}\cong(\CC^5\setminus V(I))/(\CC^*)^2$ where the action is determined by the columns of the above matrix.
The subvariety $\zeta=0$ is isomorphic to $\FF_6$.  Write $\Gamma$ for the fiber of $\FF_6\to\PP^1$ with base coordinates $t_0,t_1$. The fiber coordinates are $s_0,s_1^2$; $(s_0=0)$ being the positive section $\sigma_0$ and $(s_1^2=0)$ being the negative section $\sigma_{\infty}$. The elliptic surface $Y$ is a relative sextic in $\mathcal{F}$ and the halfpolarisation $A$ is a section cutting out one Weierstrass point on each fiber. 

\begin{lem}\label{lem: elliptic surface}
Let $Y$ be a double cover of $\FF_6$ branched over a smooth element $D$ of $|{-6\Gamma+4\sigma_0}|$. Then $Y$ is a hypersurface in $\mathcal{F}$ of bidegree 
$(0,6)$. 
 One can choose coordinates so that $Y$ has a nodal or cuspidal fiber $B$ over $(0,1)$ with singularity at the point with coordinates $(0,1;0,1,0)$. The equation of $Y$ can be written in the form:
\[(\zeta-\theta t_1^3s_0s_1)\zeta=s_0^3+t_0k'_{11}s_0s_1^4+t_0(t_0l'_{16}+\tau t_1^{17})s_1^6,\]
where $k'_{11}(t_0,t_1),l'_{16}(t_0,t_1)$ are general polynomials in $t_0,t_1$ of respective degrees $11,16$ and $\theta$, $\tau$ are parameters. 
In particular, the special fiber is nodal if $(\theta,\tau\ne0)$, cuspidal if $(\theta=0)$, type $I_2$ if $(\tau=0)$ and type III if $(\theta=\tau=0)$.\footnote{More precisely, in the latter two cases, the surface $Y$ is singular and its minimal resolution has a fiber of the given type.}
\end{lem}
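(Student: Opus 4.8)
The plan is to realise $Y$ as an explicit hypersurface in the toric bundle $\mathcal{F}$ and then read off the geometry of the special fiber from the equation. First I would recall that a double cover $\tilde\pi\colon Y\to\FF_6$ branched over $D\in|{-6\Gamma+4\sigma_0}|$ is encoded, in the language of the Weierstrass/sextic model, by adjoining a coordinate $\zeta$ with $\zeta^2$ equal to (a twist of) the branch section; the bidegrees in the Cox matrix $\begin{pmatrix}t_0&t_1&s_1&s_0&\zeta\\1&1&-3&0&0\\0&0&1&2&3\end{pmatrix}$ are exactly arranged so that the relative sextic (bidegree $(0,6)$) equation is the general $\FF_6$-double cover written in Weierstrass form. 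So the first step is purely bookkeeping: write the most general bidegree-$(0,6)$ polynomial, note that the monomials available are $\zeta^2$, $\zeta s_0 s_1 \cdot(\deg 3\text{ in }t)$, $s_0^3$, $s_0^2 s_1^2\cdot(\deg 6)$, $s_0 s_1^4\cdot(\deg 12)$, $s_1^6\cdot(\deg 18)$, and observe that the coefficient of $\zeta^2$ is a nonzero constant (else $Y$ is reducible or not of the desired type), so we may scale it to $1$.

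Next I would normalise using the automorphisms of $\mathcal{F}$ that preserve the fibration. Completing the square in $\zeta$ is not available since we want to keep a special fiber singular and track its type, so instead I would use the torus action and the ${\rm PGL}_2$ on $\PP^1_{t_0,t_1}$ together with rescalings of $s_0,s_1,\zeta$ to (i) move the chosen singular fiber to $(t_0,t_1)=(0,1)$, (ii) clear the $s_0^2 s_1^2$ term and most of the $\zeta s_0 s_1$ term by an $s_0\mapsto s_0+(\dots)s_1^2$ type substitution and a $\zeta\mapsto\zeta+(\dots)s_0 s_1+(\dots)s_1^3$ shift, and (iii) normalise leading coefficients. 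The outcome should be precisely the displayed form
\[
(\zeta-\theta t_1^3 s_0 s_1)\zeta=s_0^3+t_0 k'_{11}s_0 s_1^4+t_0(t_0 l'_{16}+\tau t_1^{17})s_1^6,
\]
where the factor $t_0$ in front of the $s_0 s_1^4$ and $s_1^6$ terms is exactly the condition that the fiber over $(0,1)$ (i.e.\ $t_0=0$) degenerates: setting $t_0=0$ gives $(\zeta-\theta t_1^3 s_0 s_1)\zeta=s_0^3$, a plane cubic in the weighted fiber $\PP(1,2,3)$ with a singular point.

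Then I would do the local analysis of the fiber over $(0,1)$ at the point $(0,1;0,1,0)$: set $t_1=1$, $s_0=1$, so local coordinates are $t_0,\zeta,s_1$ (with $s_1$ a genuine coordinate since we are away from $\sigma_\infty$), and expand. On the fiber $t_0=0$ the equation becomes $\zeta^2-\theta\zeta s_1 = s_1^3+\dots$, i.e.\ $\zeta^2-\theta\zeta s_1-(\text{higher order in }s_1)=0$: for $\theta\neq0$ this is a node (two distinct tangent directions from $\zeta(\zeta-\theta s_1)$), for $\theta=0$ it is $\zeta^2=s_1^3+\dots$, a cusp. To distinguish the $I_2$ / type III cases when $\tau=0$ I would check whether the total space $Y$ itself is singular at that point by including the $t_0$-dependence: the lowest-order terms in $(t_0,\zeta,s_1)$ are $\zeta^2-\theta\zeta s_1 - t_0 k'_{11}(0,1)s_1^4 - \tau t_0 s_1^6 - \dots$ against $s_1^3$; the key is the coefficient $\tau$ of the would-be $t_0 s_1^6$ term (and $k'_{11}(0,1)$, generic hence nonzero), which governs whether the singularity of $Y$ propagates, forcing a resolution that inserts an extra $(-2)$-curve and turns $I_1\to I_2$, $II\to III$.

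The main obstacle I anticipate is step two: carrying out the coordinate changes carefully enough to be sure that \emph{exactly} the stated monomials survive with \emph{exactly} the stated $t_0$-divisibility, and that the remaining parameters $\theta,\tau$ together with the general $k'_{11},l'_{16}$ really do parametrise all such $Y$ (so that genericity of $k'_{11},l'_{16}$ is legitimate and the fiber types are as claimed for general choices). This is essentially a weighted-projective normal-form computation in the spirit of Tate's algorithm for Weierstrass fibrations, adapted to the relative sextic in $\PP(1,2,3)$; it is routine in principle but the bidegree bookkeeping and the infinitely-near blow-up structure (matching the $(-5)$-curve $B$ from Example~\ref{ex: RU}) need care. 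The local Kodaira-type classification in the last step is then standard.
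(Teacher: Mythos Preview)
Your overall strategy is the paper's: write the general relative sextic, normalise by successive substitutions, and run a Tate-style local analysis near the singular point of the special fiber. Two points need correcting.

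First, the remark that completing the square in $\zeta$ ``is not available'' is off. The paper \emph{does} complete the square first, obtaining $\zeta^2=s_0^3+j_6 s_0^2 s_1^2+k_{12}s_0 s_1^4+l_{18}s_1^6$; then kills $j_6$ via Tschirnhausen $s_0\mapsto s_0+\tfrac13 j_6 s_1^2$; then uses the discriminant $4k^3+27l^2$ to force the fiber over $(0,1)$ to be singular (this is what produces the $t_0$-divisibility you want); then shifts $s_0\mapsto s_0+\varepsilon t_1^6 s_1^2$ to put the singularity on $\sigma_0$; and only at the very end \emph{un}-completes the square via $\zeta\mapsto\zeta+\theta t_1^3 s_0 s_1$ (with $\theta^2=3\varepsilon$). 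That last move has nothing to do with preserving the fiber type---it is done so that the two subsequent blow-ups in the next lemma occur at torus-fixed points.

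Second, and this is a genuine slip, your local chart is at the wrong point. After the normalisation the fiber singularity sits on $\sigma_0\colon(s_0=0)$, so one must set $t_1=s_1=1$ and use $t_0,s_0,\zeta$ as local coordinates, not $t_1=s_0=1$ with $t_0,s_1,\zeta$. Substituting $s_0=1$ into the displayed equation on the fiber $t_0=0$ gives $(\zeta-\theta s_1)\zeta=1$, which is nowhere near your claimed ``$\zeta^2-\theta\zeta s_1=s_1^3+\dots$''; what you have written is exactly the correct computation with $s_0$ and $s_1$ interchanged throughout. In the right chart the fiber equation is $\zeta^2-\theta\zeta s_0=s_0^3$ (node for $\theta\ne0$, cusp for $\theta=0$), and the full surface equation reads
\[
\zeta^2=\tau t_0+\bigl(3\varepsilon s_0^2+k'_{11}(0,1)t_0 s_0+l'_{16}(0,1)t_0^2\bigr)+\text{h.o.t.},
\]
so the linear term $\tau t_0$ governs smoothness of $Y$: for $\tau\ne0$ the total space is smooth, while for $\tau=0$ one gets an $A_1$ point whose resolution inserts the extra fiber component ($I_2$ resp.\ $III$). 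Once the $s_0\leftrightarrow s_1$ swap is undone your sketch is correct.
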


\begin{proof}
The linear system $|{-6\Gamma+4\sigma_0}|$ on $\FF_6$ decomposes into $A+|3\sigma_0|$. By the above discussion, the branching over $A$ is inherited from the structure of the toric variety, and thus $Y$ has equation:
\[\zeta^2=s_0^3+j_{6}(t_0,t_1)s_0^2s_1^2+k_{12}(t_0,t_1)s_0s_1^4+l_{18}(t_0,t_1)s_1^6\]
where $\zeta$ is the double cover variable and $j,k,l$ are general polynomials  in $t_0,t_1$ of respective degrees $6,12,18$ so that the right hand side of the equation cuts out a general element of $|3\sigma_0|$. The coefficient of $s_0^3$ is non-zero because otherwise two components of $D$ would intersect giving a singularity. We normalise this coefficient to be $1$ and then we use the Tschirnhausen transformation $s_0\mapsto s_0+\frac13j_6 s_1^2$ to remove $j_6$.

The elliptic fibration $Y\to\PP^1$ has singular fibers when the discriminant $\Delta:=4k^3+27l^2$ vanishes. 
Assume that the fiber $B\colon(t_0=0)$ is singular, so that $\Delta|_B$ vanishes. Note that $\Delta|_B=4\alpha^3+27\beta^2$, where $\alpha$ (resp.~$\beta$) is the coefficient of $t_1^{12}$ in $k$ (resp.~$t_1^{18}$ in $l$). Hence there exists $\varepsilon$ with $\alpha=-3\varepsilon^2$ and $\beta=2\varepsilon^3$.

Next we use the coordinate change $s_0\mapsto s_0+\varepsilon t_1^6 s_1^2$ to move the singularity of $B$ onto the section $\sigma_0\colon (s_0=0)$, and the equation of $\tilde Y$ becomes
\[\zeta^2=s_0^3+3\varepsilon t_1^6s_0^2s_1^2+t_0k'_{11}(t_0,t_1)s_0s_1^4+t_0(t_0l'_{16}(t_0,t_1)+\tau t_1^{17})s_1^6.\]
If $\varepsilon$ is zero then the fiber $B$ is cuspidal. We write now $\theta^2:=3\varepsilon$ and
use the coordinates change $\zeta\mapsto\zeta+\theta t_1^3s_0s_1$ to move the $3\varepsilon t_1^6$-term to the left side. This gives the claimed equation.

Close to $(0,1;0,1,0)$ we can set $t_1 = s_1 =1$ so that the equation becomes
\begin{align*}\zeta^2 &=s_0^3+3\varepsilon s_0^2+t_0k'_{11}(t_0,1)s_0+t_0(t_0l'_{16}(t_0,1)+\tau )\\
 & = \tau t_0 + \left(3\epsilon s_0^2 + t_0s_0 k'_{11}(0,1) + t_0 ^2 l'_{16}(0,1)\right)+ \text{h.o.t.}, 
\end{align*}
 which defines a smooth surface if $\tau \neq 0$. Otherwise, if $\tau = 0$ (resp.~$\varepsilon=\tau=0$), we get a fiber of type $I_2$, respectively $I\!I\!I$ after resolving the $A_1$ surface singularity.
\end{proof}

Note that the choice of square-root $\theta$ corresponds to a choice of branch for the second blowup of the nodal fiber (cf.~Figure \ref{fig: RU surface}). 
The curious change of coordinates on $\zeta$ ensures that the blowups are at torus fixed points and thus in the next Section they can be expressed in terms of toric geometry.

\subsection{The resolution $\tilde Y$}
Consider now the toric variety $\tilde{\mathcal{F}}$ with Cox ring
\begin{equation}\label{eq: cox}
\begin{pmatrix}t_0&t_1&s_1&s_0&\zeta&c&e\\
1&1&-3&0&0&0&0\\
0&0&1&2&3&0&0\\
2&0&0&1&1&-1&0\\
1&0&0&0&1&1&-1
\end{pmatrix}
\end{equation}
and irrelevant ideal 
\begin{multline*}
(t_0,t_1)\cap(s_0,s_1,\zeta)\cap(c,t_1)\cap(c,s_1)\cap(t_0,s_0,\zeta)\\
\cap(e,t_1)\cap(e,s_0)\cap(e,s_1)\cap(t_0,\zeta,c). 
\end{multline*}
 This is the toric blow up of $\mathcal{F}$ at the subscheme $t_0^2=s_0=\zeta=0$ with exceptional divisor $(c=0)\cong\PP(1,1,2)$, followed by the toric blowup at the subscheme $t_0=\zeta=c=0$ with exceptional divisor $(e=0)\cong\PP^2$. Indeed, consider the subvariety $(e=0)$. The structure of the irrelevant ideal implies that $t_1,s_0,s_1$ are non-zero. Thus we apply part of the $(\CC^*)^4$-action (first three rows of the matrix \eqref{eq: cox}) to rescale these three coordinates $t_1=s_0=s_1=1$. We are left with the $\CC^*$-quotient of $\CC^3_{t_0,\zeta,c}$ induced by the last row with irrelevant ideal $(t_0,\zeta,c)$. In the same way, $(c=0)$ is the toric subvariety with Cox ring
\[\begin{pmatrix}
t_0&s_0&\zeta&e\\
2&1&1&0\\
1&0&1&-1
\end{pmatrix}\]
and irrelevant ideal $(s_0,e)\cap(t_0,\zeta)$. That is, the blowup of $\PP(2_{t_0},1_{s_0},1_{\zeta})$ at the point $(0,1,0)$.

\begin{rmk} A reference for this approach to toric varieties is Chapter 14 of the book \cite{CLS}. The generators of the Cox ring (columns of \eqref{eq: cox}) correspond to primitive generators of the rays in the fan $\Sigma$ of $\tilde{\mathcal{F}}$ via Gale duality, and the irrelevant ideal encodes the cones of $\Sigma$. For example, the generator $c$ gives a ray inside the cone $\sigma$ spanned by primitive vectors $v_{t_0}$, $v_{s_0}$, $v_{\zeta}$ with primitive generator $v_c=2v_{t_0}+v_{s_0}+v_{\zeta}$ determined by the third row. The components of the irrelevant ideal involving $c$ ensure that the cones of $\Sigma$ include the barycentric subdivision of $\sigma$, with respect to the ray generated by $v_c$.

\end{rmk}

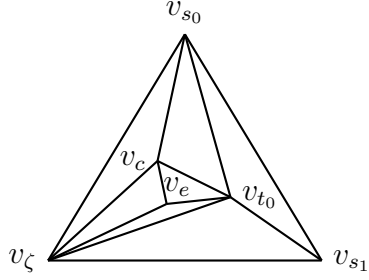
\begin{figure}
\begin{center}
\begin{tikzpicture}
[curve/.style ={thick, every loop/.style={looseness=10, min distance=30}},
exceptional/.style = {Orange},
scale = 0.6
]
\begin{scope}[xshift=-8.5cm]
\draw [curve] (3,0) to node[above,pos=1]{$v_{s_0}$}  (0,5);
\draw [curve] (0,5) to node[above,left,pos=1]{$v_{\zeta}$} (-3,0);
\draw [curve] (-3,0)  to node[above,right,pos=1]{$v_{s_1}$} (3,0);
\draw [curve] (-3,0)  to node[above,right,pos=1]{$v_{t_0}$} (1,1.4);
\draw [curve] (3,0)  to node{} (1,1.4);
\draw [curve] (0,5)  to node{} (1,1.4);

\draw [curve] (-3,0)  to node[above,left,pos=1]{$v_c$} (-0.6,2.2);
\draw [curve] (0,5)  to node{} (-0.6,2.2);
\draw [curve] (1,1.4)  to node{} (-0.6,2.2);

\draw [curve] (-3,0)  to node[above,left,pos=1.3]{$v_e$} (-0.4,1.25);
\draw [curve] (-0.6,2.2)  to node{} (-0.4,1.25);
\draw [curve] (1,1.4)  to node{} (-0.4,1.25);

\end{scope}
\end{tikzpicture}
\end{center}
\caption{A cross section of the fan $\Sigma(\tilde{\mathcal{F}})$, with the origin and $v_{t_1}$ behind the page.}
\end{figure}

\begin{lem}\label{lem!Ytilde} The double blowup $\tilde Y$ of $Y$ is a hypersurface in $\tilde{\mathcal{F}}$ of multidegree $(0,6,2,1)$.
The equation of $\tilde Y$ can be written in the form:
\begin{equation}\label{eqn!Ytilde}
(e\zeta-\theta t_1^3s_0s_1)\zeta=cs_0^3+cet_0k'_{11}s_0s_1^4+t_0(c^2e^3t_0l'_{16}+\tau t_1^{17})s_1^6
\end{equation}
where $k'_{11}=k'_{11}(c^2e^3t_0,t_1)$, $l'_{16}=l'_{16}(c^2e^3t_0,t_1)$ 
\end{lem}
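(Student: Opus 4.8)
The statement has two parts: (i) identify $\tilde Y$ as the strict transform of $Y$ under the two toric blowups $\tilde{\mathcal F}\to\mathcal F$, hence a hypersurface of the claimed multidegree $(0,6,2,1)$; and (ii) derive the explicit equation \eqref{eqn!Ytilde} from the equation of $Y$ in Lemma \ref{lem: elliptic surface}. The natural approach is to simply track the equation through the Cox-ring description of each blowup. First I would record that the blowup maps on Cox rings are given by substituting $t_0\mapsto c^2 e\, t_0$... actually more carefully: from the matrix \eqref{eq: cox} one reads off how the old coordinates of $\mathcal F$ pull back. The first toric blowup has exceptional $(c=0)$ and, comparing the grading matrix of $\mathcal F$ with the first three rows of \eqref{eq: cox} plus the $c$-column, the monomials $t_0, s_0, \zeta$ acquire factors of $c$ according to the third row: $t_0\mapsto c^2 t_0$... wait, the third row entry for $c$ is $-1$, so rather $s_0, \zeta$ each pick up one factor of $c$ and $t_0$ picks up $c^2$ (since its entry is $2$). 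Similarly the second blowup with exceptional $(e=0)$ and fourth row: $t_0\mapsto e\, t_0$ (entry $1$), $\zeta\mapsto e\,\zeta$ (entry $1$), while $c\mapsto c$ but $e$ has entry $-1$ so... I would state the substitution cleanly as: the birational map $\tilde{\mathcal F}\dashrightarrow\mathcal F$ is $t_0\mapsto c^2e^3 t_0$, $s_0\mapsto c e^2 s_0$ (?), $\zeta\mapsto c e^2\zeta$ — the precise exponents being forced by requiring the substituted Cox coordinates to have the correct $\mathcal F$-bidegrees $(1,0)$, $(0,2)$, $(0,3)$ and by the geometry that $k'_{11}, l'_{16}$ are functions of $c^2e^3 t_0$ and $t_1$, exactly as the lemma already indicates. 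So the cleanest route: \emph{define} $\tilde Y\subset\tilde{\mathcal F}$ by equation \eqref{eqn!Ytilde}, check its multidegree against \eqref{eq: cox} column-by-column, and then verify that the pushforward under $\epsilon\colon\tilde{\mathcal F}\to\mathcal F$ recovers the equation of Lemma \ref{lem: elliptic surface} on the complement of the exceptional loci.

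The second and more structural step is to verify that this hypersurface really is the \emph{strict transform}, i.e.\ that we have not picked up spurious exceptional components, and that the two centres $(t_0^2=s_0=\zeta=0)$ and $(t_0=\zeta=c=0)$ inside $\mathcal F$ (resp.\ its blowup) are exactly the two points blown up in Example \ref{ex: RU} / Lemma \ref{lem: elliptic surface} — namely the singular point $(0,1;0,1,0)$ of the special fibre and the infinitely near point on the strict transform of $B$. For this I would restrict \eqref{eqn!Ytilde} to each exceptional divisor: setting $c=0$ should produce, inside $(c=0)\cong$ (blowup of $\PP(2,1,1)$ at a point), a curve that maps isomorphically to the exceptional $\PP(1,1,2)$-slice meeting $Y$; and setting $e=0$ should exhibit $B$ as the $(-5)$-curve with the chain $A,B,C$ of type $[3,5,2]$ as described. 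Concretely, near the singular point we use the local computation already done at the end of the proof of Lemma \ref{lem: elliptic surface}: the local equation $\zeta^2 = \tau t_0 + (3\varepsilon s_0^2 + t_0 s_0 k'_{11}(0,1)+t_0^2 l'_{16}(0,1)) + \text{h.o.t.}$, and one checks that the toric blowup of the ambient space along $t_0^2=s_0=\zeta=0$ followed by $t_0=\zeta=c=0$ resolves this exactly into the chain described, with the $c$- and $e$-substitutions producing precisely the powers $ce$, $c^2e^3$ appearing in \eqref{eqn!Ytilde}.

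The remaining bookkeeping is routine: the multidegree $(0,6,2,1)$ is checked by assigning to each monomial in \eqref{eqn!Ytilde} its degree under the four rows of \eqref{eq: cox} and confirming all terms agree — e.g.\ $e\zeta\cdot\zeta$ has degrees $(0,6, 1+1, 1+1+1)=(0,6,2,3)$... so in fact I must be careful, the stated multidegree is $(0,6,2,1)$ and one has to recompute; I would present the degree of, say, the term $cs_0^3$: rows give $(0,6,1,1)$, hmm — this discrepancy signals that I have the substitution exponents slightly off and the point of the write-up is to pin them down correctly so that \emph{every} monomial in \eqref{eqn!Ytilde} lands in the single multidegree asserted. \textbf{The main obstacle} is precisely this: getting the powers of $c$ and $e$ attached to each of $t_0,s_0,\zeta$ exactly right so that (a) \eqref{eqn!Ytilde} is homogeneous of one fixed multidegree, (b) it restricts correctly on each exceptional divisor to show the blowup centres are the intended ones, and (c) it pushes forward to the equation of Lemma \ref{lem: elliptic surface}. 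Once the substitution $t_0\mapsto c^{a}e^{b}t_0$, $s_0\mapsto c^{a'}e^{b'}s_0$, $\zeta\mapsto c^{a''}e^{b''}\zeta$ is fixed by imposing (a) and matching the $\mathcal F$-bidegrees $(1,0),(0,2),(0,3)$, the rest is a direct check; I expect the correct exponents to be those already encoded implicitly in the lemma via the arguments $c^2e^3t_0$ of $k'_{11}$ and $l'_{16}$, namely $t_0\mapsto c^2e^3t_0$ together with the compensating $ce$ on $s_0$ and $e$ on $\zeta$ forced by homogeneity, and I would double-check against the irrelevant-ideal description of $(c=0)$ and $(e=0)$ given just before the lemma.
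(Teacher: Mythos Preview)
Your approach is essentially the paper's: compute the strict transform by pulling back the equation of $Y$ through the Cox-ring substitution determined by the toric blowups and dividing out the exceptional factor. The paper, however, performs the two blowups \emph{sequentially} rather than composing them, and this is exactly what dissolves the bookkeeping confusion you flag as your main obstacle. For the first blowup (exceptional $(c=0)$) the substitution is read off from the third row of \eqref{eq: cox} alone: $t_0\mapsto c^2t_0$, $s_0\mapsto cs_0$, $\zeta\mapsto c\zeta$, others fixed; pulling back the equation of Lemma \ref{lem: elliptic surface} and dividing by $c^2$ gives an intermediate equation for $Y^{(1)}$. For the second blowup (exceptional $(e=0)$) the fourth row gives $t_0\mapsto et_0$, $\zeta\mapsto e\zeta$, and $c\mapsto ec$ --- note that $c$ \emph{does} change, contrary to your aside, since its entry in row four is $1$ --- and dividing by $e$ yields \eqref{eqn!Ytilde} directly. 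Doing it in two steps makes the powers transparent and avoids the slips you hit: the composite substitution on $s_0$ is $s_0\mapsto ces_0$ (not $ce^2s_0$), and for the multidegree check one has $\deg(e\zeta^2)=(0,0,0,-1)+2(0,3,1,1)=(0,6,2,1)$ and $\deg(cs_0^3)=(0,0,-1,1)+3(0,2,1,0)=(0,6,2,1)$, both matching the claim.

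Your additional proposed checks --- that the centres are the correct points of $Y$ and that the restrictions to $(c=0)$ and $(e=0)$ give the expected exceptional curves --- are reasonable but not needed for the lemma as stated; the paper treats them in the discussion preceding the lemma and in the remark following it rather than in the proof itself.
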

\begin{proof}
The first blow up is at the subscheme $t_0^2=s_0=\zeta=0$ which is supported at the singularity of $B$. We add one new variable $c$ to the Cox ring of $\mathcal{F}$ and read off the blowup map using the third row of the Cox grading \eqref{eq: cox}. We use the same labels for the new coordinates:
\[t_0\mapsto c^2t_0,\ t_1\mapsto t_1,\ s_0\mapsto cs_0,\ s_1\mapsto s_1,\ \zeta\mapsto c\zeta
\]
Note the weighting on $t_0$. The equation of the first blowup $Y^{(1)}$ is obtained by pulling back the equation of $Y$ under this map and dividing by $c^2$:
\[(\zeta-\theta t_1^6s_0s_1)\zeta=cs_0^3+ct_0k'_{11}
(c^2t_0,t_1)s_0s_1^4+t_0(c^2t_0l'_{16}(c^2t_0,t_1)+\tau t_1^{17})s_1^6.\]

The second blowup is at $\zeta=c=t_0=0$ and the map is read off from the fourth row of the Cox grading \eqref{eq: cox}:
\[t_0\mapsto et_0,\ t_1\mapsto t_1,\ s_0\mapsto s_0,\ s_1\mapsto s_1,\ \zeta\mapsto e\zeta,\ c\mapsto ec
\]
The equation of the second blowup $\tilde Y=Y^{(2)}$ is then as claimed in \eqref{eqn!Ytilde}.
\end{proof}

\begin{rmk}
According to the discussion before Lemma \ref{lem!Ytilde}, the exceptional curve $C=\tilde Y\cap (c=0)$ is obtained by substituting $c=0$ and $t_1=s_1=1$ into equation \eqref{eqn!Ytilde}:
\[C\colon \left( (e\zeta-\theta s_0)\zeta=\tau t_0\right)\subset\Bl_{(0,1,0)}\PP(2_{t_0},1_{s_0},1_{\zeta}).\]
Recall that the parameter $\tau$ is the coefficient of $t_1^{17}$ in $l'_{17}$.

Thus $C$ is usually an irreducible rational curve. If $\theta=0$, then nothing especially interesting happens to $C$.
On the other hand, if $\tau=0$, then $C$ breaks into two rational curves meeting in a singularity on $\tilde Y$. In this situation, $Y$ has an $A_1$-singularity at the node of the fiber $B$. This case is treated in more detail later.
If $\theta=\tau=0$ then $C$ is a nonreduced double curve and $Y$ has an $A_1$ singularity at the node of $B$.

In a similar way, we find that the second exceptional curve is $E\colon (e=0)$, which implies $t_1=s_0=s_1=1$ with equation
\[E\colon\left(\theta\zeta=c+\tau t_0\right)\subset\PP^2_{t_0,\zeta,c}.\]
\end{rmk}

\subsection{Canonical rings from Cox rings}
We denote the Cox ring of $\tilde{\mathcal{F}}$ by $S$. This is a $\ZZ^4$-graded polynomial ring with generators $t_0$, $t_1$, $s_0$, $s_1$, $\zeta$, $c$, $e$. There is a $\ZZ$-linear coordinate change on $A_1(\tilde{\mathcal F})$, i.e., applied to the degrees, which shifts the $\mathbb{Z}^4$-grading on $S$ to:
\begin{equation}\label{eq: weight matrix}
\begin{pmatrix}
s_1 & t_0 & c & e & t_1 & s_0 & \zeta \\
1 & &&&0 & 2 & 3\\
&1&&&1 & 6 & 9 \\
&&1&&2 & 11 & 17 \\
&&&1&3 & 17 & 25
\end{pmatrix}
\end{equation}
By Lemma \ref{lem!Ytilde}, for $d\in\ZZ^4$ we have maps
\[S_d\to H^0(\Ytilde,d_1A+d_2B+d_3C+d_4E)\]
where the divisor classes are 
\[A\colon(s_1=0) ,\  B\colon(t_0=0) ,\  C\colon(c=0) ,\  E\colon(e=0),\   \Gamma\colon(t_1=0)\] 
on $\tilde Y$. Note that $(s_1=0)$ cuts out $A$ because $A$ is pulled back from the irreducible component $(s_1^2=0)$ of the branch locus of the double cover $Y\to\FF_6$. Thus the grading records the various linear equivalences on $\tilde Y$: 
\begin{align*}
 B&\sim \Gamma-2C-3E , \\ (s_0=0)&\sim 2A+6B+11C+17E ,\\  (\zeta=0)&\sim 3A+9B+17C+25E.
\end{align*}

The pullback of $K_X$ to $\tilde Y$ is 
\begin{align*}
{\Gamma+C+2E}+\tfrac15(3A+4B+2C)& \sim {B+3C+5E}+\tfrac15(3A+4B+2C)
\\
& \sim 5 E + \tfrac1 5 ( 3A + 9 B + 17 C). 
\end{align*}

\subsection{Generators}\label{sec!gens}
We want to construct the canonical ring $R(X,K_X)$ as the image of a map from the Cox ring $S$. 
To avoid having to keep track of corrections to multiplication maps in $R(X,K_X)$ due to rounding, we introduce formal $5$-th roots of $s_1,t_0,c$: $\alpha^5=s_1$, $\beta^5=t_0$, $\gamma^5=c$. 
The extension $S[\alpha,\beta,\gamma]$ is then a $(\frac15\ZZ)^3\oplus\ZZ$-graded ring containing $S$. In what follows, we consider  the ring homomorphism 
\[\iota \colon \bigoplus_{n\ge0} S[\alpha,\beta,\gamma]_{(\frac35n,\frac95n,\frac{17}5n,5n)}\to\bigoplus_{n\ge0} H^0(\tilde Y,nf^*K_X)\cong R(X,K_X)\]
The proof that this ring homomorphism is in fact surjective, is Corollary \ref{cor!is-canonical-ring}.

Let  $R'$ be the image of  $\iota$ and $X'=\Proj (R')$.

\begin{lem}\label{lem!generators} The graded ring 
\[R' \subseteq \bigoplus_{n\ge0}H^0\left(\tilde Y,n(5E+\tfrac15(3A+9B+17C)\right)\]
is generated by
\begin{align*}
x_0&=\alpha^3\beta^9\gamma^{17}e^5 &\text{deg 1}\\
x_1&=\alpha^3\beta^4\gamma^{7}e^2t_1&\text{deg 1}\\
y&=\alpha^6\beta^3\gamma^4et_1^3&\text{deg 2}\\
w&=\alpha^9\beta^2\gamma t_1^5&\text{deg 3}\\
u_0&=\alpha^2\beta^6\gamma^{13}e^3 s_0 & \text{deg 4}\\
u_1&=\alpha^2\beta\gamma^3 t_1s_0& \text{deg 4}\\
z&=\zeta& \text{deg 5}\\
t&=\alpha\beta^3\gamma^9e s_0^2& \text{deg 7}\\
g&=\alpha\beta^3\gamma^{14}s_0^5& \text{deg 17}\\
\end{align*}
\end{lem}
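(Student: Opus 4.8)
The plan is to exhibit the listed monomials explicitly and then prove that they generate $R'$ by a weight/dimension count in low degrees combined with the structure of the Cox ring.

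First I would verify that each of the nine proposed elements actually lies in $R'$, i.e., that each monomial in $\alpha,\beta,\gamma,t_1,s_0,s_1,\zeta,c,e$ displayed has multidegree of the form $(\tfrac35 n,\tfrac95 n,\tfrac{17}5 n,5n)$ for the appropriate $n$, using the weight matrix \eqref{eq: weight matrix} together with $\alpha^5=s_1$, $\beta^5=t_0$, $\gamma^5=c$. This is a direct check: for instance $x_0=\alpha^3\beta^9\gamma^{17}e^5$ has degree $(\tfrac35,\tfrac95,\tfrac{17}5,5)$, hence $n=1$; $y=\alpha^6\beta^3\gamma^4 e t_1^3$ has degree $(\tfrac{6}{5},\tfrac{18}{5},\tfrac{34}{5},10)$, hence $n=2$; and so on through $g$ in degree $17$. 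Simultaneously I would record that each of these is a genuine section on $\tilde Y$ of the bundle $n\bigl(5E+\tfrac15(3A+9B+17C)\bigr)$, which is automatic once the degree check passes since $\iota$ lands in $H^0$ by construction via Lemma \ref{lem!Ytilde}.

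Next comes the substance: surjectivity onto $R'$. Since $R'$ is by definition the image of $\iota$, it is generated by the images of all monomials $\alpha^a\beta^b\gamma^{c'}t_0^{?}\cdots$ of the correct graded pieces; the claim is that the subalgebra generated by $x_0,x_1,y,w,u_0,u_1,z,t,g$ is everything. I would argue degree by degree. The space $S[\alpha,\beta,\gamma]_{(\frac35 n,\frac95 n,\frac{17}5 n,5n)}$ is spanned by monomials, and one must show every such monomial is a product of the nine generators. The key bookkeeping device is that the variables $t_1,s_1,e$ carry the "first three rows" of grading, $\zeta,s_0,c$ and $t_0$ carry the heavier weights; fixing $n$ bounds the exponents. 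For small $n$ ($n=1,2,3$) one checks by hand that the only monomials available are exactly the listed ones (and products thereof), and then for larger $n$ one shows any monomial can be factored off a copy of $x_0$ or $x_1$ (degree-$1$ generators) to reduce $n$, so induction on $n$ finishes. The main obstacle will be handling the interplay between the formal roots $\alpha,\beta,\gamma$ and the requirement that exponents of $s_1=\alpha^5$, $t_0=\beta^5$, $c=\gamma^5$ be compatible: one must be careful that a given graded monomial in $S[\alpha,\beta,\gamma]$ of the right degree can always be written with $\alpha,\beta,\gamma$-exponents that are "spent" in blocks realized by the generators, rather than leaving a leftover fractional part that no product of generators can match. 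Concretely, the nine exponent vectors must be shown to generate (over $\ZZ_{\ge0}$, subject to the grading constraint) the full monoid of admissible exponent vectors; this is a finite, if fiddly, lattice-point computation, and I would organize it by peeling off the highest-weight variable ($\zeta$, then $s_0$, then $c$) and reducing to cases.

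Finally I would remark that this lemma only asserts generation of $R'=\Ima\iota$; the harder fact that $R'$ equals the full canonical ring $R(X,K_X)$ — equivalently that $\iota$ is surjective — is deferred to Corollary \ref{cor!is-canonical-ring}, so here one does not need to compare with $\bigoplus H^0(\tilde Y, nf^*K_X)$ beyond the containment already built into the definition of $\iota$. In writing up, I would present the degree check as a short table and the monoid-generation argument as an induction on $n$ with the base cases $n\le 3$ done explicitly, flagging the peeling-off of $\zeta$ and $s_0$ as the inductive step.
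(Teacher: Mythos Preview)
Your proposal and the paper's proof identify the same underlying problem: the monomials of $R'$ form the affine semigroup
\[
M=\bigl\{(n_1,\dots,n_7)\in\ZZ_{\ge0}^7:\delta(n_1,\dots,n_7)\in\ZZ_{\ge0}\cdot(\tfrac35,\tfrac95,\tfrac{17}5,5)\bigr\},
\]
and the task is to find its minimal generating set. The paper simply names this as a Hilbert-basis computation for a lattice cone and defers to standard algorithms (citing Miller--Sturmfels); it does not attempt a by-hand argument.

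Your attempt to do the computation by hand contains a genuine gap. The induction step ``for larger $n$ one shows any monomial can be factored off a copy of $x_0$ or $x_1$'' is false: the monomial $z^k=\zeta^k$ lies in degree $5k$ for every $k\ge1$ and is divisible by neither $x_0=\alpha^3\beta^9\gamma^{17}e^5$ nor $x_1=\alpha^3\beta^4\gamma^7e^2t_1$ (it contains no $\alpha,\beta,\gamma,e$ at all). Likewise $w^k$, $u_1^k$, and many mixed monomials in $w,u_1,z,t,g$ give counterexamples in arbitrarily large degree. So induction on $n$ via degree-$1$ peeling cannot work.

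Your fallback strategy --- strip off $\zeta$ (always possible, since $z=\zeta$), then reduce the $s_0$-exponent, then handle the remaining five variables --- is headed in a viable direction, but as written it is only a plan, not a proof: after removing $\zeta$ you still have to show that every admissible monomial in $\alpha,\beta,\gamma,e,t_1,s_0$ factors through $x_0,x_1,y,w,u_0,u_1,t,g$, and the $s_0$-step is not automatic because the $s_0$-bearing generators $u_0,u_1,t,g$ carry different side-exponents in $\alpha,\beta,\gamma,e,t_1$. Making this precise amounts exactly to computing the Hilbert basis of $M$, which is why the paper just invokes the algorithm. If you want a self-contained write-up, you should either carry out the full case analysis (it is finite but tedious), or state clearly that $M$ is a pointed affine semigroup and list its Hilbert basis as the output of a standard computation.
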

 
\begin{proof}The generators can be determined algorithmically as follows.
The grading on $S[\alpha, \beta, \gamma]$ is induced by the map $\delta\colon\ZZ^7\to(\frac15\ZZ)^3\oplus\ZZ$,
\[\delta(n_1,\dots,n_7)= 
\left(\tfrac{n_1}5, \tfrac{n_2}5,  \tfrac{n_3}5,  n_4,  n_5,  n_6,  n_7\right)
\begin{pmatrix}
1 & &&&0 & 2 & 3\\
&1&&&1 & 6 & 9 \\
&&1&&2 & 11 & 17 \\
&&&1&3 & 17 & 25
\end{pmatrix}^t
\]
via $\deg(\alpha^{n_1}\beta^{n_2}\gamma^{n_3}e^{n_4}t_1^{n_5}s_0^{n_6}\zeta^{n_7})=\delta(n_1,\dots,n_7)$.
Let $\Sigma=\ZZ^+\cdot(\frac35,\frac 95,\frac{17}5,5)$ be the cone in $(\frac15\ZZ)^3\oplus\ZZ$ generated by $\frac15(3A+9B+17C)+5E$, the pullback of $K_X$. 
Then the intersection of the preimage $\delta^{-1}\Sigma$ with the positive octant $(\frac15\ZZ^+)^3\oplus\ZZ^+$, is the cone of monomials in $R'$.
The primitive generators of this cone can be found using standard Hilbert basis algorithms for lattice cones (see e.g.~Chapter 7 of \cite{MS})  and these are the generators of $R'$.
\end{proof}

\begin{rmk}
It follows from Lemma \ref{lem!rolling} below, that the generator $g$ in degree 17 is eliminated by relations.
\end{rmk}

\begin{rmk}\label{rem: fixed part prelim}
At this stage, we already note that the fixed part of the canonical linear system is the image of the curve $E$, because both $x_0=\alpha^3\beta^9\gamma^{17}e^5$ and $x_1=\alpha^3\beta^4\gamma^{7}e^2t_1$ are divisible by powers of $e$. The other common factors correspond to curves $A,B,C$ which are contracted to the $\frac1{25}(1,14)$-point. A more precise description of the fixed part can be found in Remark \ref{rem: fixed part}.
\end{rmk}

\subsection{Relations}\label{sec!relations}
There are 10 binomial relations between the generators found in Lemma \ref{lem!generators} (excluding those involving $g$). These define a cone over the degree 5 generator $z$ in $\PP(1,1,2,3,4,4,5,7)$:
\begin{align*}
R_1\colon x_0y-x_1^3&=0 & R_2\colon x_0w-x_1^2y&=0 \\
R_3\colon x_1w-y^2&=0 & R_4\colon x_0u_1-x_1u_0&=0\\
R_5\colon x_1^2u_1-yu_0&=0 & R_6\colon x_1yu_1-wu_0&=0 \\
R_7\colon x_0t-u_0^2 &=0 & R_8\colon x_1t-u_0u_1&=0\\
R_9\colon  x_1u_1^2-yt&=0 & R_{10}\colon yu_1^2-wt&=0
\end{align*}

The equation \eqref{eqn!Ytilde} defining $\tilde Y$ induces several relations which cut out $X'$ inside this cone:
\begin{lem}\label{lem!rolling} There are four relations induced by \eqref{eqn!Ytilde} and these can be written as:
\begin{align*}
R_{11}\colon &&x_0P+x_1^2(\theta u_1z+\tau w^3)+u_0t &=0\\
R_{12}\colon &&x_1P+y(\theta u_1z+\tau w^3)+u_1t&=0\\
R_{13}\colon &&yP+w(\theta u_1z+\tau w^3)+u_1^3&=0\\
R_{14}\colon &&u_0P+x_1u_1(\theta u_1z+\tau w^3)+t^2&=0
\end{align*}
where $P_{10}=z^2+\dots$ is a general homogeneous form of degree 10 and $\theta$, $\tau$ are parameters.
The generator $g$ of degree $17$ is eliminated by a fifth relation $tP=\dots+g$.
\end{lem}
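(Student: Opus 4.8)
The strategy is to translate the single defining equation \eqref{eqn!Ytilde} of $\tilde Y$ into relations among the canonical generators of Lemma \ref{lem!generators} by multiplying through by appropriate monomials in $\alpha,\beta,\gamma,e,t_1,s_0,\zeta$ and then recognising the resulting monomial expressions as products of the generators $x_0,x_1,y,w,u_0,u_1,z,t,g$. Concretely, I would first rewrite \eqref{eqn!Ytilde} in the form
\[
e\zeta^2 - \theta t_1^3 s_0 s_1\zeta - c s_0^3 - ce t_0 k'_{11}s_0 s_1^4 - c^2 e^3 t_0^2 l'_{16}s_1^6 - \tau t_0 t_1^{17}s_1^6 = 0,
\]
and note that $\zeta^2=z^2$ and that the polynomial $P$ of degree $10$ should be built as the image of $e\zeta^2 + \dots$ divided by the appropriate power; more precisely one checks that the monomial $\alpha^3\beta^9\gamma^{17}e^6\zeta^2$ has degree $(3\cdot\tfrac35,\dots)$ — no, one must be careful: the correct normalisation is to multiply the whole equation by a fixed monomial $M$ so that $M\cdot e\zeta^2$ equals (a generator)$\cdot z^2$, and then the remaining terms automatically land on products of generators. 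I expect $M = \alpha^3\beta^9\gamma^{17}e^4$ or a close variant, chosen so that $M\cdot e\zeta^2 = x_0 z^2$ up to the correction terms absorbed into $P$.

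Having fixed $M$, the term $M\cdot c s_0^3$ becomes $x_0$ times a monomial; tracking exponents shows this monomial is (up to the $\alpha^5=s_1$, $\beta^5=t_0$, $\gamma^5=c$ substitutions) equal to $u_0 t$ or $u_0^2$ or similar, and similarly $M\cdot \theta t_1^3 s_0 s_1\zeta$ becomes $x_1^2\theta u_1 z$ (matching $R_{11}$) while $M\cdot\tau t_0 t_1^{17}s_1^6$ becomes $x_1^2\tau w^3$. This yields relation $R_{11}$. The other three relations $R_{12},R_{13},R_{14}$ are then obtained not from new equations but by \emph{rolling}: multiplying $R_{11}$ by further generators and using the binomial relations $R_1,\dots,R_{10}$ to re-express, e.g., $x_1\cdot R_{11}$ so that the leading monomial $x_0 x_1 P$ is rewritten via $R_4$ ($x_0u_1=x_1u_0$) or via syzygies of the scroll into $x_1^2 P\cdot(\text{something})$; alternatively, and more cleanly, one observes that $R_{11},\dots,R_{14}$ are exactly the $2\times 2$ "rolling factors" relations attached to the matrix whose minors give $R_1,\dots,R_{10}$, so that once $R_{11}$ is established the remaining three are forced by the scrollar syzygies. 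I would present it this way: the cone over $z$ in $\PP(1,1,2,3,4,4,5,7)$ cut out by $R_1$–$R_{10}$ is (a cone over) a scroll-type variety whose equations come in a $2\times 4$ (or similar) "rolling factors" format, and $R_{11}$–$R_{14}$ are the corresponding perturbation in that format with entries $(P,\ \theta u_1 z+\tau w^3,\ \text{the } t\text{-terms})$.

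For the final sentence — elimination of $g$ — I would exhibit the relation explicitly: $g$ appears linearly, so it suffices to find one relation of degree $17$ containing $g$ with coefficient a nonzero constant. Since $g=\alpha\beta^3\gamma^{14}s_0^5$ and $t=\alpha\beta^3\gamma^9 e s_0^2$, the product $tP$ (degree $7+10=17$) contains, via the $s_0^3 = (\text{image under }\iota\text{ of } c s_0^3/c)$ substitution coming from \eqref{eqn!Ytilde}, a term proportional to $\gamma^5 s_0^3 \cdot(\text{monomial}) = c s_0^3\cdot(\dots)$ which by the defining equation equals $e\zeta^2 - \dots$, and chasing this through produces $g$ with constant coefficient; hence $g = tP - (\dots)$ lies in the subring generated by $x_0,x_1,y,w,u_0,u_1,z,t$, so $g$ is redundant. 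The main obstacle I anticipate is purely bookkeeping: getting all exponents in the $(\tfrac15\ZZ)^3\oplus\ZZ$-grading to match simultaneously, i.e., verifying that the single monomial $M$ simultaneously turns \emph{every} term of \eqref{eqn!Ytilde} into an honest product of the listed generators (rather than into a non-generator monomial requiring further rewriting), and then confirming that $R_{12}$–$R_{14}$ genuinely follow from $R_{11}$ together with $R_1$–$R_{10}$ rather than needing independent input — this is where the "rolling factors" structure must be invoked carefully, and where one should double-check that no further relations in degrees $\le 17$ have been missed, which is exactly what Corollary \ref{cor!is-canonical-ring} will later certify.
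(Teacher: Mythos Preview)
Your proposal is correct and takes essentially the same approach as the paper: the excess monomial is indeed $M=\alpha^3\beta^9\gamma^{17}e^4$, giving $M\cdot e\zeta^2=x_0z^2$ and $M\cdot cs_0^3=u_0t$ as you anticipate, and the paper derives $R_{12},R_{13},R_{14}$ from $R_{11}$ by exactly the rolling procedure you describe, writing $R_{12}=\tfrac{x_1}{x_0}R_{11}$ and applying the ratios $x_0:x_1=x_1^2:y=u_0:u_1$ coming from $R_1,R_4$. The elimination of $g$ is also handled just as you sketch, via the fifth relation $tP=\dots$ in which the $cs_0^3$ term contributes $g$ with nonzero coefficient.
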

\begin{proof}
The leading term of the equation \eqref{eqn!Ytilde} cutting out $\tilde Y$ in $\tilde{\mathcal{F}}$ is $e\zeta^2$. By Lemma \ref{lem!generators} the unique generator involving $\zeta$ is $z$, and $e$ appears in $x_0$, $x_1$, $y$, $u_0$, $t$, thus we expect five relations induced by \eqref{eqn!Ytilde}.

We first study the relation $R_{11}$ more precisely. Using Lemma \ref{lem!generators} we can write in  $S[\alpha,\beta,\gamma]$
\[x_0z^2 = \left(\alpha^3\beta^9\gamma^{17}e^5\right) \zeta^2= \left(\alpha^3\beta^9\gamma^{17}e^4\right)\left(e\zeta^2\right),\]
and multiplying \eqref{eqn!Ytilde} with the excess monomial $\alpha^3\beta^9\gamma^{17}e^4$, we get
for the left hand side
\begin{align*}
 \alpha^3\beta^9\gamma^{17}e^4\left(e\zeta-\theta t_1^3s_0s_1\right)\zeta
 & = x_0z^2 - \theta \left(\alpha^3\beta^4\gamma^{7}e^2t_1\right)^2\left(\alpha^2\beta\gamma^3 t_1s_0\right)\zeta\\
  &= x_0z^2 -\theta x_1^2 u_1 z
\end{align*}
and for the right hand side
\begin{align*}
& \phantom{=}\;\; \alpha^3\beta^9\gamma^{17}e^4\left(cs_0^3+cet_0k'_{11}s_0s_1^4+t_0(c^2e^3t_0l'_{16}+\tau t_1^{17})s_1^6\right) \\
&= \left(\alpha^2\beta^6\gamma^{13}e^3s_0\right)\left(\alpha\beta^3\gamma^9es_0^2\right) +\tau \left(\alpha^3\beta^4\gamma^7e^2t_1\right)^2\left(\alpha^9\beta^2\gamma t_1^5\right)^3 \\
&\phantom{=}\;\;+  \left(\alpha^3\beta^9\gamma^{17}e^5\right)
\left( s_1^2\left(ct_0k'_{11}s_0s_1^2+t_0^2c^{2}e^2l'_{16}s_1^4\right)\right)\\
& = u_0t + \tau x_1^2 w^3 + x_0 \tilde P\end{align*}
Setting the two expressions equal and rearranging the terms we get a relation of the form 
\[x_0P+x_1^2(\theta u_1z+\tau w^3)+u_0t =0.\]
Note that the monomial $u_0t$ corresponds to the term $cs_0^3$ from \eqref{eqn!Ytilde}, and most of the terms (including $x_0z^2$ itself) are wrapped up in the general element $P$ of degree 10.

We can repeat this procedure to get the remaining relations, but it is easier to derive them from $R_{11}$. Using $R_1$ and $R_4$ we have: $x_0:x_1=x_1^2:y=u_0:u_1$. Thus writing $R_{12}=\tfrac{x_1}{x_0}R_{11}$ and applying these ratios gives the claimed relation.

The relation $tP=\dots $ always eliminates $g$, because $g$ corresponds to the term $cs_0^3$ in \eqref{eqn!Ytilde}, which always appears with nonzero coefficient.
\end{proof}

\begin{rmk}\label{rmk!index-1-cover} Recall the following characterisation of $T$-singularities via their index 1 canonical covers:
\[\frac1{dn^2}(1,dna-1)\cong(xy-z^{dn})\subset\frac1n(1_x,-1_y,a_z),\]
see e.g.~\cite[Prop.~3.10]{ksb88} or \cite[Ex.~2.1.8]{hacking-barcelona}.
\end{rmk}

\begin{cor}\label{cor!index-5-format} The 14 relations defining $X' \subset\PP(1,1,2,3,4,4,5,7)$ fit into a Pfaffian presentation as follows:
\[\Pf_4 M=MV=0\]
where
\[
M=\begin{pmatrix}
0&0&x_0 & x_1^2 & u_0 \\
&0&x_1 & y & u_1 \\
&&u_0 & x_1u_1 & t\\
&&&t&-(\tau w^3+\theta u_1z)\\
-\text{sym}&&&&P
\end{pmatrix},\
V=\begin{pmatrix}
0\\-u_1^2\\ 0 \\w\\-y\\0\\
\end{pmatrix}, 
\]
and we omit the diagonal zero entries of the $6\times 6$ skew matrix $M$. 
For general choices of $P_{10}$ and  $\tau$ the singular locus of $X'$ is one point of type $\frac1{25}(1,14)$. 
\end{cor}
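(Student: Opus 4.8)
The plan is to verify first that the $14$ listed relations (the $10$ binomials $R_1,\dots,R_{10}$ together with $R_{11},\dots,R_{14}$) are exactly the $4\times 4$ Pfaffians of $M$ and the entries of $MV$, and then to analyze the singularities of the resulting variety $X'$. For the first part I would simply expand: the Pfaffians $\Pf_4 M$ of the $6\times 6$ skew matrix are indexed by the $\binom{6}{4}=15$ quadruples of rows/columns, so one obtains $15$ equations, and $MV=0$ gives $6$ more; after discarding the trivial ones and those that are consequences of the rest, these should reduce to the $14$ relations. Rather than grinding through all $15$ Pfaffians, I would organize the check around the structure visible in $M$: the first two rows have the shape $(0,0,x_0,x_1^2,u_0)$ and $(0,0,x_1,y,u_1)$, and crossing these with later rows reproduces the binomials $R_1$--$R_{10}$ (these express the rank-$\le 1$ condition on the $2\times 3$ block $\left(\begin{smallmatrix}x_0&x_1^2&u_0\\x_1&y&u_1\end{smallmatrix}\right)$ together with its interactions with $t$); the Pfaffians involving the last two rows, where the entries $-(\tau w^3+\theta u_1 z)$ and $P$ appear, produce $R_{11}$--$R_{14}$, matching the derivation in Lemma~\ref{lem!rolling}. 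One must check the syzygy vector $V=(0,-u_1^2,0,w,-y,0)^t$ is compatible, i.e. $MV=0$ as an identity modulo nothing — this is the Buchsbaum–Eisenbud format for codimension-$3$ Gorenstein ideals with an extra unprojection-type syzygy, and the entries of $V$ are forced by the ratios $x_0:x_1=x_1^2:y=u_0:u_1$ and $x_1u_1^2=yt$, $yu_1^2=wt$ already recorded in $R_9,R_{10}$. This bookkeeping is routine but lengthy; I expect it to be the most error-prone rather than the most conceptually difficult step.

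For the singularity statement, I would work on the affine cone or directly chart-by-chart on $X'\subset\PP(1,1,2,3,4,4,5,7)$. The key point is that $X'$ is obtained, up to the contraction $f$, from $\tilde Y$, and by construction $\tilde Y$ is smooth for general parameters (Lemma~\ref{lem: elliptic surface}: the special fiber is nodal when $\theta,\tau\neq0$, and the double cover is then smooth), while the map $f$ contracts precisely the string $A+B+C$ of type $[3,5,2]$ to a single $\frac1{25}(1,14)$ point. Hence for general $P_{10}$ and $\tau$ the only singularity of $X'$ should be that cyclic quotient point. Concretely, I would (i) identify the point of $X'$ that is the image of the contracted string — it is the coordinate point where $x_0=x_1=y=w=u_0=u_1=t=0$, i.e. the point $[0:\dots:0:z:0]$ on the $z$-axis, the "cone over the degree $5$ generator $z$" mentioned before Lemma~\ref{lem!rolling}; (ii) check via the index-$1$ cover description of Remark~\ref{rmk!index-1-cover} that $X'$ has the expected $\frac1{25}(1,14)$ singularity there, which amounts to verifying that after passing to the canonical cover the equations define $(xy-z^{5})$ in $\frac15(1,-1,a)$ with $a\equiv 3\pmod 5$; and (iii) show every other point is smooth by a Jacobian-rank computation in each of the standard affine charts $\{x_0\neq0\}$, $\{x_1\neq0\}$, etc., using genericity of $P$ (which contains $z^2+\dots$) and $\tau\neq 0$ to rule out the degenerations ($\tau=0$ giving the $I_2$/$A_1$ fiber, $\theta=0$ the cuspidal case) that would introduce extra singular points — these are exactly the loci flagged in the remark after Lemma~\ref{lem!Ytilde}.

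The main obstacle I anticipate is step (iii): proving that the Jacobian of the $14$ relations has the right rank everywhere away from the quotient point. The ambient weighted projective space is $7$-dimensional and $X'$ is a surface, so one needs corank $5$, but the relations are highly redundant (a codimension-$3$ Gorenstein ideal generated by $14$ forms is far from a complete intersection), so naive Jacobian estimates are delicate; the clean way around this is to avoid the brute-force Jacobian entirely and instead argue that $f\colon\tilde Y\to X'$ (equivalently $\epsilon$ composed with the contraction) is an isomorphism away from $A\cup B\cup C$, which is immediate from the construction in Example~\ref{ex: RU} and Lemma~\ref{lem!Ytilde}, together with smoothness of $\tilde Y$ for general parameters. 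In other words, I would deduce the singularity statement geometrically from the already-established fact that $X'$ is the canonical model of the RU-surface $X$ (once Corollary~\ref{cor!is-canonical-ring} is in hand), reserving the explicit chart computation only as a cross-check at the distinguished point $[0:\dots:z:0]$. The genericity hypotheses on $P_{10}$ and $\tau$ then enter precisely to guarantee we are in the nodal (smooth $\tilde Y$) case rather than one of the listed degenerate ones.
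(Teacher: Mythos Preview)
Your verification of the Pfaffian format matches the paper's approach: one simply expands $\Pf_4 M$ and $MV$ and compares with the list of relations $R_1,\dots,R_{14}$.

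For the singularity statement, however, your proposed geometric shortcut is circular. You want to deduce smoothness of $X'$ away from the $\frac1{25}(1,14)$ point from smoothness of $\tilde Y$ via the contraction $f$, saying this follows ``once Corollary~\ref{cor!is-canonical-ring} is in hand''. But Corollary~\ref{cor!is-canonical-ring} is proved \emph{after} the present statement, and its proof explicitly invokes regularity in codimension~$1$ ``which follows from Corollary~\ref{cor!index-5-format}'' in order to apply Serre's criterion and conclude $X'\cong X$. At this stage all you have is the inclusion $R'\subseteq R(X,K_X)$, hence only a rational map $X\dashrightarrow X'$; you do not yet know $X'$ is normal, nor that this map is an isomorphism anywhere.

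The paper instead carries out the chart-by-chart computation you were hoping to avoid, but it is much lighter than you feared: the binomial relations force containments such as $U_y\subset U_{x_1}\cap U_w$ (from $R_3$), $U_{x_1}\subset U_{x_0}\cap U_y$ (from $R_1$), $U_{u_0}\subset U_{x_0}\cap U_t$ (from $R_7$), so only the four charts $U_{x_0},U_w,U_{u_1},U_t$ need checking. In each of these, the binomials eliminate all but a few variables and one is left with a single hypersurface coming from one of $R_{11},\dots,R_{14}$, whose smoothness is then immediate from genericity of $P$. Near $P_z$ the analysis is essentially what you outlined: use $R_{11},\dots,R_{14}$ to eliminate $x_0,x_1,y,u_0$, substitute $y=u_1^3+\cdots$ from $R_{13}$ into $R_{10}$, and obtain $u_1^5-wt+\text{h.o.t.}=0$ in $\frac15(3_w,4_{u_1},2_t)\cong\frac15(1,3,4)$, which is the $\frac1{25}(1,14)$ singularity by Remark~\ref{rmk!index-1-cover}.
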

\begin{proof}Write out the matrix product and the Pfaffians and compare this with the list of relations.

For the singular locus, $X'$ has a covering by orbifold affine charts $U_m\colon(m\ne0)$ centred at each coordinate point $P_m$ of the ambient space. Despite the number of equations, it is straightforward to show the nonsingularity (except $U_z$) of each chart. Here are a couple of sample computations. The chart $U_y$ is contained in $U_{x_1}\cap U_{w}$ because $R_3|_{y=1}$ gives $x_1w=1$, thus we ignore $U_y$. Infact, similar considerations show that $U_{x_1}=U_y\subset U_{x_0}\cap U_w$ and $U_{u_0}\subset U_{x_0}\cap U_t$. Hence we only need to check nonsingularity of four charts $U_{x_0}$, $U_w$, $U_{u_1}$ and $U_t$. For $U_{x_0}$, we use $R_1,R_2,R_4,R_7$ to eliminate $y,w,u_1,t$ respectively, giving a hypersurface in $\CC^4_{x_1,u_0,z}$ induced by any one of $R_{11},\dots,R_{14}$. It is then easy to show that this chart is nonsingular. The other three charts work in a similar way.

Since we know that the other charts of $X'$ are nonsingular, we only need to consider the orbifold chart $U_z$ in an analytic neighbourhood of $P_z\in X$. 
We use $R_{11},R_{12},R_{13},R_{14}$ to eliminate $x_0,x_1,y,u_0$ respectively. Thus the local coordinates near $P_z$ are $w,u_1,t$ and $R_{10}$ defines $X'$ locally 
as the hypersurface $u_1^5-wt+\text{h.o.t}=0$ in $\frac15(3_w,4_{u_1},2_t)\overset{\cdot2}{\cong}\frac15(1,3,4)$ 
after substituting $y=u_1^3+\text{h.o.t.}$ using $R_{13}$. By Remark \ref{rmk!index-1-cover}, this is a $\frac1{25}(1,14)$ singularity. 
\end{proof}

\begin{cor}\label{cor!is-canonical-ring}
The coordinate ring $R'$ of $X'\subset\PP(1,1,2,3,4,4,5,7)$ described in Corollary \ref{cor!index-5-format} is the canonical ring $R(X,K_X)$ and $X'\cong X$  is an RU-surface of nodal type if $\theta \neq 0$ and of cuspidal type if $\theta =0$.
\end{cor}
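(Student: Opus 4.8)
The statement has three parts --- $R'=R(X,K_X)$, $X'\isom X$, and the nodal/cuspidal dichotomy --- and the plan is to get the first by a Hilbert series comparison, the second as an immediate consequence, and the third by tracing the parameter $\theta$. Since $R'$ is by definition the image of $\iota$ we have $R'\subseteq R(X,K_X)$, so it suffices to prove $\dim_\CC R'_n=h^0(X,nK_X)$ for every $n\ge0$.

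For the target side I would compute that $R(X,K_X)$ has Hilbert series $h(t)=\tfrac{1-t^{10}}{(1-t)^2(1-t^2)(1-t^5)}=1+2t+4t^2+6t^3+9t^4+13t^5+\cdots$. Indeed $h^0(\Oh_X)=1$ and $h^0(K_X)=p_g=2$, and for $n\ge2$ the vanishing $H^i(X,nK_X)=0$ $(i>0)$ on stable surfaces gives $h^0(nK_X)=\chi(nK_X)$, while orbifold Riemann--Roch yields $\chi(nK_X)=\chi(\Oh_X)+\binom n2K_X^2=3+\binom n2$: the $\tfrac1{25}(1,14)$ point contributes nothing since it is a T-singularity (equivalently, $\chi(nK)$ is preserved under its $\QQ$-Gorenstein smoothing). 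Here I use the invariants $K_X^2=1$, $\chi(\Oh_X)=3$ of an RU-surface from \cite{FPRR}.

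For the source side, by Lemma~\ref{lem!generators} and Lemma~\ref{lem!rolling} (which eliminates the degree-$17$ generator $g$) the ring $R'$ is generated by $x_0,x_1,y,w,u_0,u_1,z,t$ in degrees $1,1,2,3,4,4,5,7$, with the $14$ relations of Corollary~\ref{cor!index-5-format} assembled into the Pfaffian presentation $\Pf_4 M=MV=0$. This presentation supplies a self-dual graded free resolution of $R'$ over $\CC[x_0,x_1,y,w,u_0,u_1,z,t]$, hence its Hilbert series, which one checks equals $h(t)$. Since $R'_n\subseteq R(X,K_X)_n$ and both have dimension the $n$-th coefficient of $h(t)$, we conclude $R'=R(X,K_X)$: thus $\iota$ is surjective (the statement promised before Lemma~\ref{lem!generators}) and $X'=\Proj R'\isom\Proj R(X,K_X)=X$. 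A geometric alternative to this step: the generators define a morphism $X\to X'$ pulling $\Oh_{X'}(1)$ back to the ample class $K_X$, so it is birational, hence finite, hence an isomorphism onto the normal variety $X'$.

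Finally, under $X'\isom X$ the surface is the one of Example~\ref{ex: RU}, which is of nodal, resp.\ cuspidal, type according as the special fibre $F_1$ of the elliptic surface is nodal, resp.\ cuspidal; by Lemma~\ref{lem: elliptic surface} this happens exactly when $\theta\ne0$, resp.\ $\theta=0$, and this is the same $\theta$ occurring in the equation \eqref{eqn!Ytilde} of $\tilde Y$ and carried through Lemma~\ref{lem!rolling} into $M$. The step I expect to be the main obstacle is the source-side Hilbert series: one must identify the graded free resolution attached to $\Pf_4 M=MV=0$ for this codimension-$5$ Gorenstein ring --- several $4\times4$ Pfaffians vanish identically because the top-left $3\times3$ block of $M$ is zero, and the syzygies $MV=0$ partly duplicate Pfaffian relations, so the Betti numbers are not the naive ones --- and then check the resulting series is precisely $h(t)$. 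By contrast the target-side orbifold Riemann--Roch, with the T-singularity contributing trivially, is routine.
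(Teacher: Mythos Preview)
Your main argument is correct and runs parallel to, but is organised differently from, the paper's. Both rest on the minimal free resolution of $R'$ over the ambient polynomial ring, which the paper obtains by a computer calculation (listing the twists $L_1$, $L_2$ explicitly). You would read off the Hilbert series of $R'$ from this resolution and match it against $h(t)$ directly. The paper instead uses the resolution structurally: Auslander--Buchsbaum gives Cohen--Macaulayness of $R'$, Serre's criterion (together with the codimension-one regularity from Corollary~\ref{cor!index-5-format}) gives projective normality of $X'$, and the last twist in the resolution yields $\omega_{X'}=\Oh_{X'}(1)$. The invariants $p_g(X')=2$ and $K_{X'}^2=1$ are then extracted from $h^0(\Oh_{X'}(1))=2$ and $h^0(\Oh_{X'}(2))=4$ via Blache's Riemann--Roch, and the conclusion $R'=R(X,K_X)$ follows for the same reason as in your approach. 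Your route is more direct; the paper's buys the extra structural facts (Cohen--Macaulayness, identification of the dualising sheaf) along the way, and in particular confirms independently that $X'$ is an I-surface rather than importing the invariants of $X$ as input.

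One gap: in your geometric alternative the step ``so it is birational'' is not justified. From $\phi^*\Oh_{X'}(1)\cong K_X$ ample you get that $\phi$ contracts no curves, hence is finite; birationality then amounts to $\deg\phi=1$, i.e.\ $(\Oh_{X'}(1))^2=K_X^2$, which is exactly the Hilbert-series or resolution input the alternative was meant to bypass. (The logical order ``birational, hence finite'' is also backwards: a birational morphism of normal surfaces can contract curves.) This does not affect your main line, which stands.
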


\begin{proof}A standard computer calculation shows that the minimal free resolution of $\Oh_{X'}$ as an $\Oh_\PP$-module is
\[0\to\Oh(-28)\to\Oh(-28)\otimes\mathcal{L}_1^\vee\to\Oh(-28)\otimes\mathcal{L}_2^\vee\to\mathcal{L}_2\to\mathcal{L}_1\to\Oh\to\Oh_{X'},\]
where
$\mathcal{L}_1=\bigoplus_{d\in L_1}\Oh(-d)$ with \[L_1=(3,4^2,5,6,7,8^2,9,10,11^2,12,14)
 \]
  and 
$\mathcal{L}_2=\bigoplus_{d\in L_2}\Oh(-d)$ with 
\[ L_2=(5,6,7^2,8^3,9^2,10^3,11^4,12^3,13^4,14^3,15^3,16^2,17,18^2,19).\]

Thus the coordinate ring $R'$ is Cohen--Macaulay by the Aus\-lander--Buchsbaum formula \cite[\S1.3]{BH}. Combining this with regularity in codimension 1, which follows from Corollary \ref{cor!index-5-format}, we see that $X'$ is projectively normal by Serre's criterion for normality. The dualising sheaf of $X'$ is 
$\omega_{X'}=\mathcal{E}xt^5_{\Oh}(\Oh_{X'},\omega_{\PP})=\Oh_{X'}
(28-1-1-2-3-4-4-5-7)=\Oh_{X'}(1)$ where 
$\mathcal{E}xt^5_{\Oh}(\Oh_{X'},\Oh)=\Oh(28)$ can be read off from the 
resolution above \cite[\S3.6]{BH}.  By projective normality, we obtain 
$H^0(X,nK_X)\cong H^0(X',\Oh_{X'}(n))$.
Hence we have $X'\cong X$,   $p_g(X)=h^0(\Oh_{X'}(1))=2$, and $K_{X}^2=1$ follows from $P_2(X)=h^0(\Oh_{X'}(2))=4$ and the Riemann--Roch formula of Blache \cite{bla95a}.
\end{proof}
\begin{rmk} The vanishing of $\theta$ corresponds to a cuspidal singular fiber as explained above. The vanishing of $\tau$ imposes an extra $\frac1{9}(1,5)$ point on $X'$ (these are two independent conditions in moduli).
\end{rmk}

\begin{rmk}\label{rem: fixed part} As promised in Remark \ref{rem: fixed part prelim}, we describe the fixed part $F$ of $|K_X|$. Indeed, $e$ divides all generators except $w,u_1,z$. Thus the image of $E$ is obtained by restricting the relation $R_{13}$ to the locus $x_0=x
_1=y=u_0=t=0$:
\[F\colon \left(w(\theta u_1z+\tau w^3)+u_1^3=0\right)\subset\PP(3_w,4_{u_1},5_z).\]
For general $\theta,\tau$, the curve $F$ passes through the index 5 point and has a node there.
If $\theta=0$ then $F$ is a cone with vertex $P_z$. If $\tau=0$ then $F$ has two components each passing through $P_w$ and $P_z$. If $\theta=\tau=0$ then $F$ is the triple line joining $P_w$ and $P_z$.

\end{rmk}

\subsection{A hypersurface in weighted projective space}\label{sect: RU hypersurface}

We give an alternative description of the general RU-surface, which is algebraically much simpler. The following result is inspired by the observation that $\tilde {\mathcal F}$ is birational to $\PP(1,3,17,25)$, as can be read of from the last row of the weight matrix \eqref{eq: weight matrix}. The decomposition of this map into extremal contractions is briefly described in Figure \ref{fig: toric-map}, where the red loci denote exceptional locus of each factor (for more details on how to compute this, see \cite[Chapter 15]{CLS}). Indeed, modulo flips, the birational map is a composition of three divisorial contractions $D_{s_1}$, $D_c$ and $D_{t_0}$ whose restrictions to $\tilde Y$ contract the T-chain $A+B+C$.

\begin{figure}
\begin{center}
\begin{tikzpicture}
[curve/.style ={thick, every loop/.style={looseness=10, min distance=30}},
exceptional/.style = {Orange},
scale = 0.3
]
\begin{scope}[xshift=-24.5cm]
\draw [curve] (3,0) to node[above,pos=0.95]{$\scriptstyle{s_0}$}  (0,5);
\draw [curve] (0,5) to node[above,left,pos=0.95]{$\scriptstyle{\zeta}$} (-3,0);
\draw [curve] (-3,0)  to node[above,right,pos=0.95]{$\scriptstyle{s_1}$} (3,0);
\draw [curve] (-3,0)  to node[above,right,pos=1]{} (1,1.25);
\draw [curve] (3,0)  to node{} (1,1.25);
\draw [curve] (0,5)  to node{} (1,1.25);

\draw [curve,color=red] (-3,0)  to node[above,left,pos=1]{} (-0,2.2);
\draw [curve] (0,5)  to node{} (-0,2.2);
\draw [curve] (1,1.25)  to node{} (-0,2.2);

\draw [curve] (-3,0)  to node[above,right,pos=1]{} (-0.7,1.25);
\draw [curve] (-0,2.2)  to node{} (-0.7,1.25);
\draw [curve] (1,1.25)  to node{} (-0.7,1.25);
\draw [curve,dashed,->] (3,3) to node[above]  {\tiny flip} node[below] {\small} (5,3);
\end{scope}

\begin{scope}[xshift=-16.5cm]
\draw [curve] (3,0) to node[above,pos=1]{}  (0,5);
\draw [curve] (0,5) to node[above,left,pos=1]{} (-3,0);
\draw [curve] (-3,0)  to node[above,right,pos=1]{} (1,1.25);
\draw [curve] (3,0)  to node{} (1,1.25);
\draw [curve] (0,5)  to node{} (1,1.25);
\draw [curve] (-3,0)  to node[color=red,pos=1]{$\bullet$} (3,0);

\draw [curve] (0,5)  to node[above,left,pos=1]{} (-0,2.2);
\draw [curve] (0,5)  to node{} (-0.7,1.25);
\draw [curve] (1,1.25)  to node{} (-0,2.2);

\draw [curve] (-3,0)  to node[above,left,pos=1.3]{} (-0.7,1.25);
\draw [curve] (-0,2.2)  to node{} (-0.7,1.25);
\draw [curve] (1,1.25)  to node{} (-0.7,1.25);
\draw [curve,->] (3,3) to node[above]  {\tiny contr.} node[below] {$\scriptstyle{D_{s_1}}$} (5,3);
\end{scope}

\begin{scope}[xshift=-9cm]
\draw [curve] (0,5) to node[above,left,pos=1]{} (-3,0);
\draw [curve,color=red] (-3,0)  to node[color=black,above,right,pos=0.95]{$\scriptstyle{t_0}$} (1,1.25);
\draw [curve] (0,5)  to node{} (1,1.25);

\draw [curve] (0,5)  to node[above,left,pos=1]{} (-0,2.2);
\draw [curve] (0,5)  to node{} (-0.7,1.25);
\draw [curve] (1,1.25)  to node{} (-0,2.2);

\draw [curve] (-3,0)  to node[above,left,pos=1.3]{} (-0.7,1.25);
\draw [curve] (-0,2.2)  to node{} (-0.7,1.25);
\draw [curve] (1,1.25)  to node{} (-0.7,1.25);
\draw [curve,dashed,->] (1.5,3) to node[above]  {\tiny flip} node[below] {\small } (3,3);
\end{scope}

\begin{scope}[xshift=-2cm]
\draw [curve] (0,5) to node[above,left,pos=0.85]{$\scriptstyle{\zeta}$} (-3,3);
\draw [curve] (0,5)  to node{} (1,1.25);

\draw [curve] (0,5)  to node{} (-0.7,1.25);
\draw [curve] (1,1.25)  to node{} (-0,2.2);

\draw [curve] (-3,3)  to node[above,left,pos=1.3]{$\scriptstyle{e}$} (-0.7,1.25);
\draw [curve] (-0,2.2)  to node{} (-0.7,1.25);
\draw [curve] (0,5)  to node[color=red,pos=1]{$\bullet$} (-0,2.2);

\draw [curve] (1,1.25)  to node{} (-0.7,1.25);
\draw [curve,->] (1.5,3) to node[above]  {\tiny contr.} node[below] {$\scriptstyle{D_c}$} (3,3);
\end{scope}

\begin{scope}[xshift=5.25cm]
\draw [curve] (0,5) to node[above,left,pos=0.85]{$\scriptstyle{\zeta}$} (-3,3);

\draw [curve] (0,5)  to node{} (-0.7,1.25);

\draw [curve] (-3,3)  to node[above,left,pos=1.3]{$\scriptstyle{e}$} (-0.7,1.25);
\draw [curve] (1,1.25)  to node{} (-0.7,1.25);
\draw [curve] (0,5)  to node[color=red,pos=1]{$\bullet$} (1,1.25);

\draw [curve,->] (1.5,3) to node[above]  {\tiny contr.} node[below] {$\scriptstyle{D_{t_0}}$} (3,3);
\end{scope}

\begin{scope}[xshift=12.5cm]
\draw [curve] (0,5) to node[above,left,pos=0.85]{$\scriptstyle{\zeta}$} (-3,3);

\draw [curve] (-0.7,1.25) to node[above,pos=0.95]{$\scriptstyle{s_0}$} (0,5);

\draw [curve] (-3,3)  to node[above,left,pos=1.3]{$\scriptstyle{e}$} (-0.7,1.25);
\end{scope}
\end{tikzpicture}
\end{center}
\caption{A birational map from $\tilde{\mathcal{F}}$ to $\PP(1,3,17,25)$. The origin and $v_{t_1}$ are behind the page.}\label{fig: toric-map}
\end{figure}
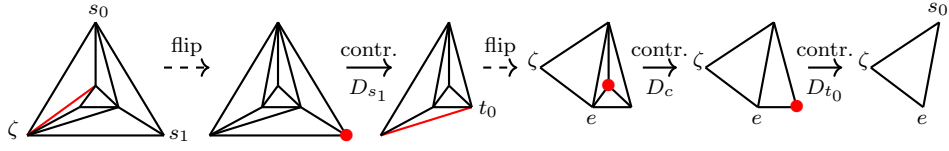

\begin{prop}
A general quasismooth hyper\-surface $S$ of weighted 
degree $51$ in $\PP(1,3,17,25)$ is an RU-surface (here quasismooth means that the affine cone is smooth outside of the vertex).
\end{prop}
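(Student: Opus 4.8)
The plan is to show that the generic weighted hypersurface $S\subset\PP(1,3,17,25)$ of degree $51$ is an I-surface lying in the closure of $\gothM_{\text{RU}}$, by checking the numerical invariants and the singularity type, and then identifying $S$ with an RU-surface via the birational map of Figure \ref{fig: toric-map}. First I would compute the invariants directly from the hypersurface presentation: the canonical sheaf of $S$ is $\omega_S=\Oh_S(51-1-3-17-25)=\Oh_S(5)$, so $K_S$ is ample and we must understand the pluricanonical ring $R(S,K_S)=\bigoplus_n H^0(S,\Oh_S(5n))$. From the Hilbert series of the hypersurface one reads off $p_g(S)=h^0(\Oh_S(5))=2$ (spanned by the monomials of degree $5$ in the weight-$1$ and weight-$3$ variables), $\chi(\Oh_S)=3$, and $K_S^2=\tfrac{51}{1\cdot3\cdot17\cdot25}\cdot 5^2=1$; together with $q=0$ (which follows from Cohen--Macaulayness of the graded ring, as the hypersurface is normal and hence $H^1(\Oh_S)=0$) this shows $S$ is an I-surface once one also checks that its canonical model has only T-singularities.

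Next I would pin down the singularities of $S$. A general quasismooth hypersurface is quasismooth in the stacky sense, so the only singularities of $S$ come from where it meets the singular locus of $\PP(1,3,17,25)$. The weight-$3$ coordinate gives a curve of $\ZZ/3$-quotient singularities, the weight-$17$ coordinate an isolated $\ZZ/17$ point, and the weight-$25$ coordinate an isolated $\ZZ/25$ point; a dimension count on the degree-$51$ equation shows that a general $S$ avoids the $\ZZ/3$-curve and the $\ZZ/17$-point (there are monomials of degree $51$ in each of those variables alone), but must pass through the $\ZZ/25$-point $P_\zeta$, and one computes the local type there to be $\tfrac1{25}(1,14)$ exactly as in the proof of Corollary \ref{cor!index-5-format} --- after quasismoothing the local equation has the form $u_1^5-wt+\cdots=0$ in $\tfrac15(1,3,4)$, whose index-one cover is $xy=z^{5}$, i.e. the $\tfrac1{25}(1,14)$ singularity by Remark \ref{rmk!index-1-cover}. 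So $S$ has a unique T-singularity of the expected type, hence $S$ is an I-surface lying in the RU-stratum.

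The cleanest way to finish is to match $R(S,K_S)$ against the canonical ring of the RU-surface computed above. Pushing forward $\Oh_S(n)$ along the birational contraction and using the description of $\tilde{\mathcal F}\dashrightarrow\PP(1,3,17,25)$ in Figure \ref{fig: toric-map}, the three weight-$1,3,17,25$ coordinates pull back to (scalar multiples of) $x_0, w, g$-type sections; equivalently, one shows that the subring of $R(X,K_X)$ generated by suitable sections realises $X$ as this hypersurface. Concretely: the generators $w$ (degree $3$), some combination in degree $17$ and $z$-related data should give, after eliminating the redundant generators via the relations $R_1,\dots,R_{14}$ of Lemma \ref{lem!rolling}, a single equation of degree $51$ in a $\PP(1,3,17,25)$; alternatively, one checks that a general degree-$51$ hypersurface deforms within its family to a member whose canonical model is exactly the $X'$ of Corollary \ref{cor!index-5-format}, so by irreducibility of the parameter spaces the general $S$ is an RU-surface. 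Either way one invokes Corollary \ref{cor!is-canonical-ring} to conclude.

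**The main obstacle** I expect is the last identification step: verifying that the ``algebraically much simpler'' hypersurface really has the same canonical ring as the $14$-relation Pfaffian format, rather than merely the same numerical invariants and singularity type. One must rule out that the hypersurface family is a different component of the moduli space with coincidentally equal invariants; the safe route is to produce an explicit birational identification of $\Proj R(S,K_S)$ with $X'$ (tracking how the weight-$51$ equation arises from $R_{11},\dots,R_{14}$ under elimination of generators), or to exhibit $S$ as a flat degeneration/deformation inside the known RU-family and appeal to the irreducibility statement from \cite[Prop.~3.13]{FPRR}. The invariant and singularity computations themselves are routine weighted-projective bookkeeping.
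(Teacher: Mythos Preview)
Your invariant and singularity computations are essentially correct, and after them the paper's proof is one line: once $S$ is an I-surface whose only non-canonical singularity is a $\tfrac1{25}(1,14)$ point, it is an RU-surface by the classification theorem \cite[Thm.~3.2~(A1)]{rana-urzua19}. You never invoke this result, and instead spend your third paragraph trying to match $R(S,K_S)$ against the Pfaffian format directly. That approach is valid---indeed the paper carries it out in a separate proposition a few lines later---but it is a detour, not the proof of the statement at hand. The ``main obstacle'' you flag (ruling out a coincidental second component) dissolves once you remember that Rana--Urz\'ua have already classified all I-surfaces with this singularity.

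Two small corrections to your singularity discussion. First, the weights $1,3,17,25$ are pairwise coprime, so the singular locus of $\PP(1,3,17,25)$ consists of three isolated coordinate points, not a curve of $\ZZ/3$-quotient singularities as you wrote; the argument that a general $S$ avoids the index~$3$ and index~$17$ points (via the monomials $t_1^{17}$ and $s_0^3$) is nonetheless correct. Second, your local analysis at the index~$25$ point imports the coordinates $w,u_1,t$ and the equation $u_1^5-wt+\cdots$ from Corollary~\ref{cor!index-5-format}, which lives in the $\PP(1,1,2,3,4,4,5,7)$ model. For the hypersurface the computation is more direct: at $P_\zeta$ one sets $\zeta=1$, uses the term $e\zeta^2$ in $P_{50}$ to eliminate $e$, and reads off a quasismooth $\tfrac1{25}(3,17)\cong\tfrac1{25}(1,14)$ point immediately.
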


\begin{proof}
By adjunction we have $K_S = \Oh_S(51 - 25 - 17 - 3 - 1) = \Oh_S(5)$ which has two global sections and
$K_S^2 = (51\cdot5^2) / (1\cdot3\cdot17\cdot25) = 1$.

Write $e,t_1,s_0,\zeta$ for the coordinates on the weighted projective space then the equation of degree 51
can be expressed as
\begin{equation} \label{eq: RU hypersurface} 
eP_{50} + \tau t_1^{17} + \theta t_1^3s_0\zeta+ s_0^3 
\end{equation}
where $P_{50}(e,t_1,s_0,\zeta)$ is a polynomial of weighted degree 50 and $\theta$, $\tau$ are 
parameters. Note that there are only three forms of degree 51 that are not 
divisible by $e$. It is easy to check by hand that $S$ is quasismooth.

Now $S$ contains the index 5  coordinate point $(0,0,0,1)$ and we get a $\frac1{25}(3,17) \cong \frac1{25}(1,14)$ singularity there because since $P_{50}$ contains the monomial $\zeta^2$.
If $\tau$ is nonzero then $S$ does not contain the singular point of index 3.
We assume that the coefficient of $s_0^3$ is nonzero $(=1)$ to avoid the index 17 
point.

 By \cite[Theorem 3.2 (A1)]{rana-urzua19} the surface constructed is an RU-surface. 
\end{proof}

\begin{rmk}
In coordinates, the map $\tilde{\mathcal{F}}\dashrightarrow\PP(1,3,17,25)$ is given by:
\[(s_1,t_0,c,e,t_1,s_0,\zeta)\mapsto(1,1,1,e,t_1,s_0,\zeta),\]
and this maps the equation \eqref{eqn!Ytilde} defining $\tilde Y$, to the equation \eqref{eq: RU hypersurface} defining $S_{51}$. The degree $51$ can also be read off from the multidegree $(6,18,34,51)$ of $\tilde Y$ with respect to the weight matrix \eqref{eq: weight matrix}. Thus we have a generically 1-1 map between the moduli spaces parametrising 
hypersurfaces $\tilde Y\subset\tilde{ \mathcal{F}}$ and hypersurfaces $S_{51}\subset\PP(1,3,17,25)$.
 In particular, the general element in $\gothM_{RU}$ can be realised as such a hypersurface $S_{51}$.
 
 This agrees with the naive parameter count. In $\PP(1,3,17,25)$, the linear 
system $\PP(H^0(\Oh(51)))$ has dimension $50$ and the automorphism group has 
dimension $22$, which suggests that quasismooth hypersurfaces of weighted  
degree $51$ have $28$ moduli.
\end{rmk}

We now show that the parameters occuring in \eqref{eq: RU hypersurface} play the same role as in Section \ref{sec: canonical-ring}. 
\begin{prop}
The RU-hypersurface with $\tau = 0$ (respectively $\tau=\theta=0$) has an additional $\frac19(1,5)$ (resp.~$\frac1{18}(1,5)$) singularity.
\end{prop}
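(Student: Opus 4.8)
The plan is to show the extra singularity appears at the coordinate point $P := (0,1,0,0)\in\PP(1,3,17,25)$ and to pin it down via Remark \ref{rmk!index-1-cover}. Since $t_1$ has weight $3$, the point $P$ lies in an orbifold chart $\CC^3_{e,s_0,\zeta}/\mu_3$ on which $\mu_3$ acts with weights $(1,17,25)\equiv(1,-1,1)\pmod 3$. Setting $t_1=1$ in \eqref{eq: RU hypersurface} shows that near $P$ the surface $S$ is $\{f=0\}/\mu_3$ with $f = e\,g + \tau + \theta\,s_0\zeta + s_0^3$ and $g:=P_{50}(e,1,s_0,\zeta)$; this vanishes at the origin iff $\tau=0$, recovering the previous proposition, so assume $\tau=0$. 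Weighted-homogeneity of $P_{50}$ forces $g = a_1 s_0 + a_2 e^2 + a_3 e\zeta + a_4\zeta^2 + (\text{terms of degree}\ge 3)$, where $a_1,\dots,a_4$ are, up to scalars, the coefficients in $P_{50}$ of $s_0 t_1^{11}$, $e^2 t_1^{16}$, $e\zeta\, t_1^{8}$ and $\zeta^2$, hence generically nonzero; in particular $f$ has no linear term, so $P$ is a genuine singular point of $S$, and the Hessian of $f$ at $P$ equals $s_0(a_1 e+\theta\zeta)$ (or $a_1 e s_0$ when $\theta=0$), of rank $2$. Thus $\{f=0\}\subset\CC^3$ is a corank-$1$ hypersurface singularity, i.e.\ an $A_\mu$, and the whole task reduces to computing $\mu$ together with a $\mu_3$-equivariant normal form $\{xy-z^{\mu+1}=0\}$ and reading off the $\mu_3$-weights of $x,y,z$.

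For the nodal case $\theta\ne 0$ I would give $(e,s_0,\zeta)$ the filtration weights $(1,2,1)$. The lowest-weight (degree $3$) part of $f$ is then $f_3 = s_0(a_1 e + \theta\zeta) + e\,(a_2 e^2 + a_3 e\zeta + a_4\zeta^2)$, while $f-f_3$ has weight $\ge 6$; a short Jacobian computation (using that the coefficients are general) shows $f_3$ has an isolated singularity at $0$, so $f$ is right-equivalent to $f_3$ and $\mu=(3/1-1)(3/2-1)(3/1-1)=2$. A $\mu_3$-equivariant change of the $s_0$- and $\zeta$-coordinates (replace $\zeta$ by $L:=a_1 e+\theta\zeta$, then absorb the $L$-divisible cubics into the product $s_0 L$) brings $f_3$ to the form $L\hat s + (\text{unit})\,e^3$, that is $\{xy-z^3=0\}$ with $(x,y,z)=(L,\hat s,e)$ of $\mu_3$-weights $(1,-1,1)$. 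By Remark \ref{rmk!index-1-cover} with $n=3$, $dn=3$ (so $d=1$) and $a=1$, this germ is $\tfrac19(1,2)\cong\tfrac19(1,5)$ (the isomorphism since $5\equiv 2^{-1}\pmod 9$), so $S$ acquires a $\tfrac19(1,5)$ point at $P$.

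For the cuspidal case $\theta=0$ the naive degree-$3$ part $e(a_1 s_0+\cdots)$ of $f=e g+s_0^3$ is non-reduced, so the weighting $(1,2,1)$ is useless; instead I would use the filtration weights $(4,2,1)$ on $(e,s_0,\zeta)$. The lowest-weight (weight $6$) part of $f$ is then $f_6 = a_1 e s_0 + a_4 e\zeta^2 + s_0^3$, while $f-f_6$ has weight $\ge 9$; again a one-line Jacobian check gives an isolated singularity, hence $f$ is right-equivalent to $f_6$ and $\mu=(6/4-1)(6/2-1)(6/1-1)=5$. The $\mu_3$-equivariant substitutions $\hat s := a_1 s_0 + a_4\zeta^2$ and $\hat e := e + \tfrac1{a_1^3}(\hat s^2-3a_4\hat s\zeta^2+3a_4^2\zeta^4)$ reduce $f_6$ to $\hat s\,\hat e - (\text{unit})\,\zeta^6$, that is $\{xy-z^6=0\}$ with $(x,y,z)=(\hat e,\hat s,\zeta)$ of $\mu_3$-weights $(1,-1,1)$. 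By Remark \ref{rmk!index-1-cover} with $n=3$, $dn=6$ (so $d=2$) and $a=1$, this germ is $\tfrac1{18}(1,5)$, so $S$ acquires a $\tfrac1{18}(1,5)$ point at $P$.

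Finally I would check that nothing else changes: for general $S$ with $\tau=0$ (resp.\ $\tau=\theta=0$) the $\tfrac1{25}(1,14)$ point at $(0,0,0,1)$ persists unaltered, the index-$17$ point $(0,0,1,0)$ is still avoided because the coefficient of $s_0^3$ is nonzero, and $S$ is quasismooth elsewhere by the same kind of check as in the previous proposition; hence $S$ gains exactly one new singularity. The main obstacle is the cuspidal case: in both cases the Hessian of $f$ already has rank $2$, so one is always looking at an $A_\mu$, but the ``obvious'' weighting $(1,2,1)$ computes the Milnor number incorrectly when $\theta=0$, and one must either use the weighting $(4,2,1)$ or carry out the Milnor-ideal computation carefully (there $s_0\equiv O(\zeta^2)$ and $e\equiv O(\zeta^4)$ modulo the Jacobian ideal, whence $\partial_\zeta f\equiv O(\zeta^5)$) in order to see the $A_5$, and thereby $\tfrac1{18}(1,5)$, rather than the $A_2$ of the nodal case.
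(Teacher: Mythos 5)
Your proof is correct and follows essentially the same route as the paper: localize at the index-$3$ coordinate point $(0,1,0,0)$, write the orbifold-local equation, reduce it to the normal form $xy-z^{3}$ (resp.\ $xy-z^{6}$) and invoke Remark \ref{rmk!index-1-cover} to identify $\frac19(1,5)$ (resp.\ $\frac1{18}(1,5)$). The paper simply asserts the local analytic forms $(\theta s_0\zeta+e^3+\text{h.o.t.})$ and $(es_0+e\zeta^2+\dots+\text{h.o.t.})$ under genericity of $P_{50}$, whereas you justify the same reduction via filtration weights, Milnor numbers and explicit $\mu_3$-equivariant coordinate changes; this is a more careful version of the identical argument, not a different one.
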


\begin{proof}
Near the index 3 point $(0,1,0,0)$ the local analytic form of $S$ is
\[(\theta s_0\zeta + e^3 + \text{h.o.t.})=0 \subset \frac13(1_e,2_{s_0},1_{\zeta}),\]
if we assume that the monomial $e^2t_1^{18}$ appears in $P_{50}$. By Remark \ref{rmk!index-1-cover}, this is a $\frac 19(1,5)$ singularity.

Moreover, since $t_1^{11}s_0$, $\zeta^2$, and $et_1^8\zeta$ appear in $P_{50}$, if $\theta=0$ we get $(es_0+e\zeta^2+e^2\zeta+\text{h.o.t.}=0)\subset\frac13(1,2,1)$ which is a $\frac1{18}(1,5)$ singularity. 
\end{proof}

\begin{prop}
The canonical model of an RU-hypersurface is the same as the one described in Corollary \ref{cor!index-5-format}.
\end{prop}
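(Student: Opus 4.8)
The plan is to show that the RU-hypersurface $S_{51}\subset\PP(1,3,17,25)$ of equation \eqref{eq: RU hypersurface} and the surface $X'\subset\PP(1,1,2,3,4,4,5,7)$ of Corollary \ref{cor!index-5-format} have the same canonical ring, since both are normal surfaces with at worst T-singularities and ample canonical class (an RU-surface), so the isomorphism of canonical models is equivalent to an isomorphism of canonical rings $R(S,K_S)\cong R(X',K_{X'})$. First I would compute the canonical ring $R(S,K_S)=\bigoplus_n H^0(S,\Oh_S(5n))$ directly. Adjunction gives $K_S=\Oh_S(5)$, so the ring is generated by pluricanonical forms, i.e.\ by elements of $H^0(\PP(1,3,17,25),\Oh(5n))$ restricted to $S$, modulo the degree $51$ relation. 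A weighted Hilbert series / Hilbert basis computation, exactly analogous to the proof of Lemma \ref{lem!generators}, identifies the generators of this subring inside the Cox ring $\CC[e,t_1,s_0,\zeta]$: the monomials of weighted degree $\equiv 0 \pmod 5$ lying in the cone generated by $5$, which after dividing weights by $5$ become generators in degrees $1,1,2,3,4,4,5,7$ matching $x_0,x_1,y,w,u_0,u_1,z,t$ — and here one sees the geometric meaning via the birational map $\tilde{\mathcal F}\dashrightarrow\PP(1,3,17,25)$ of Figure \ref{fig: toric-map}, which sends the generators of Lemma \ref{lem!generators} (set $s_1=t_0=c=1$) precisely to monomials in $e,t_1,s_0,\zeta$.

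Next I would match the relations. The birational map sends equation \eqref{eqn!Ytilde} to equation \eqref{eq: RU hypersurface}; one multiplies \eqref{eq: RU hypersurface} by the same "excess monomials" $x_0/z^2$, $x_1/z^2$, $y/z^2$, $u_0/z^2$, $t/z^2$ used in the proof of Lemma \ref{lem!rolling} (i.e.\ by the appropriate monomials making the product a genuine element of the subring) to recover the five rolling-factor relations $R_{11},\dots,R_{14}$ and the relation $tP=\dots+g$ eliminating $g$; the ten binomial relations $R_1,\dots,R_{10}$ are automatic from the monomial parametrization of the cone. Thus the image of $\CC[e,t_1,s_0,\zeta]/(\text{eq }\eqref{eq: RU hypersurface})$ under the map sending coordinates to the listed monomials is exactly the ring $R'$ of Corollary \ref{cor!index-5-format} with the same parameters $\theta,\tau$ and the same general $P_{10}$.

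Finally I would argue that this ring map $R(S,K_S)\to R'$ is an isomorphism. Surjectivity is clear since both rings are generated by the matching sets of generators. For injectivity it suffices to compare Hilbert functions: by Corollary \ref{cor!is-canonical-ring}, $R'=R(X,K_X)$ has the Hilbert series of the canonical ring of an I-surface, and the Hilbert series of $R(S,K_S)$ is computed from the quasismooth hypersurface $S_{51}\subset\PP(1,3,17,25)$ as $\frac{1-t^{51}}{(1-t)(1-t^3)(1-t^{17})(1-t^{25})}$ reindexed by $K_S=\Oh(5)$, and these agree. Alternatively, and more conceptually, both $X'$ and $S$ are RU-surfaces by Corollary \ref{cor!is-canonical-ring} and the previous Proposition, hence each has ample canonical divisor and its own canonical model is recovered as $\Proj$ of its canonical ring; since the two canonical rings are related by the surjection above and have equal Hilbert series, the surjection is an isomorphism and $\Proj R' \cong \Proj R(S,K_S)$, i.e.\ the two canonical models coincide. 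The main obstacle I anticipate is purely bookkeeping: tracking the fractional gradings and excess monomials carefully enough to see that the $\theta$- and $\tau$-terms of \eqref{eq: RU hypersurface} land in the $R_{11},\dots,R_{14}$ with the coefficients asserted in Corollary \ref{cor!index-5-format}, rather than any genuine geometric difficulty.
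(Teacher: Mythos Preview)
Your approach is essentially the same as the paper's: it also writes down generators of the Veronese subring $\bigoplus_n H^0(S,\Oh_S(5n))\subset\CC[e,t_1,s_0,\zeta]/(\text{eq.}~\ref{eq: RU hypersurface})$ as explicit monomials $x_0=e^5$, $x_1=e^2t_1$, $y=et_1^3$, $w=t_1^5$, $u_0=e^3s_0$, $u_1=t_1s_0$, $z=\zeta$, $t=es_0^2$, $g=s_0^5$, observes that the binomial relations $R_1,\dots,R_{10}$ are immediate, and then multiplies the degree-$51$ equation by the five concrete excess monomials $e^4,\,et_1,\,e^3t_1^2,\,t_1^3,\,s_0^2$ to obtain the rolling-factor relations and to eliminate $g$. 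The paper does not bother with your final Hilbert-series comparison; once the generators and relations coincide with those of Corollary~\ref{cor!index-5-format} the identification of canonical models is immediate, so that step, while not wrong, is superfluous.
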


\begin{proof}
As shown above, $K_S=\Oh_S(5)$ so we can write out generators and relations for the canonical ring
directly:
\begin{gather*}
x_0 = e^5,\ x_1 = e^2t_1,\ y = et_1^3,\ w = t_1^5,\ u_0 = e^3s_0,\ u_1 = t_1s_0,\\
z = \zeta,\ t = es_0^2,\ g = s_0^5
\end{gather*}
in degrees $1,1,2,3,4,4,5,7,17$ respectively.

The easy monomial relations between these generators are the same as in Section \ref{sec!relations}
and the equation of degree 51 can be expressed in terms of these new 
generators in five different ways by multiplying it with each of $e^4, et_1, e^3t_1^2, t_1^3, s_0^2$
(cf.~Lemma \ref{lem!rolling}). The last of these five equations involves $s_0^5$ and therefore we can use it to eliminate the spurious 
generator $g$ of degree 17 in the same way as Lemma \ref{lem!rolling}.

Finally, we can fit these relations into the skew-matrix format of Corollary \ref{cor!index-5-format}.
\end{proof}

\begin{rmk}
After an appropriate coordinate change, $P_{50}$ involves only even powers of $\zeta$. Thus the general RU-hypersurface has an involution $(e,t_1,s_0,\zeta)\mapsto(e,t_1,s_0,-\zeta)$ if and only if $\theta$ vanishes.
This is another interpretation of the obstruction to $\QQ$-Gorenstein smoothing of the general RU-surface cf.~\cite[Prop.~3.18]{FPRR}. 
\end{rmk}

\section{Cuspidal RU-surfaces are \texorpdfstring{$\QQ$}{Q}-Gorenstein smoothable}\label{sec: smoothable}

 In this section we assume that $\theta=0$, that is, we consider the cuspidal RU-surfaces. We exhibit a $\QQ$-Gorenstein smoothing of the general cuspidal RU-surface. 
 Since the relations $R_{11},\dots,R_{14}$ are only determined modulo $R_1,\dots,R_{10}$, we use $R_3$ to rewrite $R_{14}$ as \[R_{14}\colon u_0P+\tau y^2w^2u_1+t^2=0.\] 
 The new relation no longer fits into the previous Pfaffian format. This choice of $R_{14}$ is crucial in finding the $\QQ$-Gorenstein smoothing:

\begin{prop}\label{prop!q-gor-index-5}
Consider the family $\mathcal{X}/\Lambda$ defined by relations
\begin{align*}
\tilde R_1\colon x_0y-x_1^3+\lambda^3\tau w &=0 & \tilde R_2\colon x_0w-x_1^2y+\lambda u_0&=0 \\
\tilde R_3\colon x_1w-y^2+\lambda u_1&=0 & \tilde R_4\colon x_0u_1-x_1u_0+\lambda^2\tau yw&=0 \\
\tilde R_5\colon x_1^2u_1-yu_0+\lambda^2\tau w^2&=0 & \tilde R_6\colon x_1yu_1-wu_0-\lambda t&=0 \\
\tilde R_7\colon x_0t-u_0^2 +\lambda\tau x_1y^2w &=0 & \tilde R_8\colon x_1t-u_0u_1+\lambda \tau yw^2&=0\\
\tilde R_9\colon  x_1u_1^2-yt-\lambda\tau w^3&=0 & \tilde R_{10}\colon yu_1^2-wt+\lambda \tilde P&=0
\end{align*}
\begin{align*}
\tilde R_{11}\colon x_0\tilde P+\tau x_1^2w^3+u_0t -\lambda^2\tau wu_1^2&=0\\
\tilde R_{12}\colon x_1\tilde P+\tau yw^3+u_1t&=0\\
\tilde R_{13}\colon y\tilde P+\tau w^4+u_1^3&=0\\
\tilde R_{14}\colon u_0\tilde P+\tau y^2w^2u_1+t^2&=0
\end{align*}
where $\lambda$ is the coordinate on $0\in\Lambda\subset \CC$ and $\tilde P$ is a general polynomial of degree 10 satisfying $\tilde P|_{\lambda=0}=P$. If $\tau\ne0$ then the central fiber $\mathcal{X}_0$ is a cuspidal Rana--Urz{\'u}a surface with a single $\frac1{25}(1,14)$ point, and $\mathcal{X}/\Lambda$ is a $\QQ$-Gorenstein smoothing of $\mathcal{X}_0$. 
\end{prop}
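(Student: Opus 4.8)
\emph{Overview.} The plan is to check the three properties of a $\QQ$-Gorenstein smoothing in turn: that $\mathcal{X}\to\Lambda$ is flat with central fibre the cuspidal RU-surface, that $\omega_{\mathcal{X}/\Lambda}$ is $\QQ$-Cartier, and that the general fibre is smooth. The basic bookkeeping remark is that $\lambda$ (like the parameter $\tau$) has weight $0$ for the grading on $\PP(1,1,2,3,4,4,5,7)$, so every $\tilde R_i$ is homogeneous of the same weighted degree as $R_i$.

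\emph{Central fibre and flatness.} Setting $\lambda=0$ in $\tilde R_1,\dots,\tilde R_{14}$ returns $R_1,\dots,R_{13}$ together with the rewriting $u_0P+\tau y^2w^2u_1+t^2=0$ of $R_{14}$, all specialised to $\theta=0$; hence $\mathcal{X}_0$ is the canonical model $X'$ of the cuspidal RU-surface, and since $\tau\ne0$ it has a single $\frac1{25}(1,14)$ point by Corollaries~\ref{cor!index-5-format} and~\ref{cor!is-canonical-ring}. To see that $\mathcal{X}\subset\PP(1,1,2,3,4,4,5,7)\times\Lambda$ is flat over $\Lambda$, I would verify, by a computer computation parallel to the one in Corollary~\ref{cor!is-canonical-ring}, that the minimal free resolution of $\Oh_{X'}$ lifts to a length-$5$ free resolution of $\Oh_{\mathcal{X}}$ over $\Oh_{\PP}[\lambda]$ with the same twists; equivalently, that the Hilbert series of $\mathcal{X}_\lambda$ is independent of $\lambda$. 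Then $\mathcal{X}$ is Cohen--Macaulay of the expected codimension and flatness over the smooth curve $\Lambda$ follows.

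\emph{$\QQ$-Gorenstein property and smoothness away from $P_z$.} Since the resolution and all its twists are unchanged, the $\mathcal{E}xt$-computation of Corollary~\ref{cor!is-canonical-ring} gives $\omega_{\mathcal{X}/\Lambda}\cong\Oh_{\mathcal{X}}(1)$; this is invertible away from the orbifold point $P_z$ and $5$-Cartier at $P_z$, so $\omega_{\mathcal{X}/\Lambda}$ is $\QQ$-Cartier and restricts to $\omega_{\mathcal{X}_0}$ on the central fibre (and $\mathcal{X}$ is klt near $\mathcal{X}_0$, as $\mathcal{X}_0$ carries only T-singularities). Moreover $\mathcal{X}_0\setminus\{P_z\}$ is smooth, so $\mathcal{X}\to\Lambda$ is smooth on a neighbourhood of $\mathcal{X}_0\setminus\{P_z\}$, and hence $\mathcal{X}_\lambda$ is smooth outside a small neighbourhood of $P_z$ once $\lambda$ is close to $0$.

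\emph{Local model at $P_z$, and the main difficulty.} It remains to analyse $\mathcal{X}$ near $P_z$, exactly as in the proof of Corollary~\ref{cor!index-5-format}: in the orbifold chart $U_z$ the relations $\tilde R_{11},\tilde R_{12},\tilde R_{13},\tilde R_{14}$ eliminate $x_0,x_1,y,u_0$ analytically — here $\tilde P$ restricts to a unit because it contains the monomial $z^2$ — and feeding $y=u_1^3+\text{h.o.t.}$ (from $\tilde R_{13}$) into $\tilde R_{10}$ exhibits $\mathcal{X}$ locally as
\[
u_1^5-wt+\lambda\cdot(\text{unit})+\text{h.o.t.}=0\subset\tfrac15(3_w,4_{u_1},2_t)\overset{\cdot2}{\cong}\tfrac15(1,3,4),
\]
with local coordinates $w,u_1,t$. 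By Remark~\ref{rmk!index-1-cover} this is the standard $\mu_5$-equivariant smoothing $(xy=z^5+\lambda)/\mu_5$ of the $\frac1{25}(1,14)$ singularity (equivariant since $\lambda$ has weight $0$); for $\lambda\ne0$ it is smooth upstairs and $\mu_5$ acts freely off the origin, so $\mathcal{X}_\lambda$ is smooth near $P_z$ as well. Combined with the previous paragraph, $\mathcal{X}_\lambda$ is smooth for general $\lambda$, and $\mathcal{X}/\Lambda$ is a $\QQ$-Gorenstein smoothing of $\mathcal{X}_0$. I expect the main obstacle to be the flatness verification, together with the claim that the coefficient of $\lambda$ in the local equation at $P_z$ really is a unit — that is, that the deformation is governed there by $\tilde R_{10}$ and not by an obstructed combination of the relations. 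This is precisely why $R_{14}$ had to be rewritten using $R_3$ before deforming: only in that presentation do the $\tilde R_i$ close up into a flat family carrying the hyperquotient smoothing in the $\lambda$-direction, and I would confirm both points by the explicit resolution/Gr\"obner-basis computation indicated above.
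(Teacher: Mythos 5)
Your identification of the central fibre and your local analysis at $P_z$ coincide with the paper's: the authors likewise set $z=1$, use $\tilde R_{11},\dots,\tilde R_{14}$ to eliminate $x_0,x_1,y,u_0$ (possible because $\tilde P$ contains $z^2$), and read off the local model $u_1^5-wt+\lambda+\text{h.o.t.}=0$ in $\frac15(1,3,4)\times\Lambda$, invoking the same characterisation of $\QQ$-Gorenstein smoothings of $\frac1{25}(1,14)$. Where you genuinely diverge is in establishing flatness and smoothness of the general fibre. The paper does both at once via Lemma \ref{lem!index-5-format}: the family is shown to fit into two auxiliary Pfaffian formats, and for $\lambda\ne0$ explicit row and column operations reduce the fourteen relations to $\tilde R_1,\tilde R_2,\tilde R_3,\tilde R_6$ (which eliminate $w,u_0,u_1,t$) plus one residual equation, exhibiting $\mathcal{X}_\lambda$ as a nonsingular hypersurface of weighted degree $10$ in $\PP(1,1,2,5)$; flatness then follows since every fibre is a surface. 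Your route --- flatness via a lifted free resolution and constant Hilbert series, then smoothness of $\mathcal{X}_\lambda$ by openness of the smooth locus of the morphism together with properness, localised at $P_z$ by the hyperquotient model --- is sound and arguably more conceptual, and your explicit computation of $\omega_{\mathcal{X}/\Lambda}\cong\Oh_{\mathcal{X}}(1)$ makes the $\QQ$-Gorenstein claim away from $P_z$ more transparent than the paper does. What you lose is the global description of the general fibre as a degree-$10$ hypersurface, which the paper reuses immediately afterwards in the corollary relating $\overline\gothM_{1,3}$ to a GIT space of such hypersurfaces. The one place your write-up is not self-contained is the flatness verification, which you defer to an unperformed computer calculation; you correctly single this out, together with the unit coefficient of $\lambda$ at $P_z$, as the crux, and the rewriting of $R_{14}$ via $R_3$ is indeed exactly what makes the relations close up into a flat family, as the paper's matrix formats make explicit.
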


\begin{proof}
By construction, the fiber over $\lambda=0$ is a Rana--Urz\'ua surface because the relations match those of \S\ref{sec!relations}. By Lemma \ref{lem!index-5-format} below, the general fiber with $\lambda\ne0$ is a smooth surface. Hence $\mathcal{X}/\Lambda$ is flat.

The $\QQ$-Gorenstein condition is only relevant near the singular point of $\mathcal{X}$ at $P_z$ over $\lambda=0$. Substituting $z=1$ into the  equations $\tilde R_{10}$, $\tilde R_{11}$, $\tilde R_{12}$, $\tilde R_{13}$, $\tilde R_{14}$ allows us to eliminate $x_0,x_1,y,u_0$ respectively in the same way as the proof of Corollary \ref{cor!index-5-format}, so that we are left with $(u_1^5-wt+\lambda+\text{h.o.t.}=0)$ in $\frac15(1,3,4)\times\Lambda$. Thus the family $\mathcal{X}/\Lambda$ induces a $\QQ$-Gorenstein smoothing of the $\frac1{25}(1,14)$ point on the fiber over $\lambda=0$ (see e.g.~\cite[Ex.~2.1.8]{hacking-barcelona}). 
\end{proof}

\begin{lem}\label{lem!index-5-format}
The family $\mathcal{X}/\Lambda$ fits into the matrix format \[\Pf_4 M_1=\Pf_4 M_2=M_1V_1=M_2V_2=0,\] where
\begin{gather*}
M_1=\begin{pmatrix}
\lambda&x_1 & w & u_1 \\
&u_0 & yu_1 & t\\
&&t&-\tau yw^2 \\
-\text{sym}&&&\tilde P
\end{pmatrix},\ \  M_2=\begin{pmatrix}
\lambda&x_1 & y & t \\
&y & w & u_1^2 \\
&&-u_1&\tau w^3 \\
-\text{sym}&&&-\tilde P
\end{pmatrix}, \\
V_1={}^t\begin{pmatrix}
-\lambda\tau y^2w, & u_0, & -x_1y, & x_0, & 0
\end{pmatrix},\\
V_2={}^t\begin{pmatrix}
u_0, & -\lambda^2\tau w, & x_1^2, & -x_0, & 0
\end{pmatrix}. 
\end{gather*}If $\lambda\ne0$ then the fiber $\mathcal{X}_{\lambda}$ is isomorphic to a nonsingular hypersurface of weighted degree 10 in $\PP(1,1,2,5)$.
\end{lem}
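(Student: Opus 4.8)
The plan is to treat the two assertions in turn: first verify the matrix format, which is essentially bookkeeping, and then identify the general fibre by an explicit elimination.

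For the format I would proceed exactly as in the proof of Corollary~\ref{cor!index-5-format}: expand the five $4\times4$ sub-Pfaffians of the $5\times5$ skew matrix $M_1$ and the five entries of $M_1V_1$, do the same for $M_2$ and $M_2V_2$, and match the output against $\tilde R_1,\dots,\tilde R_{14}$. Since the relations are only determined modulo the syzygies among $\tilde R_1,\dots,\tilde R_{10}$, there is freedom to rewrite as needed; the only delicate point is keeping track of the powers of $\lambda$ and the signs carried by the $\lambda$-entries of $M_1,M_2,V_1,V_2$. The two formats share several of the $\tilde R_i$ (all those not involving $\tilde P$), and together they account for all fourteen equations.

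For the general fibre, fix $\lambda\neq0$ and recall $\tau\neq0$ from Proposition~\ref{prop!q-gor-index-5}. The relations $\tilde R_1,\tilde R_2,\tilde R_3,\tilde R_6$ are, successively, linear in $w$, in $u_0$, in $u_1$ and in $t$, so I would use them to solve
\[w=\tfrac{x_1^3-x_0y}{\lambda^3\tau},\qquad u_0=\tfrac{x_1^2y-x_0w}{\lambda},\qquad u_1=\tfrac{y^2-x_1w}{\lambda},\qquad t=\tfrac{x_1yu_1-wu_0}{\lambda},\]
expressing $w,u_0,u_1,t$ as polynomials in $x_0,x_1,y$. The coordinates that survive are $x_0,x_1$ of weight $1$, $y$ of weight $2$ and $z$ of weight $5$, namely those of $\PP(1,1,2,5)$. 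Substituting the formulas into $\tilde R_{10}$ — the unique relation of weighted degree $10$ — produces a single equation of the shape
\[F\colon\ \lambda z^2+G(x_0,x_1,y)=0,\]
with $G$ of weighted degree $10$; here $z$ enters $\tilde P$ only through $z^2$ (there is no further weight-$5$ generator), so the coefficient of $z^2$ in $F$ is exactly $\lambda\neq0$. One then checks that the remaining nine relations $\tilde R_4,\tilde R_5,\tilde R_7,\tilde R_8,\tilde R_9,\tilde R_{11},\dots,\tilde R_{14}$ already lie in the ideal generated by $\tilde R_1,\tilde R_2,\tilde R_3,\tilde R_6,\tilde R_{10}$; granting this, the projection $\PP(1,1,2,3,4,4,5,7)\dashrightarrow\PP(1,1,2,5)$ that forgets $w,u_0,u_1,t$ restricts to a closed immersion of $\mathcal{X}_\lambda$ onto $\{F=0\}$, with inverse given by the formulas above, so $\mathcal{X}_\lambda$ is a hypersurface of weighted degree $10$ in $\PP(1,1,2,5)$.

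Finally I would establish nonsingularity. Since the coefficient of $z^2$ is $\lambda\neq0$, the surface avoids the index-$5$ point $[0{:}0{:}0{:}1]$; and a short computation with the pure powers of $y$ shows the coefficient of $y^5$ in $G$ is nonzero for general $\tilde P$ — indeed $yu_1^2$ alone contributes $\lambda^{-2}y^5$, while $wt$ contributes no pure power of $y$ — so $\{F=0\}$ also avoids the index-$2$ point $[0{:}0{:}1{:}0]$. Off these two points $\PP(1,1,2,5)$ is smooth, and there $\mathcal{X}_\lambda$ is the double cover of the quadric cone $\PP(1,1,2)$ branched over the quintic section $\{G=0\}$ (which misses the vertex), i.e.\ a classical I-surface in the sense of the introduction; for general $\tilde P$ the form $G$ is general of its degree, so $\{G=0\}$ is smooth by Bertini and hence so is $\mathcal{X}_\lambda$. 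The hard part is the step I have glossed over: that after the elimination the nine leftover relations are genuinely generated by the single equation $F$, so that $\mathcal{X}_\lambda$ has no extra components or embedded structure in $\PP(1,1,2,5)$. I expect this to come out structurally from the matrix format of the first part rather than from a brute Gröbner-basis computation, and it is also what underpins the flatness of $\mathcal{X}/\Lambda$ invoked in the proof of Proposition~\ref{prop!q-gor-index-5}.
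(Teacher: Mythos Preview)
Your approach is essentially the paper's: verify the format by expansion, then for $\lambda\neq0$ eliminate $w,u_0,u_1,t$ via $\tilde R_1,\tilde R_2,\tilde R_3,\tilde R_6$ and identify the fibre with the degree-$10$ hypersurface cut out by $\tilde R_{10}$ in $\PP(1,1,2,5)$. Your intuition about the ``hard part'' is exactly right and the paper carries it out just as you predict: rather than checking all nine leftover relations directly, one performs simultaneous row/column operations on $M_i$ (preserving skew-symmetry) together with the complementary row operations on $V_i$ (so that $M_iV_i$ is preserved), clearing the first row and column of each $M_i$ against the entry $\lambda$; the resulting matrices $M_i',V_i'$ show that the ideal is already generated by $\tilde R_1,\tilde R_2,\tilde R_3,\tilde R_6,\tilde R_8,\tilde R_9,\tilde R_{10},y\tilde R_4$, and then only three relations $\tilde R_8,\tilde R_9,y\tilde R_4$ need to be checked to vanish identically after the elimination --- the paper exhibits the computation for $\tilde R_4$ as a sample.

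One small correction to the format check: the products $M_iV_i$ do not return the $\tilde R_j$ on the nose. For instance $M_1V_1$ yields $\tilde R_2$, $y\tilde R_4$, $\tilde R_7$, $y\tilde R_8$, and $\tilde R_{11}+\tau w(x_1w-\lambda u_1)\tilde R_3$, while $M_2V_2$ yields $\tilde R_1,\tilde R_2,\tilde R_4,\tilde R_5,\tilde R_{11}$; it is the two formats \emph{together} that generate the ideal, and some entries appear only as multiples or as combinations modulo earlier relations. Your nonsingularity argument is a slightly more explicit version of the paper's one-line appeal to generality of $\tilde P$.
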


\begin{proof}
Up to sign, the Pfaffians of $M_1$ are $\tilde R_{10}$$, $ $\tilde R_{14}$, $ \tilde R_{12}$, $ \tilde R_6$, $\tilde R_8$ and the Pfaffians of $M_2$ are $\tilde R_{10}$, $ \tilde R_{13}$, $ \tilde R_{12}$, $ \tilde R_3$, $ \tilde R_9$. The product $M_1V_1$ gives $\tilde R_2$, $ y\tilde R_4$, $ \tilde R_7$$, $ $y\tilde R_8$, $\tilde R_{11}+\tau w(x_1w-\lambda u_1)\tilde R_3$ and $M_2V_2$ gives $\tilde R_1$, $\tilde R_2$, $\tilde R_4$, $\tilde R_5$, $\tilde R_{11}$. Thus taken all together these generate the ideal defining $\mathcal{X}/\Lambda$. 

Assume now that $\lambda\ne0$. We perform row and column operations on $M_i$ preserving antisymmetry, and apply the complementary row operations to $V_i$ so that the products $M_iV_i$ are preserved. This gives new matrices $M_i'$ and $V_i'$:
\begin{gather*}
M'_1=\begin{pmatrix}
\lambda&0&0&0 \\
&0&0&0\\
&&\frac{\tilde R_6}{\lambda}&\frac{\tilde R_8}{\lambda} \\
-\text{sym}&&&\frac{\tilde R_{10}}{\lambda}
\end{pmatrix}, \ \ M'_2=\begin{pmatrix}
\lambda&0&0&0 \\
&0&0&0 \\
&&\frac{\tilde R_3}{\lambda}&\frac{\tilde R_9}{\lambda} \\
-\text{sym}&&&\frac{\tilde R_{10}}{\lambda}
\end{pmatrix}\\
V'_1={}^t\begin{pmatrix}
\frac{y\tilde R_4}{\lambda}, & \frac{\tilde R_2}{\lambda}, & -x_1y, & x_0, & 0
\end{pmatrix},\ \
V'_2={}^t\begin{pmatrix}
\frac{\tilde R_2}{\lambda}, & \frac{\tilde R_1}{\lambda}, & x_1^2, & -x_0, & 0
\end{pmatrix}. 
\end{gather*}
Thus the format reduces to $\tilde R_1,\tilde R_2, \tilde R_3, \tilde R_6,\tilde R_8,\tilde R_9,\tilde R_{10},y\tilde R_4$.

Moreover, the assumption $\lambda\ne0$ enables us to rewrite $w,u_0,u_1,t$ using relations $\tilde R_1,\tilde R_2, \tilde R_3, \tilde R_6$ respectively:
\begin{align*}
x_0y-x_1^3&=-\lambda^3\tau w & x_0w-x_1^2y&=-\lambda u_0\\
x_1w-y^2&=-\lambda u_1 & x_1yu_1-wu_0&=\lambda t
\end{align*}
Doing this transforms $\tilde R_8,\tilde R_9,y\tilde R_4$ into identities rather than relations.
For example $\tilde R_4$ reduces to the identity:
\begin{align*}
& \phantom{\equiv\; \; } x_0u_1-x_1u_0+\lambda^2\tau yw\\
&\equiv x_0\left(\frac{x_1w-y^2}{-\lambda}\right)-x_1\left(\frac{x_0w-x_1^2y}{-\lambda}\right)+\lambda^2\tau y\left(\frac{x_0y-x_1^3}{-\lambda^3\tau}\right)\\
&\equiv\frac1\lambda\left(-x_0x_1w+x_0y^2+x_1x_0w-x_1^3y-x_0y^2+x_1^3y\right)\\
&\equiv 0.
\end{align*}
The remaining relation is $\tilde R_{10}$. By repeatedly using $\tilde R_1,\tilde R_2, \tilde R_3, \tilde R_6$ to eliminate $w,u_0,u_1,t$ as above, we are left with a relation between generators $x_0,x_1,y,z$, which we display here, multiplied by $\lambda^{11}$ for readability:
\[x_0(x_0y-x_1^3)^3-3\lambda^3x_1^2y(x_0y-x_1^3)^2+
3\lambda^6\tau^{-1}x_1y^3(x_0y-x_1^3)+\lambda^9y^5+\lambda^{12}\tilde P=0.\]
Since $\tilde P$ is general, this is a nonsingular surface.
\end{proof}
The last equation in the proof can be interpreted as follows:
\begin{cor}
Let $\mathcal X\to \Lambda$ be the $1$-parameter smoothing of a cuspidal RU-surface $X_0$ constructed above. Then there is a diagram
  \[ \begin{tikzcd}
 \mathcal X \arrow{dr} \arrow[dashed]{r}{\phi}& \tilde{\mathcal{X}}\arrow{d}\arrow[hookrightarrow]{r}{\deg 10} & \PP(1,1,2,5)\times \Lambda\arrow{dl}\\
 & \Lambda
    \end{tikzcd}
\]
where $\phi$ is birational and an isomorphism outside the central fibers $X_0$ and $\tilde X_0 =  (x_0(x_1^3-x_0y)^3=0)$. 
\end{cor}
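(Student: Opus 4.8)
The diagram is essentially a repackaging of the computation already carried out in the proof of Lemma~\ref{lem!index-5-format}, so the plan is to turn that computation into a birational map over $\Lambda$. Recall that there, under the standing assumptions $\theta=0$ and $\tau\ne0$, one inverts $\lambda$ and uses $\tilde R_1,\tilde R_2,\tilde R_3,\tilde R_6$ to solve for $w,u_0,u_1,t$ as rational functions of $x_0,x_1,y$ (with a power of $\lambda$ in the denominator), after which the only surviving relation becomes the degree-$10$ equation
\[x_0(x_0y-x_1^3)^3-3\lambda^3x_1^2y(x_0y-x_1^3)^2+3\lambda^6\tau^{-1}x_1y^3(x_0y-x_1^3)+\lambda^9y^5+\lambda^{12}\tilde P=0\]
in the coordinates $x_0,x_1,y,z$ of $\PP(1,1,2,5)$. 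First I would \emph{define} $\tilde{\mathcal X}\subset\PP(1,1,2,5)\times\Lambda$ (with $\lambda$ the coordinate on $\Lambda$) to be the hypersurface cut out by this equation. Since $\lambda$ does not divide the left-hand side, $\tilde{\mathcal X}$ is a Cartier divisor flat over $\Lambda$; its fibre over $\lambda\ne0$ is the nonsingular surface of Lemma~\ref{lem!index-5-format}, and its central fibre is obtained by setting $\lambda=0$, namely $(x_0(x_0y-x_1^3)^3=0)=\tilde X_0$. The map $\phi$ is then the rational map over $\Lambda$ given in coordinates by $(x_0,x_1,y,w,u_0,u_1,z,t)\mapsto(x_0,x_1,y,z)$, and commutativity of the triangle with the two projections to $\Lambda$ is automatic since $\phi$ leaves $\lambda$ unchanged.

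Next I would check that $\phi$ restricts to a genuine morphism $\mathcal X\setminus X_0\to\tilde{\mathcal X}$. The only thing to verify is that $x_0,x_1,y$ cannot vanish simultaneously at a point of $\mathcal X$ with $\lambda\ne0$: if they did, $\tilde R_1,\tilde R_2,\tilde R_3,\tilde R_6$ would force $w=u_0=u_1=t=0$ as well, and then $\tilde R_{10}$ would read $\lambda\tilde P=0$ with $\tilde P$ evaluated at a point where only $z$ is nonzero, which is impossible since $\tilde P|_{\lambda=0}=P$ contains $z^2$. Granting this, $[x_0:x_1:y:z]$ is a well-defined point of $\PP(1,1,2,5)$, and it lies on $\tilde{\mathcal X}$ because the displayed equation is a $\lambda$-power multiple of $\tilde R_{10}$ after the substitution. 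Conversely, over $\Lambda\setminus\{0\}$ an inverse to $\phi$ is built by recovering $w,u_0,u_1,t$ from the explicit formulas of Lemma~\ref{lem!index-5-format}: then $\tilde R_1,\tilde R_2,\tilde R_3,\tilde R_6$ hold by construction, $\tilde R_4,\tilde R_5,\tilde R_7,\tilde R_8,\tilde R_9$ and $y\tilde R_4$ reduce to identities exactly as in that proof, $\tilde R_{10}$ is the displayed equation, and $\tilde R_{11},\dots,\tilde R_{14}$ lie in the ideal generated by the previous relations. Hence $\phi|_{\lambda\ne0}$ is an isomorphism, so $\phi$ is birational: $\mathcal X$ is irreducible because it is flat over $\Lambda$ with irreducible general fibre, and $\tilde{\mathcal X}$ is irreducible because $\phi$ identifies a dense open of it with a dense open of $\mathcal X$.

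It then remains to pin down the exceptional loci: by construction $\phi$ restricts to an isomorphism $\mathcal X\setminus X_0\xrightarrow{\ \sim\ }\tilde{\mathcal X}\setminus\tilde X_0$, because $X_0$ is precisely the locus $\lambda=0$ in the source and $\tilde X_0=\tilde{\mathcal X}\cap(\lambda=0)$ in the target; this is exactly the assertion. (The statement does not ask that $\phi$ be \emph{non-}trivial across the central fibres, and indeed the content is entirely the identification $\mathcal X\setminus X_0\cong\tilde{\mathcal X}\setminus\tilde X_0$ together with the explicit description of $\tilde X_0$.) I expect the only step that is not pure bookkeeping to be the indeterminacy analysis of $\phi$ — the non-vanishing of $(x_0,x_1,y)$ over $\lambda\ne0$ established above — while everything else is a direct transcription of Lemma~\ref{lem!index-5-format}.
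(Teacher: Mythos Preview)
Your proposal is correct and follows exactly the line the paper intends: the paper presents this corollary as an immediate reinterpretation of the final displayed equation in the proof of Lemma~\ref{lem!index-5-format}, without a separate proof, and your argument simply spells out that reinterpretation in detail (definition of $\tilde{\mathcal X}$ by the degree-$10$ equation, the projection $\phi$, the inverse over $\lambda\ne0$ via $\tilde R_1,\tilde R_2,\tilde R_3,\tilde R_6$). The indeterminacy check you supply is a helpful addition that the paper leaves implicit.
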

This phenomenon could be taken as a starting point for  a comparison of the closure  of the Gieseker component and a GIT moduli space of hypersurfaces of degree $10$ in $\PP(1,1,2,5)$.

\section{I-surfaces with two T-singularities}\label{sec: two sings}

In this Section, we apply and extend the results of Section  \ref{sec: canonical-ring} to understand how T-divisors in $\overline\gothM_{1, 3}$ intersect each other.
We recall that for  an I-surface $X$ with a unique non-canonical T-singularity $Q$,  the point $Q$ is as in Table \ref{tabular: 1 T-sing} by  \cite[Thm.~1.1]{FPRR}. 
\begin{table}[ht]\caption{T-singularities $\frac{1}{dn^2}(1, dna-1)$ occuring individually} \label{tabular: 1 T-sing}
 \begin{center}
\renewcommand{\arraystretch}{1.2}
   \begin{tabular}{ccll}
   \toprule
Cartier index $n$ & $d$ &   T-singularity& T-string  \\
  \midrule
   $2$ & $d\leq 32$ & $ \frac{1}{4d}(1,2d-1)$  & $[4]$ or $[3,3]$ or $[3, 
2,\dots, 3]$\\
   $3$ & $2$& $ \frac{1}{18}(1,5)$& $[4,3,2]$ \\
   $5$ & $1$ & $\frac{1}{25}(1, 14)$ &$[2,5,3]$\\
  \bottomrule
 \end{tabular}
 \end{center}
 \end{table}
 
Suppose $X$ is an I-surface with two distinct T-singularities  $Q_1$ 
and $Q_2$ of index $i_1$, resp.~$i_2$ (with $i_1\neq i_2$) belonging to the 
above list, where for simplicity of exposition we restrict to the case $d=1$ in the  index $2$ case. 
In particular, these surfaces should correspond to general points in the intersection of  two T-divisors in  $\overline\gothM_{1, 3}$.

We let 
 $f\colon \wt{Y}\to X$ be the minimal desingularization and  
  $\epsilon \colon \wt Y\to Y$ be the morphism to a minimal model: 
 \begin{equation}
\label{diagram}
\begin{tikzcd}
 {} &
  \wt Y \arrow{dl}{f}[swap]{\text{resolution} }  
\arrow{dr}[swap]{\epsilon}{\text{sequence of blow-ups}}   \\
 X& & Y
\end{tikzcd}
\end{equation}
Here $\epsilon$ is  the composition of $k$ blow-ups  at $P_1, 
\dots , P_k$  (possibly infinitely near). We will denote 
the exceptional divisor of $\epsilon$ by $E= \sum_{i=1}^{k} E_i$,
 where the $E_i$'s are the $(-1)$-curves of each blow-up (possibly not 
irreducible, nor reduced). In particular, $K_{\wt{Y}}=\epsilon^*K_Y+E$.

We write 
\[f^*K_X=K_{\tilde Y}+\Delta \]
where $\Delta=\Delta_1+ \Delta_2$ is the codiscrepancy divisor of $f$ with 
$\Delta_j$ supported on $f^{-1}(Q_j)$.  These are chains of smooth rational curves with self-intersections and coefficients as in Table \ref{tab: discrepancies}.

 \begin{table}
  \caption{Codiscrepancy divisors}\label{tab: discrepancies}
  \begin{center} 
\renewcommand{\arraystretch}{1.2}
   \begin{tabular}{cl}
 \toprule 
 T-singularity& $\Delta_j$  \\
  \midrule 
 $ \frac{1}{4}(1,1)$ & $\frac12 A_j $ with $A_j^2=-4$  \\ 
    \hline
   \multirow{2}*{  $ \frac{1}{18}(1,5)$}&  $\frac 23A_j+\frac23B_j+\frac13C_j $  \\ &   with  $ A_j^2=-4,  B_j^2=-3,  C_j^2=-2$ \\ 
     \hline
    \multirow{2}*{$\frac{1}{25}(1, 14)$} &$\frac 35A_j+\frac45B_j+\frac25C_j $  \\ &   with  $  A_j^2=-3,  B_j^2=-5,  C_j^2=-2$\\ 
  \bottomrule 
 \end{tabular}
 \end{center} 
 \end{table}

Since $X$ has only rational singularities then we have  
 $q(Y)=q(\wt Y)=q(X)=0$ and $p_g(Y)=p_g(\wt Y)=p_g(X)=2, $  so the Kodaira dimension of $\wt Y$ is  positive  and the minimal model $Y$ is unique. 
Arguing as in \cite[\S 4]{FPRR} we see that $Y$ is a properly elliptic  surface and 
moreover, by \cite[Prop. 20]{lee98},   we have
 \begin{equation}\label{eq: d-r}K^2_{\wt Y} = 1 + (\Delta_1)^2 +  (\Delta_2)^2 = d_1-r_1 + d_2-r_2 -1
 \end{equation}
where  $r_j$ is the length of the T-string and $d_j$ is as in Table \ref{tabular: 1 T-sing}.

\subsection{Useful results}

 We summarize some results that will be used throughout this section. 
First we recall some well known facts about the structure of the 
$\epsilon$-exceptional divisors.

 \begin{rmk}\label{different Gammas}   
Let $\Gamma$, $\Gamma'$ be two distinct  irreducible $(-1)$-curves on $\wt Y$. Then $\Gamma \cap \Gamma' =\emptyset$ since $\wt Y$ has Kodaira dimension 1. 
 
\end{rmk}

 \begin{rmk}\label{multiplicity}  Let $E= \sum_{i=1}^{k} E_i$ be  the exceptional divisor of $\epsilon$, 
 where the $E_i$'s are the $(-1)$-curves of each blow-up. 
Then every  $E_i$ contains at least one irreducible $(-1)$-curve $\Gamma_i$.   Moreover 
 if $D \subset  E $ is an $\epsilon$-exceptional irreducible  $(-n)$-curve with $n\geq 2$, then
$E-D \geq \sum m_h \Gamma_h$,  with $\Gamma_h$'s irreducible $(-1)$-curves and  $\sum m_h \geq k$.
\end{rmk}

Now we show some   properties  of the components of the  T-strings (i.e. $f$-exceptional divisors) and the $\epsilon$-exceptional divisors.

From now on, we abuse notation and denote the pull back of $K_Y$ to $\wt{Y}$  by $K_Y$, and the pull back of $K_X$ to $\wt{Y}$ by $K_X$. 
With this convention, we can write $$K_X=K_{\wt Y}+ \Delta_1 + \Delta_2 = K_{ Y}+ E  + \Delta_1 + \Delta_2.$$

\begin{rmk}\label{(-1)curves} 
Let $\Gamma \subset  \wt Y$ be an $\epsilon$-exceptional irreducible  $(-1)$-curve. Then $K_X \Gamma >0$. 
\end{rmk}

\begin{lem}\label{lem: -2-curves}Let $D\subset \wt Y $ be an irreducible $(-2)$-curve.

Then $K_Y  D=ED=0$.
In particular:
\begin{enumerate}
\item[\em(i)] if $D\subset \wt Y$ is not contracted by $\epsilon$,  then $\epsilon (D)\subset Y $ is again a $(-2)$-curve and 
$D\cap E = \emptyset$; 

\item[\em(ii)] if $D\subset \wt Y$ is  contracted by $\epsilon$,  then
$D$ intersects only  one $(-1)$-curve $\Gamma \subset E$ and  $D \Gamma =1$.  
\end{enumerate}
\end{lem}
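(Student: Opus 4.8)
The statement to prove is Lemma~\ref{lem: -2-curves}: for an irreducible $(-2)$-curve $D\subset\wt Y$ one has $K_YD=ED=0$, together with the two case-distinctions. The basic principle is that $\wt Y$ has Kodaira dimension $1$, so $K_{\wt Y}$ is nef (its only effective contractions are the rational curves in the T-strings and the $\epsilon$-exceptional curves, but nefness of $K_{\wt Y}$ itself holds after passing to the minimal model $Y$ and adding back $E$). More precisely, $K_{\wt Y}=K_Y+E$ with $K_Y$ nef; since $D$ is an irreducible $(-2)$-curve we have $K_{\wt Y}D = -2 - D^2 - \text{(something)}$ — rather, by adjunction $K_{\wt Y}D + D^2 = -2$, so $K_{\wt Y}D = 0$. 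Hence $K_YD + ED = 0$. Now $K_Y$ is nef so $K_YD\ge 0$, and $E\ge 0$ is effective; if $D$ is not a component of $E$ then $ED\ge 0$, forcing $K_YD = ED = 0$. If $D$ \emph{is} a component of $E$, then $D$ is one of the $(-2)$-curves inside some $E_i$, and one argues directly (using Remark~\ref{multiplicity} and the structure of iterated blow-ups) that $K_YD=0$ still, since $K_Y$ restricted to exceptional curves of $\epsilon$ is trivial; then $ED=0$ follows.

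For part (i): if $D$ is not contracted by $\epsilon$, then from $ED=0$ and $E$ effective we get $D\cap E=\emptyset$ (an irreducible curve meeting an effective divisor it is not a component of in a point would give positive intersection). Since $D$ is disjoint from the $\epsilon$-exceptional locus, $\epsilon|_D$ is an isomorphism onto its image and $\epsilon(D)^2 = D^2 = -2$, while $K_Y\cdot\epsilon(D) = K_YD = 0$; by adjunction on $Y$ this makes $\epsilon(D)$ a smooth rational $(-2)$-curve. For part (ii): if $D$ \emph{is} contracted by $\epsilon$, it is one of the $(-2)$-curves appearing in the exceptional configuration. Here I would use the combinatorial structure of a sequence of blow-ups: the total transform configuration $E=\sum E_i$ is a tree of rational curves, and the $(-1)$-curves $\Gamma\subset E$ are the "last" exceptional curves. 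A $(-2)$-curve $D$ arises from a $(-1)$-curve that has been blown up exactly once more along it and not again; one checks that in the dual graph $D$ is adjacent to exactly one $(-1)$-curve $\Gamma$, with $D\Gamma=1$. The cleanest way is to induct on the number of blow-ups, or to invoke the standard classification of such configurations (e.g. via Remark~\ref{multiplicity}), noting $0 = ED = \sum (\text{local intersection with each }E_i)$ cannot happen unless the positive contributions from $(-1)$-curves cancel the negative self-intersection contribution exactly, pinning down $D\Gamma=1$ for a unique $\Gamma$.

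The main obstacle is the case where $D$ is itself $\epsilon$-exceptional (part (ii)): one must show $K_YD=0$ and $ED=0$ and then extract the precise incidence statement. The subtlety is that $E$ is "possibly not irreducible nor reduced" (as flagged in Remark~\ref{multiplicity}), so multiplicities in $E$ must be tracked carefully; writing $E=\sum_i E_i$ with $E_i$ the $(-1)$-curve of the $i$-th blow-up (each possibly reducible/non-reduced as a \emph{total} transform), the intersection $ED$ must be computed from the blow-up tree. I expect the argument to reduce to: $K_YD=0$ because $D$ lies in a fiber of $\epsilon$ contracted to a point and $K_Y$ is pulled back, hence $K_Y\cdot(\text{anything }\epsilon\text{-contracted})=0$; then $ED = K_{\wt Y}D - K_YD = 0 - 0 = 0$; and finally the incidence $D\Gamma=1$ for a unique $(-1)$-curve $\Gamma$ is forced because $D^2=-2$ while $K_{\wt Y}D=0$ means $D$ meets the rest of the exceptional fiber in total degree $2$, and in a blow-up tree this is realized by meeting exactly one $(-1)$-curve simply (two meetings, or meeting a $(-1)$-curve doubly, would contradict $D$ being a $(-2)$-curve produced by the blow-up process — again a short case-check on the tree structure, possibly citing Remark~\ref{multiplicity} and Remark~\ref{(-1)curves}).
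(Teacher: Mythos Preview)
Your argument for $K_YD=ED=0$ and for part~(i) matches the paper's: adjunction gives $K_{\wt Y}D=0$, then split $K_{\wt Y}=K_Y+E$ with $K_Y$ nef and conclude from the signs (using that $K_Y$ is trivial on $\epsilon$-exceptional curves when $D$ is a component of $E$). This is correct and essentially identical to the published proof.

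For part~(ii) you take a different route. You propose to use the combinatorics of the blow-up tree: a $(-2)$-curve $D$ in the exceptional configuration arose from blowing up exactly one point on a prior exceptional curve, so it has one ``child'' and possibly parents, and parents automatically acquire self-intersection $\le -2$. This line can be made rigorous, but your sketch is imprecise in two places. First, the assertion that $D$ is adjacent to \emph{exactly} one $(-1)$-curve is too strong: in a chain of three successive blow-ups the first exceptional curve is a $(-2)$-curve meeting \emph{no} $(-1)$-curve at all. What the combinatorics (and the paper's proof) actually yields is \emph{at most} one, which is also how the statement is used later. Second, your degree count ``$D$ meets the rest of the exceptional fiber in total degree $2$'' tacitly assumes $D$ has multiplicity $1$ in $E=\sum E_i$, which need not hold since the $E_i$ are total transforms and can be non-reduced.

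The paper bypasses this bookkeeping entirely by arguing via contractions and the global geometry of $Y$: if $D$ met two distinct $(-1)$-curves $\Gamma,\Gamma'$, contracting both makes the image of $D$ a curve with non-negative self-intersection and negative canonical degree, impossible since $\kappa(Y)=1$; and if $D\Gamma\ge 2$, contracting $\Gamma$ already produces a non-exceptional curve with negative canonical degree, impossible on the minimal surface $Y$. This is shorter, needs no tree analysis, and directly exploits that $\wt Y$ has Kodaira dimension $1$---a fact you mention in your plan but never actually use for part~(ii).
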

\begin{proof}
Since $D^2=-2$,  by adjunction we have 
\[0 = K_{\wt{Y}} D = K_Y D +ED\]
and $K_Y D\geq 0$ because $K_Y$ is nef.

If $D$ is not $\epsilon$-exceptional then  the second summand is non-negative, giving $ED = 0 $ and the first item. 

If $D$ is $\epsilon$-exceptional then the first summand is zero, so also the second. 
To conclude the proof of   {\em (ii)}  assume that $D$ intersects two distinct $(-1)$-curves $\Gamma, \Gamma'$. Contracting $\Gamma$ and $\Gamma'$, $D$ becomes a curve  with non-negative self intersection 
and negative intersection with the canonical divisor. This is absurd since $Y$ is an elliptic surface. If $D \Gamma \geq 2$ then contracting $\Gamma$ we obtain a curve which is not $\epsilon$-exceptional
and has negative intersection with the canonical divisor, impossible on the minimal surface $Y$.
\end{proof}

\begin{lem}\label{lem: -3-curves} Let $D\subset \wt Y $  be an irreducible 
$(-3)$-curve that is not $\epsilon$-exceptional.
Then $\epsilon (D) \subset Y $ is  either  a $(-3)$-curve or a  $(-2)$-curve. 

\end{lem}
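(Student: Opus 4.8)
The plan is to combine adjunction on $\wt Y$ with the relative canonical formula $K_{\wt Y}=\epsilon^*K_Y+E$ and the nefness of $K_Y$. Since $D$ is a smooth rational curve with $D^2=-3$, adjunction gives $K_{\wt Y}\cdot D=-2-D^2=1$. Setting $C:=\epsilon(D)$ and using that $D$ is not $\epsilon$-exceptional — so that $\epsilon_*D=C$ and $D$ is not a component of $E$ — the projection formula yields
\[
1=K_{\wt Y}\cdot D=\epsilon^*K_Y\cdot D+E\cdot D=K_Y\cdot C+E\cdot D .
\]
Because $Y$ is a minimal properly elliptic surface its canonical class is nef, so $K_Y\cdot C\ge 0$; and $E\cdot D\ge 0$ since $E$ is effective and $D\not\subset\operatorname{Supp}E$. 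Hence $(K_Y\cdot C,\,E\cdot D)$ is either $(1,0)$ or $(0,1)$, and in particular $E\cdot D\le 1$.

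If $E\cdot D=0$, then $D$ is disjoint from the exceptional locus of $\epsilon$, so $\epsilon$ is an isomorphism near $D$; thus $C$ is a smooth rational curve with $C^2=D^2=-3$, i.e.\ a $(-3)$-curve. If $E\cdot D=1$, then $D$ meets exactly one irreducible exceptional curve $\Gamma\subset E$, transversally, and $\Gamma$ occurs in $E=K_{\wt Y/Y}$ with coefficient $1$; such a component is necessarily a $(-1)$-curve arising from a blow-up whose centre is not further blown up, so $\epsilon$ factors as $\epsilon=\epsilon_1\circ\sigma$, where $\sigma\colon\wt Y\to\wt Y_1$ contracts $\Gamma$ and $\epsilon_1\colon\wt Y_1\to Y$ is a shorter sequence of blow-ups. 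Contracting $\Gamma$ turns $D$ into a smooth rational curve $D_1$ with $D_1^2=D^2+(D\cdot\Gamma)^2=-2$; moreover, writing $E=\sigma^*E_1+\Gamma$ with $E_1$ the exceptional divisor of $\epsilon_1$, the projection formula gives $D_1\cdot E_1=D\cdot\sigma^*E_1=D\cdot E-D\cdot\Gamma=0$. So $D_1$ is disjoint from the exceptional locus of $\epsilon_1$, which is therefore an isomorphism near $D_1$, and $C=\epsilon_1(D_1)$ is a $(-2)$-curve. (Alternatively one can argue by induction on the number of blow-ups, or invoke directly the standard formulas $E\cdot D=\sum_j m_j$ and $C^2=D^2+\sum_j m_j^2$ for strict transforms, which force a single blow-up at a smooth point of $C$ when $E\cdot D=1$.)

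The only genuinely delicate point is the case $E\cdot D=1$: a priori $K_Y\cdot C=0$ together with $C^2\in\{-2,-1,0\}$ — Hodge index bounds $C^2\le 0$ since $K_Y^2=0$, and $C^2\ge D^2=-3$ — would also allow $C$ to be a $(-1)$-curve, or a singular rational curve of arithmetic genus one with $C^2=0$; the first is impossible on the minimal surface $Y$, and the second is ruled out by the multiplicity/contraction bookkeeping above. Making the reduction ``$E\cdot D=1\Rightarrow$ contract one $(-1)$-curve and land in the disjoint case'' precise — in particular that the chosen $(-1)$-curve is compatible with a factorisation of $\epsilon$ — is the main thing to check, and it follows from the fact that the coefficient-$1$ components of $K_{\wt Y/Y}$ are exactly the exceptional curves of the ``final'' blow-ups.
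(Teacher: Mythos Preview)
Your overall strategy---adjunction plus $K_{\wt Y}=\epsilon^*K_Y+E$---is exactly right, and the case $E\cdot D=0$ is fine. The problem is in the $E\cdot D=1$ case: your claim that the coefficient-$1$ component $\Gamma$ of $E=K_{\wt Y/Y}$ ``is necessarily a $(-1)$-curve arising from a blow-up whose centre is not further blown up'' is false. The coefficient-$1$ components of the relative canonical divisor are the \emph{first-level} exceptionals (those whose centre lies on $Y$), not the last ones; if such a centre is followed by infinitely near blow-ups, the strict transform $\Gamma$ has coefficient $1$ but self-intersection $\le -2$. Concretely, blow up $p_1\in Y$ and then $p_2$ on the first exceptional: then $E=\Gamma_1+2\Gamma_2$ with $\Gamma_1^2=-2$ and $\Gamma_2^2=-1$, so the coefficient-$1$ curve is the $(-2)$-curve. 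In that situation $D$ can meet $\Gamma_1$ transversally and miss $\Gamma_2$, giving $E\cdot D=1$, but there is no $(-1)$-curve $\Gamma$ for you to contract. So the factorisation step, and the final sentence identifying coefficient-$1$ components with ``final'' blow-ups, do not go through.

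The fix is precisely the alternative you sketch in parentheses, which is also what the paper does: write $D=\epsilon^*C-\sum m_iE_i$ with the $E_i$ the total transforms of the successive exceptional divisors (so $E_i\cdot E_j=-\delta_{ij}$ and $m_i\ge 0$ integers). Then $E\cdot D=\sum m_i=1$ forces a single $m_i=1$ and the rest $0$, whence $C^2=D^2+\sum m_i^2=-2$; together with $K_Y\cdot C=0$ this gives $p_a(C)=0$, so $C$ is a smooth rational $(-2)$-curve. That argument is clean and avoids any discussion of which irreducible component of $E$ is hit. I would replace your main treatment of the $E\cdot D=1$ case with this.
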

\begin{proof}
Since $D^2=-3$,  by adjunction we have 
\[1 = K_{\wt{Y}} D = K_Y D +ED,\]
  $K_Y D\geq 0$ because $K_Y$ is nef and $ED\ge 0$ since $D$ is not exceptional. If $ED=0$, then $\epsilon(D)$ is again a $-3$-curve. 
So assume $K_YD=0$ and $ED=1$ and write $D=\epsilon^*(\epsilon(D))-\sum m_iE_i$, with $m_i\ge 0$.  We have $1=DE=\sum m_i$, so $D$ is obtained by blowing up $\epsilon(D)$ once at a smooth point and $\epsilon(D)$ is a $-2$-curve.
\end{proof}

\begin{lem}\label{lem: K_XE}
We have 
  $$K_X K_Y = (\Delta_1 +\Delta_2) K_Y  \geq \frac 12, \ \ \ K_X E  \leq \frac12$$
\end{lem}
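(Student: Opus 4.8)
The equality $K_X K_Y = (\Delta_1+\Delta_2)K_Y$ is immediate from the decomposition $K_X = K_Y + E + \Delta_1 + \Delta_2$ (with the convention that $K_Y$ denotes its pullback to $\wt Y$): indeed $K_Y^2 = 0$ since $Y$ is a minimal elliptic surface, and $K_Y E = 0$ because $E$ is $\epsilon$-exceptional. So the content is the two inequalities, and by the same decomposition $K_X K_Y + K_X E = K_X^2 = 1$ (here $K_X^2 = (f^*K_X)^2 = 1$ since $X$ is an I-surface), so the two bounds $(\Delta_1+\Delta_2)K_Y \ge \tfrac12$ and $K_X E \le \tfrac12$ are equivalent; it suffices to prove the lower bound $(\Delta_1+\Delta_2)K_Y \ge \tfrac12$.

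For the lower bound I would work one singularity at a time and show $\Delta_j K_Y \ge \tfrac15$ for the index-$5$ singularity, $\Delta_j K_Y \ge \tfrac13$ for the index-$3$ singularity, and $\Delta_j K_Y \ge \tfrac12$ for the index-$2$ singularity — in all cases giving a total of at least $\tfrac12$ once one observes that if neither singularity has index $2$ then there are two contributions summing to at least $\tfrac15 + \tfrac13 > \tfrac12$. Concretely, $\Delta_j K_Y$ is a sum $\sum \frac{a_\ell}{n_j}(\text{curve})\cdot K_Y$ over the components of the T-string with the coefficients from Table \ref{tab: discrepancies}, and each term is a nonnegative rational with denominator dividing $n_j$ since $K_Y$ is nef; so $\Delta_j K_Y$ is either $0$ or at least $\tfrac1{n_j}$. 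The point is therefore to rule out $\Delta_j K_Y = 0$, i.e. to show that $K_Y$ meets the T-string nontrivially. If $K_Y D = 0$ for every component $D$ of $\Delta_j$, then every component of the T-string is a $(-2)$-curve on $\wt Y$ (by adjunction, since $K_Y$ nef forces $ED \ge 0$ and then $D^2 = -2 - K_{\wt Y}D$ with $K_{\wt Y}D = K_YD + ED \ge 0$, while a component with $D^2 \le -3$ would need $K_{\wt Y}D \ge 1$). But the T-strings for the $\frac1{18}(1,5)$ and $\frac1{25}(1,14)$ singularities contain curves of self-intersection $-3,-4,-5$, and the $\frac14(1,1)$ T-string is a single $(-4)$-curve — none is a chain of $(-2)$-curves — a contradiction.

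The one genuinely delicate point is that the self-intersections recorded in Table \ref{tab: discrepancies} are computed on $\wt Y$, but to conclude I also want to be sure there is no cancellation coming from the $\epsilon$-exceptional locus; this is why I phrase the contradiction purely on $\wt Y$ (a $(-4)$-curve on $\wt Y$ simply cannot be a $(-2)$-curve on $\wt Y$), so no comparison with $Y$ is actually needed and the argument is clean. Alternatively, and perhaps more transparently, one can argue numerically: from \eqref{eq: d-r} we have $K_{\wt Y}^2 = d_1 - r_1 + d_2 - r_2 - 1$, and combining $K_X^2 = 1$ with $K_X = K_{\wt Y} + \Delta_1 + \Delta_2$ gives $(\Delta_1+\Delta_2)K_Y = (\Delta_1 + \Delta_2)(K_{\wt Y} + \Delta_1 + \Delta_2) - (\Delta_1+\Delta_2)E$, and one checks using Table \ref{tab: discrepancies} and the fact that $\Delta_j$ is the codiscrepancy (so $\Delta_j(K_{\wt Y} + \Delta_j) = 0$, $\Delta_i \Delta_j = 0$ for $i \ne j$) that this reduces to a small finite computation. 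Either way the main obstacle is purely bookkeeping: confirming that $\Delta_j K_Y$ cannot vanish, for which the self-intersections in Table \ref{tab: discrepancies} are exactly what is needed. Finally, $K_X E \le \tfrac12$ follows by subtracting from $K_X^2 = 1$.
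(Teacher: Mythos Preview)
Your reduction to the single inequality $(\Delta_1+\Delta_2)K_Y \ge \tfrac12$ via $K_X^2 = K_X(K_Y+E) = 1$ is fine, and the observation that $\Delta_j K_Y$ lies in $\tfrac1{n_j}\ZZ_{\ge 0}$ is correct. The gap is in your argument that $\Delta_j K_Y$ cannot vanish. You claim that $K_Y D = 0$ forces $D$ to be a $(-2)$-curve on $\wt Y$, but this implication runs the wrong way. From $K_{\wt Y} D = K_Y D + ED$ and $K_Y D = 0$ you only obtain $K_{\wt Y} D = ED$, and nothing prevents $ED$ from being positive: a $(-4)$-curve $A_j$ on $\wt Y$ can have $K_Y A_j = 0$ as soon as $E A_j = 2$ (for instance if $\epsilon(A_j)$ is a $(-2)$-component of a fiber of $Y$ passing through two of the blown-up points). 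Your parenthetical ``$K_Y$ nef forces $ED \ge 0$'' is not what nefness says, and even granting $ED \ge 0$ (which holds when $D$ is not $\epsilon$-exceptional) you would only conclude $D^2 \le -2$, not $D^2 = -2$. So the contradiction you announce does not materialise, and with it your case-by-case bounds $\Delta_j K_Y \ge \tfrac1{n_j}$ collapse.

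The paper's argument uses the implication in the \emph{correct} direction. By Lemma~\ref{lem: -2-curves}, every $(-2)$-curve $D$ on $\wt Y$ satisfies $K_Y D = 0$. Inspecting Table~\ref{tab: discrepancies}, the only components of $\Delta_1 + \Delta_2$ with coefficient strictly less than $\tfrac12$ are precisely the $(-2)$-curves $C_j$, so these contribute nothing to $(\Delta_1+\Delta_2)K_Y$. Every remaining component has coefficient $\ge \tfrac12$ and nonnegative integral $K_Y$-degree; since the total is strictly positive (because $K_Y$ moves and $K_X$ is positive off the $f$-exceptional locus), some such term is $\ge \tfrac12 \cdot 1$. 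This sidesteps entirely the question of whether each $\Delta_j K_Y$ is individually positive, which is what your approach would need but does not establish.
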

\begin{proof} We have
$$K_X K_Y = ( K_{ Y}+E + \Delta_1 + \Delta_2) K_Y = ( \Delta_1 + \Delta_2) K_Y$$
since $K_Y^2=K_Y E =0$.  Moreover, we have $K_XK_Y>0$ because $K_Y$ moves and $K_X$ is positive outside the support of the $f$-exceptional locus.
 Looking at  the description of the codiscrepancy divisors  we note that  the  divisors with coefficient less than $\frac12$ are $(-2)$-curves. Then by the above Lemma \ref{lem: -2-curves} we obtain  $( \Delta_1 + \Delta_2) K_Y \geq \frac12 $. 
The second inequality follows since $1=K_X^2 = K_X (K_Y + E )$. 
\end{proof}

\begin{prop}\label{lem: exceptional} 
Let $D \subset  E \subset \wt Y$ be an $\epsilon$-exceptional irreducible  $(-n)$-curve, with $n\geq 2$. Then $D$ is also $f$-exceptional.

\end{prop}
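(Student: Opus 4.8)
The plan is to argue by contradiction, playing the curve $D$ against the bound $K_X\cdot E\le\frac12$ of Lemma~\ref{lem: K_XE}. Suppose $D$ is \emph{not} $f$-exceptional; then $D$ is not a component of the effective divisor $\Delta_1+\Delta_2$ (all of whose components are $f$-exceptional), so $(\Delta_1+\Delta_2)\cdot D\ge 0$, and $K_X\cdot D>0$ because $K_X$ is the pullback of an ample class and $D$ is not contracted by $f$. Writing $K_X=K_Y+E+\Delta_1+\Delta_2$ and using $\epsilon_\ast D=0$ (hence $K_Y\cdot D=\epsilon^\ast K_Y\cdot D=0$) together with adjunction on the rational curve $D$, one gets $E\cdot D=K_{\wt Y}\cdot D=-2-D^2=n-2$, whence
\[ K_X\cdot D=(n-2)+(\Delta_1+\Delta_2)\cdot D\ \ge\ n-2 . \]
Since $K_X$ is nef and $E-D$ is effective, $K_X\cdot E\ge K_X\cdot D\ge n-2$; for $n\ge 3$ this contradicts $K_X\cdot E\le\frac12$, proving the statement in that range.

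So assume $n=2$. Then $E\cdot D=0$ and $K_X\cdot D=(\Delta_1+\Delta_2)\cdot D>0$, and by Lemma~\ref{lem: -2-curves}(ii) the curve $D$ meets a unique $\epsilon$-exceptional $(-1)$-curve $\Gamma$, with $D\cdot\Gamma=1$. By Remark~\ref{(-1)curves}, $K_X\cdot\Gamma>0$; since $E\cdot\Gamma=K_{\wt Y}\cdot\Gamma=-1$ and $K_Y\cdot\Gamma=0$ this reads $(\Delta_1+\Delta_2)\cdot\Gamma>1$. As $D\ne\Gamma$, the divisor $E-D-\Gamma$ is effective, so nefness of $K_X$ and Lemma~\ref{lem: K_XE} give
\[ \tfrac12\ \ge\ K_X\cdot E\ \ge\ K_X\cdot D+K_X\cdot\Gamma\ =\ (\Delta_1+\Delta_2)\cdot(D+\Gamma)-1 , \]
so that $1<(\Delta_1+\Delta_2)\cdot(D+\Gamma)\le\tfrac32$.

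The remaining task — finishing the case $n=2$ — is the main obstacle, and it is a finite case analysis. The coefficients occurring in $\Delta_1+\Delta_2$ lie in $\{\tfrac13,\tfrac25,\tfrac12,\tfrac35,\tfrac23,\tfrac45\}$ (Table~\ref{tab: discrepancies}); combined with $(\Delta_1+\Delta_2)\cdot\Gamma\le\tfrac32-(\Delta_1+\Delta_2)\cdot D\le\tfrac32-\tfrac13=\tfrac76$ and $(\Delta_1+\Delta_2)\cdot\Gamma>1$, this severely restricts which components of which T-string $\Gamma$ may meet, and forces $D$ to meet exactly one $(-2)$-curve $C_0$ of a T-string with $D\cdot C_0=1$ — a curve which is then itself $\epsilon$-exceptional by Lemma~\ref{lem: -2-curves}(i). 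One then tracks the curves adjacent to $\Gamma$ and to $C_0$ using Lemma~\ref{lem: -2-curves}, Lemma~\ref{lem: -3-curves}, Remark~\ref{multiplicity} (which gives extra $(-1)$-curve contributions inside $E-D$, disjoint from $\Gamma$ by Remark~\ref{different Gammas}) and the explicit shape of the T-strings, and checks that in every surviving configuration one either violates the numerical bound above or produces a cycle of curves inside $E$, which is impossible since $E$ is the exceptional divisor of a sequence of blow-ups of a smooth surface. Everything preceding this bookkeeping is immediate; this is where the work lies.
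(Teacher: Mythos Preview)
Your treatment of the case $n\ge 3$ is essentially the paper's: assuming $D$ is not $f$-exceptional gives $(\Delta_1+\Delta_2)\cdot D\ge 0$, hence $K_X\cdot D\ge n-2\ge 1$, contradicting $K_X\cdot E\le\tfrac12$.

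For $n=2$, however, your proposal is incomplete. You set up the inequalities $1<(\Delta_1+\Delta_2)\cdot(D+\Gamma)\le\tfrac32$ correctly, but then only outline a programme (forcing $D$ to meet a $(-2)$-component $C_0$ of a T-string, tracking adjacent curves, looking for cycles in $E$) and explicitly defer the work. That sketch is plausible but unexecuted, and it is more delicate than necessary.

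The paper's argument for $n=2$ is both shorter and avoids the configuration chase. Recall that the setting fixes the pair of indices $(i_1,i_2)\in\{(5,2),(5,3),(3,2)\}$, hence the number $k$ of blow-ups via \eqref{eq: d-r}. The key input you underuse is Remark~\ref{multiplicity}: for a $(-n)$-curve $D\subset E$ with $n\ge 2$ one has $E-D\ge\sum m_h\Gamma_h$ with $\Gamma_h$ irreducible $(-1)$-curves and $\sum m_h\ge k$. Since each $K_X\cdot\Gamma_h>0$ lies in $\tfrac{1}{i_1 i_2}\ZZ$, one gets $K_X\cdot(E-D)\ge \tfrac{k}{i_1 i_2}$. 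On the other hand, as $D$ is not a component of $\Delta_1+\Delta_2$ but meets its support, $K_X\cdot D=(\Delta_1+\Delta_2)\cdot D$ is at least the smallest coefficient occurring in $\Delta_1+\Delta_2$ for that pair. Adding the two bounds one finds, case by case,
\[
K_X\cdot E\ \ge\ \tfrac25+\tfrac{3}{10},\qquad \tfrac13+\tfrac{4}{15},\qquad \tfrac13+\tfrac{2}{6},
\]
for $(i_1,i_2)=(5,2),(5,3),(3,2)$ respectively; each exceeds $\tfrac12$, contradicting Lemma~\ref{lem: K_XE}. This replaces your projected case analysis with a single numerical inequality per pair.
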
 
\begin{proof}
Suppose not, then we have $0< K_X D = K_{\wt{Y}} D + (\Delta_1 +\Delta_2) D= n-2 +  (\Delta_1 +\Delta_2) D$.
If $n\geq 3$ then we get  $K_X D \geq 1$, which is absurd since $K_X D \leq K_X E \leq \frac12$ by Lemma \ref{lem: K_XE}.

Therefore we may assume  $n=2$. 
 We first consider the case where    $Q_1$, $Q_2$ are T-singularities of index $i_1=5$, resp.~$i_2=2$, so that by equation  \eqref{eq: d-r},  $\epsilon \colon \wt{Y}\to Y$ is  the composition of three blow-ups. 
In this case  $K_X D = (\Delta_1 +\Delta_2) D\geq \frac25$  (see the codiscrepancies shown above).
By Remark \ref{multiplicity} and Remark \ref{(-1)curves}, since we have three blow-ups and for every irreducible $(-1)$-curve $\Gamma \subset E$ it is  $K_X \Gamma \geq \frac{1}{10}$, we obtain  $K_X (E-D) \geq \frac{3}{10}$. 
Whence 
$ K_XE =  K_XD + K_X(E-D) \geq \frac25 +\frac{3}{10}>\frac{1}{2} $, which contradicts Lemma \ref{lem: K_XE}. 

The cases $(i_1,i_2)=(5,3)$ and  $(i_1,i_2)=(3,2)$ are similar. 
\end{proof}

 \begin{cor}\label{(-n)-curve}   
Let $D \subset  E \subset \wt Y$ be an $\epsilon$-exceptional irreducible  $(-n)$-curve. 
If $n \geq 2$ then $K_X D =0$; if $n=1$ then $K_X D \geq \frac{1}{i_1 i_2}$. 

\end{cor}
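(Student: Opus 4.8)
The plan is to read off both claims directly from the preceding results, which already do almost all the work. For an $\epsilon$-exceptional irreducible $(-n)$-curve $D$ with $n\geq 2$, Proposition~\ref{lem: exceptional} tells us that $D$ is also $f$-exceptional, so $D$ is one of the components of a T-string $\Delta_1$ or $\Delta_2$. Inspecting Table~\ref{tab: discrepancies}, the only $(-n)$-curves with $n\geq 2$ appearing in a codiscrepancy divisor and satisfying $K_Y D = ED = 0$ (Lemma~\ref{lem: -2-curves}) would have to be the $(-2)$-curves; but more simply, since $f^*K_X = K_{\wt Y} + \Delta$ and $D$ is $f$-exceptional, $K_X D = (f^*K_X) D = 0$ because $D$ is contracted by $f$. (Alternatively: $K_X D = K_{\wt Y} D + \Delta D = (n-2) + \Delta D$, and for $n = 2$ this is $\Delta D$, which for the end $(-2)$-curve $C_j$ of each T-string in Table~\ref{tab: discrepancies} works out to $0$ by a direct computation with the listed coefficients; for $n\geq 3$ the curve is not $f$-exceptional, contradicting Proposition~\ref{lem: exceptional}.) So the first assertion $K_X D = 0$ is immediate once Proposition~\ref{lem: exceptional} is invoked; I would phrase it via $f^*K_X \cdot D = 0$.

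For the case $n = 1$, the starting point is Remark~\ref{(-1)curves}, which gives $K_X \Gamma > 0$ for any $\epsilon$-exceptional irreducible $(-1)$-curve $\Gamma$. The task is to upgrade the strict inequality $K_X D > 0$ to the quantitative bound $K_X D \geq \frac{1}{i_1 i_2}$. The natural approach is to clear denominators: write $K_X = K_Y + E + \Delta_1 + \Delta_2$, so that $K_X D = K_Y D + ED + \Delta_1 D + \Delta_2 D$. The integer part $K_Y D + ED$ is an integer (and $\geq 0$ once one checks $ED \geq -1+$ something, or more carefully analyzes how $D$ sits in $E$), while $\Delta_1 D$ and $\Delta_2 D$ have denominators dividing $i_1$ and $i_2$ respectively (the coefficients in Table~\ref{tab: discrepancies} have denominators $2$, $3$, $5$, i.e. the index). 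Hence $i_1 i_2 \cdot K_X D \in \ZZ$, and since $K_X D > 0$ we conclude $K_X D \geq \frac{1}{i_1 i_2}$.

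The one point requiring a little care — and the main (minor) obstacle — is justifying that $i_1 i_2 \cdot K_X D$ is genuinely an integer, i.e. that the "integral" contribution $K_Y D + ED = K_{\wt Y} D - D^2 - 1 + \dots$ really is an integer and that $\Delta_j D$ has denominator dividing $i_j$. The latter is clear from Table~\ref{tab: discrepancies}: each $\Delta_j = \sum (a/i_j) (\text{curve})$ with $a \in \ZZ$, so $\Delta_j D \in \frac{1}{i_j}\ZZ$. The former is clear because $K_{\wt Y} D$ and $D^2$ are integers by adjunction. So $i_1 i_2 K_X D = i_1 i_2(K_{\wt Y} D) + i_2 (i_1 \Delta_1 D) + i_1(i_2 \Delta_2 D) \in \ZZ$, and positivity from Remark~\ref{(-1)curves} forces it to be at least $1$. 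I would present this as a two-line argument: state $i_1 i_2 K_X D \in \ZZ_{>0}$, citing Remark~\ref{(-1)curves} for positivity and Table~\ref{tab: discrepancies} together with adjunction for integrality, and conclude.
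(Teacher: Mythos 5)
Your proof is correct and matches the paper's intended argument: for $n\geq 2$, Proposition~\ref{lem: exceptional} makes $D$ also $f$-exceptional, so $K_XD=f^*K_X\cdot D=0$ by the projection formula, and for $n=1$ the positivity from Remark~\ref{(-1)curves} combines with the integrality of $i_1i_2K_XD$ --- which the paper phrases as ``$X$ has index $i_1i_2$'' (cf.\ the proof of Proposition~\ref{no 5+2 sing}) and which you derive equivalently from $K_{\wt Y}D\in\ZZ$ and the denominators of the coefficients in Table~\ref{tab: discrepancies}. The only blemish is your parenthetical ``alternative'' for $n\geq 3$, where the clause ``the curve is not $f$-exceptional, contradicting Proposition~\ref{lem: exceptional}'' is garbled (the curve \emph{is} $f$-exceptional by that proposition, and then $f^*K_X\cdot D=0$ as in your main argument), but nothing in your proof depends on that aside.
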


 \subsection{I-surfaces with a singularity of  type $\frac{1}{25}(1, 14)$ and a singularity of index $2$ do not exist}

In this section we are going to prove  the following 
\begin{prop}\label{no 5+2 sing} There are no T-singular I-surfaces with a singularity of type $\frac{1}{25}(1,14)$ and  a singularity of type $\frac{1}{4}(1,1)$. 
\end{prop}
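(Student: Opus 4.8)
The idea is to run the numerical argument from the previous subsection to its logical end: assume such an I-surface $X$ exists, with $Q_1$ of type $\frac{1}{25}(1,14)$ and $Q_2$ of type $\frac14(1,1)$, and derive a contradiction from the constraints on $K_X E$, $K_X K_Y$ and the structure of $\wt Y$. By \eqref{eq: d-r}, with $d_1-r_1 = 1-3 = -2$ and $d_2-r_2 = 1-1 = 0$, we get $K_{\wt Y}^2 = -3$, so $\epsilon\colon \wt Y\to Y$ is a composition of $k=3$ blow-ups and $K_{\wt Y} = \epsilon^*K_Y + E$ with $E = E_1+E_2+E_3$. Recall from Table \ref{tab: discrepancies} that $\Delta_1 = \frac35 A_1 + \frac45 B_1 + \frac25 C_1$ with $A_1^2=-3$, $B_1^2=-5$, $C_1^2=-2$, while $\Delta_2 = \frac12 A_2$ with $A_2^2 = -4$.

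First I would pin down which components of the T-strings can be $\epsilon$-exceptional. By Corollary \ref{(-n)-curve} (or directly Proposition \ref{lem: exceptional} applied in the index pair $(i_1,i_2)=(5,2)$), any $\epsilon$-exceptional irreducible $(-n)$-curve with $n\ge 2$ is $f$-exceptional and satisfies $K_X D = 0$. So the only way the three blow-ups of $\epsilon$ can "hide" inside the $f$-exceptional locus is by contracting the $(-2)$-curves $C_1$ and $A_2$ is a $(-4)$-curve, not eligible) — but $C_1$ is a single curve, giving at most one blow-up's worth, and by Lemma \ref{lem: -2-curves}(ii) a contracted $(-2)$-curve meets exactly one $(-1)$-curve $\Gamma\subset E$ transversally, so contracting it accounts for exactly one $E_i$. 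The curves $A_1$ ($-3$), $B_1$ ($-5$), $A_2$ ($-4$) cannot be $\epsilon$-exceptional without forcing $K_X$ to be too positive on them (the $n\ge 3$ case of Proposition \ref{lem: exceptional}: $K_X D \ge n-2 \ge 1 > \frac12 \ge K_X E$). Hence at most one of the three blow-ups of $\epsilon$ is "absorbed" by the T-strings, so at least two of the $(-1)$-curves $\Gamma\subset E$ are \emph{not} $f$-exceptional, and by Remark \ref{(-1)curves} each contributes $K_X\Gamma > 0$; more precisely, since these $\Gamma$ are $\epsilon$-exceptional $(-1)$-curves, Corollary \ref{(-n)-curve} gives $K_X\Gamma \ge \frac1{i_1 i_2} = \frac1{10}$.

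Now I would push the inequality of Lemma \ref{lem: K_XE}. We always have $K_X E \le \frac12$. On the other hand, decompose $E$ into its $f$-exceptional part (contained in the union of the T-strings, so among $C_1, A_2$ — and only $C_1$ is possible as argued) and the rest. If none of $C_1$ is $\epsilon$-exceptional, then all three $(-1)$-curves in $E$ are non-$f$-exceptional and using Remark \ref{multiplicity} to bound the total multiplicity of irreducible $(-1)$-curves from below, we would get $K_X E \ge 3\cdot\frac1{10} = \frac3{10}$ — not yet a contradiction, so this is the delicate case and I expect the main obstacle to lie here. To finish I would need a sharper lower bound on $K_X\Gamma$ for the relevant $(-1)$-curves, or a sharper analysis of how $E$ meets the codiscrepancy divisors: write $K_X E = (K_Y + \Delta_1 + \Delta_2)E = \Delta_1 E + \Delta_2 E$ (since $K_Y E = 0$), and analyze $\Delta_1 E$ and $\Delta_2 E$ using the continued-fraction geometry of the two singularities together with the fact that $\epsilon$ must create, via three blow-ups, curves of self-intersection $-5$ (for $B_1$) and $-4$ (for $A_2$) on $\wt Y$ starting from a minimal elliptic surface $Y$ whose curves have self-intersection $\ge -3$ (type analysis of singular elliptic fibers, cf.\ Lemma \ref{lem: -3-curves}). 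In particular, producing a $(-5)$-curve by blow-ups forces several of the $E_i$ to lie on it, and producing a $(-4)$-curve forces more; counting shows the three available blow-ups cannot simultaneously produce a $(-5)$-curve, a $(-4)$-curve, the $(-3)$-curve $A_1$, and leave room for $C_1$ — unless some $E_i$ is shared, which then forces intersections among the components of $\Delta_1$ and $\Delta_2$ that contradict the T-string shapes $[2,5,3]$ and $[4]$. Making this counting precise, and ruling out the borderline configuration where $K_X E$ sits exactly at $\frac12$, is where the real work is; the cleanest route is probably to show $\Delta_1 E \ge \frac25$ and $\Delta_2 E \ge \frac12$ would already overshoot, so instead establish $\Delta_2 E = 0$ is impossible (the $(-4)$-curve $A_2$ must be touched by $\epsilon$, contradicting $n\ge 3$ non-exceptionality) — this dichotomy is the crux.
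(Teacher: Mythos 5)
Your setup coincides with the paper's: $k=3$ blow-ups from \eqref{eq: d-r}, the bounds $K_XE\le\tfrac12$ and $K_X\Gamma\ge\tfrac1{10}$ for each $\epsilon$-exceptional irreducible $(-1)$-curve $\Gamma$, and the observation that the blow-ups cannot all be absorbed by a single curve. But the argument stops exactly where the proof begins, and you say so yourself (``this is where the real work is''). The missing step is to convert the numerics into a finite list of incidence patterns: since $K_X\Gamma\le\tfrac3{10}$, one gets $\tfrac{11}{10}\le(\Delta_1+\Delta_2)\Gamma\le\tfrac{13}{10}$, and after discarding the intersections forbidden by Lemmas \ref{lem: -2-curves} and \ref{lem: -3-curves} only three configurations survive: $\Gamma$ meets $A_1$ and $A_2$ once each, or $B_1$ and $C_1$, or $B_1$ and $A_2$. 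Each is then excluded not by counting self-intersections but by contracting the $(-1)$-curves down to the minimal elliptic surface $Y$ and checking that the images of the T-string components would have to lie in a fiber in a way incompatible with Kodaira's classification (for instance, in the second pattern, contracting $\Gamma$ and then the image of $C_1$ yields a nodal curve meeting $\epsilon(A_1)$ with $\epsilon(A_1)K_Y=\epsilon(B_1)K_Y=0$, which no fiber type allows). Your proposed alternative ending --- counting how three blow-ups could simultaneously ``produce'' a $(-5)$-, a $(-4)$- and a $(-3)$-curve --- does not get off the ground, because none of $A_1,B_1,A_2$ need be created by $\epsilon$ at all: they are typically strict transforms of curves already present on $Y$ (as in the RU construction, where the $(-5)$-curve is the strict transform of a fiber), so no such production is forced.

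There is also a misapplication of the auxiliary results. You invoke ``the $n\ge3$ case of Proposition \ref{lem: exceptional}'' to rule out $A_1$, $B_1$, $A_2$ being $\epsilon$-exceptional, but that proposition only asserts that an $\epsilon$-exceptional $(-n)$-curve must be $f$-exceptional; since $A_1,B_1,C_1,A_2$ already are $f$-exceptional, the inequality $K_XD\ge n-2$ used in its proof never applies to them, and $K_XD=0$ for all of them by Corollary \ref{(-n)-curve}. The correct (but different) reason a $(-5)$- or $(-4)$-curve cannot be $\epsilon$-exceptional here is that creating one inside the exceptional locus of $\epsilon$ would consume more than the three available blow-ups. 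As written, the proposal establishes the framework but not the proposition.
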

\begin{rmk} A generalisation of the below proof shows that there are no T-singular I-surfaces with a singularity $\frac1{25}(1,14)$ and a singularity of type $\frac1{4d}(1,2d-1)$. 
We do not include the details here but it involves keeping track of the possible intersections with the index 2 T-chain. 
Moreover, we do not know if there is an I-surface with more general singularities of index 5 and index 2. 
\end{rmk}
\begin{proof}

Assume by a contradiction the existence of such a surface and consider the diagram \ref{diagram}
where  the resolution of the two singular points yields a string of type  $[3,5,2]$ and a string of type  $[4]$. 

The strategy of the proof consists in studying the possible configuration of $\epsilon$-exceptional irreducible  $(-1)$-curves. 

\vspace{ 5 mm} 

First note that by equation  \eqref{eq: d-r},   $\epsilon\colon \wt{Y} \to Y $ is a composition of three blow-ups. 
Let $Q_1$ be the point of index 5 and $Q_2$ be the point of index 2. The codiscrepancy divisor corresponding to $Q_1$ (respectively $Q_2$) is $\Delta_1=  \frac 35A_1+\frac45B_1+\frac25C_1$ (resp.~$\Delta_2 = \frac12 A_2$). Thus we can write
$$K_X = K_Y + \sum_{i=1}^3 E_i + \Delta_1 + \Delta_2.$$

By Remark \ref{multiplicity}, Lemma \ref{lem: K_XE}, Corollary \ref{(-n)-curve} we have  $\frac3{10}\le K_X E \le\frac12$ and $\frac12\le K_X K_{Y} \le \frac7{10}$.
Now $$K_X K_{ Y}=  (\Delta_1 +\Delta_2) K_Y=  (\frac 35A_1+\frac45B_1+\frac25C_1 + \frac12  A_2)K_Y,$$ hence the only possibilities for  $K_X K_Y$ are $\frac12, \frac35$ and for $K_X E$ we get:
 \begin{equation}\label{EK_X} K_X E=\frac12  \ \ \mbox{ or }   \ \ K_X E=\frac25 . 
 \end{equation}

 Now let $\Gamma$ be an   $\epsilon$-exceptional irreducible $(-1)$-curve.  We have $ K_X \Gamma \geq \frac{1}{10}$ since $X$ has index 10. 
Now, since $K_X (E - \Gamma) \geq \frac{2}{10}$ we obtain  $ K_X \Gamma \leq \frac{3}{10}$. Hence
since $K_X = K_{\wt{Y}} + \Delta_1 + \Delta_2$ and $K_{\wt Y}\Gamma=-1$,   we get 
$\frac{11}{10} \leq ( \Delta_1 + \Delta_2)\Gamma \leq \frac{13}{10}$.

 We exclude the cases where $\Gamma C_1 =2, 3$ and  $\Gamma A_1 =2$ since  they contradict Lemma \ref{lem: -2-curves}(i) and  Lemma \ref{lem: -3-curves}.
Therefore we are left with 
 the following possibilities: 
$$\begin{array}{lllllr}
( \Delta_1 + \Delta_2)\Gamma = \frac{3}{5} +\frac12 &=& \frac 35A_1 \Gamma + \frac12A_2 \Gamma & ;  & K_X\Gamma =\frac{1}{10} &   (I) \\ 
& & & & & \\ 
( \Delta_1 + \Delta_2)\Gamma = \frac{4}{5}  +\frac25  &=& \frac 45B_1 \Gamma + \frac25 C_1 \Gamma &  ;  & K_X\Gamma =\frac{2}{10} &  \ \ \ \  (II) \\ 
& & & & & \\ 
( \Delta_1 + \Delta_2)\Gamma = \frac{4}{5}  +\frac12  &=& \frac 45B_1 \Gamma + \frac12A_2 \Gamma &  ;  & K_X\Gamma =\frac{3}{10} &  (III)  \\ 
\end{array}
$$

\begin{figure}
\begin{center}
\begin{tikzpicture}
[curve/.style ={thick, every loop/.style={looseness=10, min distance=30}},
exceptional/.style = {Orange},
scale = 0.6
]

\begin{scope}[xshift=-17.5cm]

\draw[thick, rounded corners] (-3, -2) rectangle (3.4, 4) node[ right]{};
\draw [curve, orange] (-1,-1) node[left]{} to  (-1,3) node[above] {$\Gamma$};

\draw [curve] (-1.5,-0.5) node[below, right]{ \small \ \ $A_2$} to[  looseness = 1]   ++(2.5,0);

\draw [curve,Plum] (-1.8,2) node[above left ]{ } to[  looseness = 1]   ++(1.5,0);
\draw [curve,Plum] (-0.8,2) node[right  ]{} to[  looseness = 1]   ++(2.5,0);
\draw [curve,Plum] (1.3,2) node[above right  ]{\small \  $\Delta_1$} to[  looseness = 1]   ++(1.5,0);

\end{scope}

\begin{scope}[xshift=-10.5cm]

\draw[thick, rounded corners] (-3, -2) rectangle (3.4, 4) node[ right]{};
\draw [curve, orange] (1.2,-1) node[left]{} to  (1.2,3) node[above] {$\Gamma$};

\draw [curve] (-0.5,-0.5) node[below, right]{\small $A_2$ } to[  looseness = 1]   ++(2.5,0);

\draw [curve,Plum] (-1.8,2) node[above left ]{ } to[  looseness = 1]   ++(1.5,0);
\draw [curve,Plum] (-0.8,2) node[right  ]{} to[  looseness = 1]   ++(2.5,0);
\draw [curve,Plum] (1.3,2) node[above right  ]{\small \  $\Delta_1$} to[  looseness = 1]   ++(1.5,0);

\end{scope}

\begin{scope}[xshift=-3.5cm]

\draw[thick, rounded corners] (-3, -2) rectangle (3.4, 4) node[ right]{};

\draw [curve] (-0.5,-0.5) node[below, right]{\small $A_2$ } to[  looseness = 1]   ++(2.5,0);

\draw [curve,Plum] (-1.8,2) node[above left ]{ } to[  looseness = 1]   ++(1.5,0);
\draw [curve,Plum] (-0.8,2) node[right  ]{} to[  looseness = 1]   ++(2.5,0);
\draw [curve,Plum] (1.3,2) node[above right  ]{\small \ \ \  $\Delta_1$} to[  looseness = 1]   ++(1.5,0);

\draw [curve,orange] (-0.2,2) node[below right] {$\Gamma$} to[  looseness = 2]   ++(2.6,0);

\end{scope}

\end{tikzpicture}
\end{center}
\caption{Configurations of type ({\em I} ), ({\em III} ) and ({\em II} )}
\end{figure}
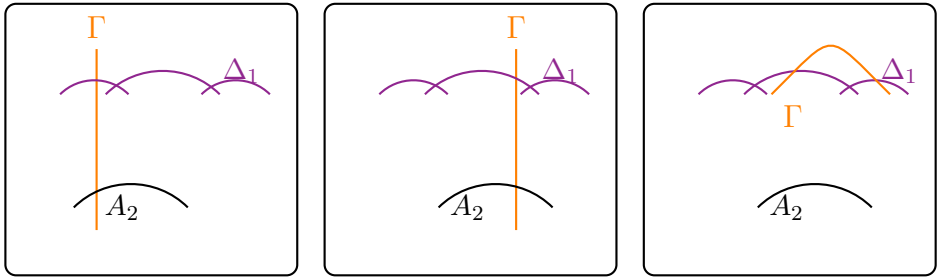

We will show that any of the above configurations gives a contradiction. 
Note that, since  in view of Prop.~\ref{lem:  exceptional} none of the $\Gamma$'s gives a three step contraction on its own, we need to combine them.

\hfill\break 
{\em Configuration (II).}
Assume that there is a curve $\Gamma$ of type ({\em II} ).
Since $K_X (\sum_{i=1}^{3} E_i)= \frac12 $ or $\frac25$, there is a second exceptional curve $\Gamma'$ of type ({\em I} ).

First blow down $\Gamma$. Then the image of $C_1$   is a $(-1)$-curve which intersects  the image of $B_1$ in two points.  Blowing this down
we obtain  a nodal curve  which intersects the image of $A_1$  in 1 point.

 Now  blow down  $\Gamma'$. 
 We obtain a minimal elliptic  surface $Y$  with   $\epsilon(A_1)K_Y  =\epsilon(B_1)K_Y =0$, 
which implies that $\epsilon(A_1)+  \epsilon(B_1)$   is contained in a fiber, contradicting Kodaira's list.

So a type ({\em II} ) configuration does not exist. 

\hfill\break 
{\em Configuration} ({\em III} ).
Consider  a type ({\em III} ) curve $\Gamma$. Since  a type ({\em II}) configuration does not exist, then by equation \eqref{EK_X}  there is a second exceptional curve $\Gamma'$ of type ({\em I}).

Then blowing down $\Gamma$ and $\Gamma'$  we see that  there are no $(-1)$-curves 
arising from $\Delta_1 + \Delta_2$. Hence there exists a third  curve $\Gamma''$ of type (I). Blowing it down, we obtain  another  $(-1)$-curve, which is absurd. 

So a type ({\em III} ) configuration does not exist.

\hfill\break 
{\em Configuration} ({\em I} ). 
 We are left with the case where all the exceptional curves are of type ({\em I} ).  If there exist two curves $\Gamma, \Gamma '$ of type ({\em I} ),  then 
 blowing them down and arguing as in the previous case we obtain a curve having  negative intersection with the canonical divisor,
  which is absurd. Since, as we noticed earlier, there are at least two irreducible $-1$-curves on $\wt{Y}$, the proof is complete.
    \end{proof}

 \subsection{The divisor of surfaces with one singularity of index 3}\label{sec: divisor}

In this subsection we study the divisor of surfaces with an additional singularity of index $3$, and show that it fits into the original Pfaffian format of \S\ref{sec!relations}.

\begin{lem}\label{lem!index-15}
In the notation of \S\ref{sec!relations}, 
if $\tau=0$ and $P_{10}$, $\theta$ are general then $X$ has a $\frac 1{9}(1,5)$ singularity in addition to the $\frac1{25}(1,14)$ singularity. 
\end{lem}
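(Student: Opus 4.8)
The strategy is to exhibit the extra singular point explicitly and put the local equation of $X$ there into normal form. First note that when $\tau=0$ the coordinate point $P_w=(0,0,0,1,0,0,0,0)\in\PP(1,1,2,3,4,4,5,7)$ lies on $X'$: every monomial occurring in $R_1,\dots,R_{12},R_{14}$ is divisible by one of $x_0,x_1,y,u_0,u_1,z,t$, hence vanishes at $P_w$, while the only monomial of $R_{13}$ not of this shape is $\tau w^4$, which now vanishes as well. (For general $\tau$ this term shows $P_w\notin X'$, so $P_w$ is genuinely a new point of the stratum.)

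Next I would work in the orbifold affine chart $U_w\colon(w\ne0)\cong\CC^7/\mu_3$, with coordinates $x_0,x_1,y,u_0,u_1,z,t$ carrying $\mu_3$-weights $1,1,2,1,1,2,1$, and set $w=1$. As in the proof of Corollary \ref{cor!index-5-format}, the relations $R_2,R_3,R_6,R_{10}$ allow one to solve for $x_0,x_1,u_0,t$, giving
\[x_1=y^2,\qquad x_0=y^5,\qquad u_0=y^3u_1,\qquad t=yu_1^2,\]
and after these (manifestly $\mu_3$-equivariant) substitutions $R_1,R_4,R_5,R_7,R_8,R_9$ become identities while $R_{11},R_{12},R_{14}$ become multiples of $R_{13}$; thus $X'\cap U_w$ is the hypersurface $(yP+\theta u_1z+u_1^3=0)$ in $\CC^3_{y,u_1,z}/\mu_3$, where $P$ is now evaluated at the substitutions above. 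Since every generator is sent to a monomial of positive order in $(y,u_1,z)$, the pullback of $P$ vanishes at the origin; a finite check of which weighted-degree-$10$ monomials land in order $\le2$ shows that its order-$\le2$ part equals $c_1u_1+\alpha z^2+\gamma yz+\beta y^2$, where $c_1,\alpha,\beta,\gamma$ are, up to sign, the coefficients in $P_{10}$ of $w^2u_1$, of $z^2$, of $wyz$, and of $w^3x_1$ together with $w^2y^2$. Hence the local equation of $X'$ at $P_w$ has the form
\[u_1(c_1y+\theta z)+\bigl(u_1^3+\alpha yz^2+\gamma y^2z+\beta y^3\bigr)+(\text{order}\ge4)=0.\]

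Finally I would bring this to normal form. Using $\theta\ne0$, replace $z$ by the weight-$2$ coordinate $\ell=c_1y+\theta z$; the quadratic part becomes $u_1\ell$, of rank $2$, and absorbing all higher-order terms divisible by $u_1$ or by $\ell$ into redefinitions of $\ell$ and of $u_1$ leaves an equation $u_1\ell-h(y)=0$ with $h(y)=ay^3+O(y^4)$, where $a=\beta-c_1\theta^{-1}\gamma+c_1^2\theta^{-2}\alpha$ is the coefficient of $y^3$ obtained by substituting $z=\theta^{-1}(\ell-c_1y)$ into $\alpha yz^2+\gamma y^2z+\beta y^3$. For general $P_{10}$ and $\theta$ we have $a\ne0$, so after rescaling $y$ the singularity at $P_w$ is $(u_1\ell-y^3=0)\subset\frac13(1_{u_1},2_\ell,2_y)$, which by Remark \ref{rmk!index-1-cover} is a $\frac19(1,5)$ singularity. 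Since the leading terms used to analyse the charts $U_{x_0},U_{u_1},U_t,U_z$ in Corollary \ref{cor!index-5-format} do not involve $\tau$, setting $\tau=0$ creates no other new singularity, so $X$ has exactly the singularities $\frac1{25}(1,14)$ and $\frac19(1,5)$. The main obstacle is the bookkeeping identifying the order-$\le2$ part of the pullback of $P_{10}$, and in particular checking that $c_1$ and the quadratic coefficients depend on independent coefficients of $P_{10}$, so that the cubic coefficient $a$ is nonzero for a general choice; this is the step that pins the singularity down to exactly $\frac19(1,5)$ rather than a worse $A_{\ge3}$-type point. Alternatively, the lemma follows at once from the identification in \S\ref{sect: RU hypersurface} of this Pfaffian model with the canonical model of the weighted hypersurface together with the corresponding statement proved there; the direct argument is nevertheless preferable for the deformation computations in the rest of \S\ref{sec: divisor}.
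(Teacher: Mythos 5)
Your proposal is correct and follows essentially the same route as the paper: show $P_w\in X'$ once $\tau w^4$ drops out of $R_{13}$, use $R_2,R_3,R_6,R_{10}$ to eliminate $x_0,x_1,u_0,t$, and read off from $R_{13}$ a hypersurface of the form $u_1\ell-y^3+\dots=0$ in $\tfrac13(1,2,2)$, identified as $\tfrac19(1,5)$ via Remark \ref{rmk!index-1-cover}. The only difference is that you track the linear contribution $c_1u_1$ of the monomial $w^2u_1$ in $P_{10}$ and perform the explicit coordinate change $\ell=c_1y+\theta z$, a point the paper's proof absorbs implicitly; this is a welcome refinement rather than a different argument.
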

\begin{proof}
The index 3 coordinate point $P_w$ is contained in $X$ because $w^4$ no longer appears in $R_{13}$ when $\tau=0$. In a neighbourhood of $P_w$, the relations $R_2,R_3,R_6,R_{10}$ eliminate $x_0,x_1,u_0,t$ respectively. Thus the local coordinates at $P_w$ are $y,u_1,z$. Since $\theta\ne0$ in general, $R_{13}$ locally defines $X$ as $u_1z=y^3+\dots$ in $\frac13(1_{u_1},2_z,2_y)$ (because in general the monomial $y^2w^2$ appears in $P_{10}$). By Remark \ref{rmk!index-1-cover}, this is a $\frac19(1,5)$-singularity.\end{proof}

 \begin{prop}\label{prop: 3+5} Suppose that $\tau=0$ and consider the surfaces $X_{\lambda,\theta}$ in $\PP(1,1,2,3,4,4,5,7)$ defined by
\[\Pf_4 \tilde M=\tilde M\tilde V=0\]
where
\[
\tilde M=\begin{pmatrix}
0&0&x_0 & x_1^2 & u_0 \\
&0&x_1 & y & u_1 \\
&&u_0 & x_1u_1 & t\\
&&&t&\theta u_1z\\
&&&&P\\
-\text{sym}&&&&
\end{pmatrix},\
\tilde V=\begin{pmatrix}
0\\-u_1^2\\ 0 \\w\\-y\\\lambda\\
\end{pmatrix} 
\]
and $\lambda,\theta$ are parameters satisfying $\lambda\theta=0$.

If $\lambda=\theta=0$ then $X_{0,0}$ is a surface with one $\frac1{25}(1,14)$-singularity and one $\frac1{18}(1,5)$-singularity.

If $\lambda\ne0$, $\theta=0$ then $X_{\lambda,0}$ is a surface $X_{3,10}\subset\PP(1,1,2,3,5)$ with one 
$\frac1{18}(1,5)$ singularity.

If $\lambda=0$, $\theta\ne0$ then $X_{0,\theta}$ is an RU-surface with an extra $\frac1{9}(1,5)$ singularity as in Lemma \ref{lem!index-15}.
\end{prop}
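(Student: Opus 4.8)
\textbf{Proof strategy for Proposition~\ref{prop: 3+5}.} The plan is to treat this as a two-parameter analogue of the smoothing argument in Proposition~\ref{prop!q-gor-index-5}, with $\tau$ now fixed at $0$ and $\lambda,\theta$ the two varying parameters constrained by $\lambda\theta=0$. First I would verify that the listed $14$ relations really are the $4\times4$ Pfaffians and the three product relations $\tilde M\tilde V=0$: this is a direct expansion of the skew $6\times6$ matrix $\tilde M$ against $\tilde V$, entirely parallel to the check in Corollary~\ref{cor!index-5-format}, and it shows that $X_{\lambda,\theta}$ is well-defined as a codimension-$3$ subscheme of $\PP(1,1,2,3,4,4,5,7)$ for all $(\lambda,\theta)$ with $\lambda\theta=0$.

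Next I would analyze the three cases separately, each time locating the relevant coordinate points and reading off the local equation, exactly as in the proofs of Corollary~\ref{cor!index-5-format} and Lemma~\ref{lem!index-15}. For $\lambda=\theta\ne 0$ both are zero here, so:
\begin{itemize}
\item[] $(\lambda=0,\theta\ne0)$: then $\tilde M$, $\tilde V$ specialise back to the matrix $M$, $V$ of Corollary~\ref{cor!index-5-format} with $\tau=0$, so $X_{0,\theta}$ is exactly the $\tau=0$ RU-surface of Lemma~\ref{lem!index-15}, carrying the $\frac1{25}(1,14)$ point together with the extra $\frac19(1,5)$ point at $P_w$; one only needs to quote those two results.
\item[] $(\lambda\ne0,\theta=0)$: here the $\lambda$-entry of $\tilde V$ lets one use the product relations to eliminate a generator, collapsing the ambient space. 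I expect that eliminating, say, $z$ (or $t$) via one of the $\tilde M\tilde V$ equations in the chart $U_z$ identifies the remaining variables with coordinates on $\PP(1,1,2,3,5)$ and presents $X_{\lambda,0}$ as a single hypersurface $X_{3,10}$ there — one should check its weighted degree is $10$ and that it still contains the $\frac1{18}(1,5)$ point (the $\theta=0$ degeneration of the index-$9$ point, cf.\ the remark after Corollary~\ref{cor!is-canonical-ring} and the $\tau=\theta=0$ discussion in Lemma~\ref{lem: elliptic surface}), with no other singularities for general $P$.
\item[] $(\lambda=0,\theta=0)$: the worst case. Now $X_{0,0}$ is the $\tau=\theta=0$ RU-surface. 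One checks nonsingularity of all charts except $U_z$ and $U_w$ as in Corollary~\ref{cor!index-5-format}; at $P_z$ the same elimination ($R_{11},\dots,R_{14}$ removing $x_0,x_1,y,u_0$) gives $u_1^5-wt+\text{h.o.t.}=0$ in $\frac15(1,3,4)$, i.e.\ the $\frac1{25}(1,14)$ point; at $P_w$, with $\tau=\theta=0$ the relation $R_{13}$ degenerates so that after eliminating $x_0,x_1,u_0,t$ the local equation in $\frac13(1,2,1)$ becomes $u_1 z = \text{(degree $2$ in $z$, h.o.t.)}$ giving $u_1z = z^2+\dots$, i.e.\ $xy-z^2$ type, which by Remark~\ref{rmk!index-1-cover} is $\frac1{18}(1,5)$.
\end{itemize}

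I would organise the argument so the core computation is done once: introduce the $U_z$ and $U_w$ orbifold charts, eliminate the four generators dictated by the product relations, and then simply substitute $\lambda=0$ or $\theta=0$ into the resulting local hypersurface to read off the singularity in each case. Finally, to justify that for general $P$ (and general $\theta$, resp.\ general $\lambda$) there are no further singularities, I would run the same chart-by-chart nonsingularity check as in Corollary~\ref{cor!index-5-format}: the charts $U_y$, $U_{x_1}$, $U_{u_0}$ are absorbed by others via the binomial relations, and $U_{x_0},U_w,U_{u_1},U_t$ are smooth by a Jacobian computation, leaving only the two orbifold points already identified.

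\textbf{Main obstacle.} The delicate point is the degenerate case $\lambda=\theta=0$: one must confirm that the local equation at $P_w$ really acquires the form $xy-z^2$ (index-$1$ cover of $\frac1{18}(1,5)$) and not something milder like $\frac19(1,5)$ or more singular, which requires tracking precisely which monomials of the general $P_{10}$ survive the specialisations $\tau=\theta=0$ (in particular that $y^2w^2$ appears but the would-be $u_1$-linear term does not). Getting the coefficient bookkeeping right there — and simultaneously ruling out extra singular points coming from the $\lambda$-deformed relations — is where the real care is needed; the rest is routine Pfaffian manipulation and chart checking of the kind already carried out in Section~\ref{sec: canonical-ring}.
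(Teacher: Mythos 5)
Your overall strategy (case split on $(\lambda,\theta)$, quoting Lemma \ref{lem!index-15} for $\lambda=0,\theta\ne0$, and a chart-by-chart local analysis at $P_z$ and $P_w$ modelled on Corollary \ref{cor!index-5-format}) is the same as the paper's, but two of your three cases contain concrete errors. First, in the case $\lambda\ne0$, $\theta=0$: the notation $X_{3,10}\subset\PP(1,1,2,3,5)$ denotes a codimension-two complete intersection of degrees $3$ and $10$, not "a single hypersurface", and your proposed elimination of $z$ or $t$ cannot produce the ambient $\PP(1,1,2,3,5)$ (removing $z$ leaves weights $1,1,2,3,4,4,7$; removing $t$ leaves $1,1,2,3,4,4,5$). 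The correct adaptation of Proposition \ref{prop!q-gor-index-5} uses $\tilde R_2,\tilde R_3,\tilde R_6$ to eliminate $u_0,u_1,t$; since $\tau=0$ the relation $\tilde R_1$ no longer contains $\lambda^3\tau w$, so $w$ survives and one is left with $x_0,x_1,y,w,z$ of weights $1,1,2,3,5$ and the two equations $x_0y-x_1^3=0$ and $\lambda^3P+y^5-3x_1y^3w+3x_1^2yw^2-x_0w^3=0$. The variable $z$ is never eliminated.

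Second, in the case $\lambda=\theta=0$ your local equation at $P_w$ is wrong: the term $\theta u_1z$ in $R_{13}$ vanishes identically when $\theta=0$, so there is no $u_1z$ leading term. What survives of $R_{13}=yP+u_1^3$ after eliminating $x_0,x_1,u_0,t$ is $yu_1+yz^2+y^2z+\dots=0$ in $\frac13(1_{u_1},2_y,2_z)$, where the crucial leading monomial $yu_1$ comes precisely from the monomial $w^2u_1$ appearing in the general $P_{10}$ — the opposite of your claim that "the $u_1$-linear term does not" appear; without it the singularity would be worse. Moreover the normal form you are aiming for, $xy-z^2$, is not the index-one cover of $\frac1{18}(1,5)$: by Remark \ref{rmk!index-1-cover} that cover is $xy-z^{6}$ inside $\frac13(1,-1,1)$ (here $d=2$, $n=3$, $a=1$), and one reaches it from $yu_1+yz^2+y^2z+u_1^3+\dots$ by absorbing the extra terms into new coordinates, whereupon $u_1^3$ contributes the $z^6$. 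So the "delicate point" you flag is real, but the target you set for it would lead you to the wrong singularity type.
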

\begin{proof}Clearly, if $\theta\ne0$ then we are in the situation described by Lemma \ref{lem!index-15}.

If $\lambda\ne0$, we can adapt the proof of Proposition \ref{prop!q-gor-index-5}. This time, since $\tau=0$, the generator $w$ can not be eliminated. It turns out that the general fiber $X_{\lambda,0}$ has equations
\[X_{\lambda,0}\colon(x_0y-x_1^3=\lambda^3P+y^5-3x_1y^3w+3x_1^2yw^2-x_0w^3=0)\subset\PP(1,1,2,3,5).\]
That is, $X_{\lambda,0}$ has a $\frac1{18}(1,5)$ singularity (see \cite{FPRR}).

If $\theta=\lambda=0$ then we get a RU-surface with an extra singularity of index 3. Near the coordinate point $P_w$ we use $R_2,R_3,R_6,R_{10}$ to eliminate $x_0,x_1,u_0,t$ respectively. Then $R_{13}$ cuts out a hypersurface 
\[(yu_1+yz^2+y^2z+\dots=0)\subset\frac13(1_{u_1},2_y,2_z),\]
because the monomial $w^2u_1$ appears in $P_{10}$.
This is local analytically a $\frac1{18}(1,5)$-singularity.
\end{proof}

We have thus shown that both the Gieseker component and the RU-component contain in their closures a divisor parametrising surfaces with an additional T-singularity of index $3$ and these divisors meet the intersection divisor, i.e., the divisor of cuspidal RU-surfaces in an irreducible subset of codimension two parametrising surfaces with one $\frac1{25}(1,14)$-singularity and one $\frac1{18}(1,5)$-singularity.

These correspond to the central fiber of the family ($\lambda = \tau = \theta = 0 $) and their minimal resolution is an elliptic surface with a $(-3)$-section and a singular fiber of type III (see Lemma \ref{lem: elliptic surface}). Thus geometrically, these surfaces can be obtained as follows.

\begin{ex}\label{ex: sing 5+3}
 We consider an elliptic surface with $p_g=2$,  
 a fiber of type III,  and a $(-3)$-section.

We blow-up the singular point $p_1$ and its infinitesimal point $p_2$ given by the intersection of the two branches of the singular fiber.  We blow-up  two more points as shown  in  Figure  \ref{fig: 5+3}.

With this procedure we obtain   
a string $[2,5,3] $ and a string $[4,3,2]$ connected by a $(-1)$-curve. Blowing down the two strings we obtain an I-surface with a singularity of type $\frac{1}{18}(1,5)$ and a singularity of type $\frac{1}{25}(1,14)$. 
\end{ex}

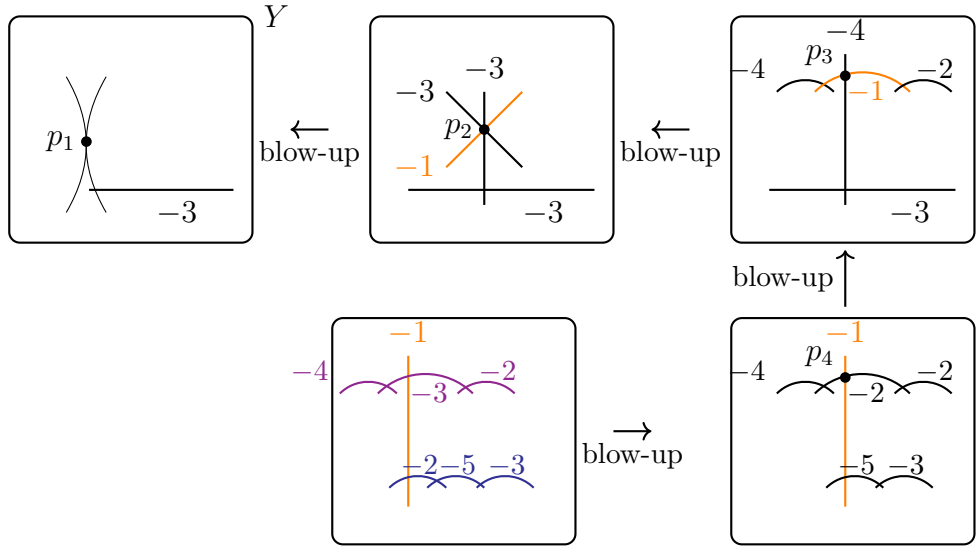
\begin{figure}[ht]
\begin{center}
\begin{tikzpicture}
[curve/.style ={thick, every loop/.style={looseness=10, min distance=30}},
exceptional/.style = {Blue},
scale = 0.5
]

\begin{scope}[xshift=-17cm]
\draw[thick, rounded corners] (-3, -2) rectangle (3.4, 4) node[ right]{$Y$};
\draw (-1.5, -1.2) to[bend right] node[pos = 0.8, right]{} ++(0,3.6);
\draw (-0.45, -1.2) to[bend left] node[pos = 0.8,left] {} ++(0,3.6);

\draw [curve] (-0.9,-0.6)  to node [right,below] { \ \ \ $-3$}  node[above] {} (2.9,-0.6);
\draw [curve,<-] (4.4, 1) to node[above]  {} node[below] {\small blow-up} ++(1,0);
\coordinate (p1) at (-0.97, 0.68);
\fill (p1) circle [radius = 4pt] node[left] {$p_1$};
\end{scope}

\begin{scope}[xshift=-7.5cm]
\draw[thick, rounded corners] (-3, -2) rectangle (3.4, 4) node[ right]{};
\draw [curve] (-1,2) node[below, left]{$-3$} to  ++ ( 2,-2 );
\draw [curve, orange] (-1 , 0) node[above,left ]{$-1$} to  ++(2,2) node[ below ]{};
\draw [curve] (0,-1) node[left]{} to  (0,2) node[above] {$-3$};
\draw [curve] (-2,-0.6)  to node [right,below] { \ \ \ \ \ \ \ \ $-3$}  node[above] {} (2.9,-0.6);
\draw [curve,<-] (4.4, 1) to node[above]  {} node[below] {\small blow-up} ++(1,0);
\coordinate (p2) at (0, 1);
\fill (p2) circle [radius = 4pt] node[left] {$p_2$};
\end{scope}

\begin{scope} [xshift=2cm]
\draw[thick, rounded corners] (-3, -2) rectangle (3.4, 4) node[ right]{};
\draw [curve] (0,-1) node[left]{} to  (0,3) node[above] {$-4$};
\draw [curve] (-2,-0.6)  to node [right,below] { \ \ \ \ \ \ \ \ \ $-3$}  node[above] {} (2.9,-0.6);

\draw [curve] (-1.8,2) node[above left ]{ \small $-4$} to[  looseness = 1]   ++(1.5,0);
\draw [curve, orange] (-0.8,2) node[right  ]{ \ \   \small $-1$} to[  looseness = 1]   ++(2.5,0);
\draw [curve] (1.3,2) node[above right  ]{\small \  $-2$} to[  looseness = 1]   ++(1.5,0);

\draw [curve,<-] (0, -2.2 ) to  node[left]{\text{\small blow-up}} node[right]{} ++(0,-1.5);
\coordinate (p3) at (0, 2.425);
\fill (p3) circle [radius = 4pt] node[above left] {$p_3$};

\end{scope}

\begin{scope}[yshift = -8cm, xshift=2cm]

\draw[thick, rounded corners] (-3, -2) rectangle (3.4, 4) node[ right]{};
\draw [curve, orange] (0,-1) node[left]{} to  (0,3) node[above] {$-1$};
\draw [curve] (-0.5,-0.5) node[above right ]{ \small $-5$} to[  looseness = 1]   ++(1.5,0);
\draw [curve] (0.8,-0.5) node[above  right]{ \small $-3$} to[  looseness = 1]   ++(1.5,0);
\coordinate (p4) at (0, 2.425);
\fill (p4) circle [radius = 4pt] node[above left] {$p_4$};

\draw [curve] (-1.8,2) node[above left ]{ \ \ \small $-4$} to[  looseness = 1]   ++(1.5,0);
\draw [curve] (-0.8,2) node[right  ]{ \ \  \small $-2$} to[  looseness = 1]   ++(2.5,0);
\draw [curve] (1.3,2) node[above right  ]{\small \  $-2$} to[  looseness = 1]   ++(1.5,0);

\end{scope}

\begin{scope}[yshift = -8cm, xshift=-8.5cm]

\draw[thick, rounded corners] (-3, -2) rectangle (3.4, 4) node[ right]{};
\draw [curve, orange] (-1,-1) node[left]{} to  (-1,3) node[above] {$-1$};
\draw [curve,exceptional] (-1.5,-0.5) node[above right ]{ \small $-2$} to[  looseness = 1]   ++(1.5,0);

\draw [curve,exceptional] (-0.5,-0.5) node[above right ]{ \small $-5$} to[  looseness = 1]   ++(1.5,0);
\draw [curve,exceptional] (0.8,-0.5) node[above  right]{ \small $-3$} to[  looseness = 1]   ++(1.5,0);

\draw [curve,Plum] (-2.8,2) node[above left ]{ \small $-4$} to[  looseness = 1]   ++(1.5,0);
\draw [curve,Plum] (-1.8,2) node[right  ]{ \ \   \small $-3$} to[  looseness = 1]   ++(2.5,0);
\draw [curve,Plum] (0.3,2) node[above right  ]{\small \  $-2$} to[  looseness = 1]   ++(1.5,0);

\draw [curve,->] (4.4, 1) to node[above]  {} node[below] {\small blow-up} ++(1,0);

\end{scope}

\end{tikzpicture}
\end{center}
\caption{Construction of an  I-surface with a singularity of type $\frac{1}{18}(1,5)$ and a singularity of type $\frac{1}{25}(1,14)$.}\label{fig: 5+3}

\end{figure}

There is a second way to construct an I-surface with a singularity of type $\frac1{18}(1,5)$ and a singularity of type $\frac1{25}(1,14)$.  Algebraically, we assume that $\lambda = \tau=0$ and the coefficient of $t_1^{16}$ in $l_{16}'(t_0,t_1)$ vanishes (again, see Lemma \ref{lem: elliptic surface}). Then the elliptic surface $Y$ has an $I_3$ fiber over $(0,1;0,1,0)$. Since $\theta$ is generic here, this example is not smoothable as can also be deduced from the non-existence of an involution.

\begin{ex}\label{ex2: sing 5+3}
 We consider an elliptic surface with $p_g=2$,  
 a fiber of type $I_3$,  and a $(-3)$-section.

We blow-up the singular points $p_1,p_2$  of the singular fiber, and two infinitesimal point $p_3,p_4$  as shown  in  Figure  \ref{fig2: 5+3}.

With this procedure we obtain   
a string $[2,5,3] $ and a string $[4,3,2]$ connected by two  $(-1)$-curves. Blowing down the two strings we obtain an I-surface with a singularity of type $\frac{1}{18}(1,5)$ and a singularity of type $\frac{1}{25}(1,14)$. 
\end{ex}

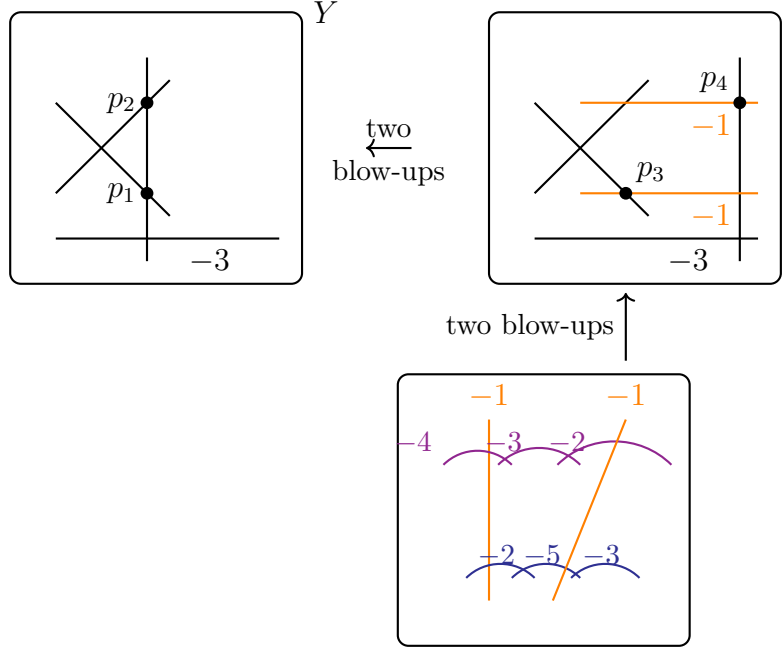
\begin{figure}
\begin{center}
\begin{tikzpicture}
[curve/.style ={thick, every loop/.style={looseness=10, min distance=30}},
exceptional/.style = {Blue},
scale = 0.6
]

\begin{scope}[xshift=-17cm]
\draw[thick, rounded corners] (-3, -2) rectangle (3.4, 4) node[ right]{$Y$};
\draw [curve] (-2,2) node[below, left]{} to  ++ ( 2.5,-2.5);
\draw [curve] (-2 , 0) node[above,left ]{} to  ++(2.5,2.5) node[ below ]{};
\draw [curve] (0,-1.5) node[left]{} to  (0,3) node[above] {};
\draw [curve] (-2,-1)  to node [right,below] { \ \ \ \ \ \ \ \ $-3$}  node[above] {} (2.9,-1);
\draw [curve,<-] (4.8, 1) to node[above]  {\small two} node[below] {\small blow-ups} ++(1,0);

\coordinate (p1) at (0, 0);
\fill (p1) circle [radius = 4pt] node[left] {$p_1$};

\coordinate (p2) at (0, 2);
\fill (p2) circle [radius = 4pt] node[left] {$p_2$};

\end{scope}

\begin{scope}[xshift=-6.5cm]
\draw[thick, rounded corners] (-3, -2) rectangle (3.4, 4) node[ right]{};
\draw [curve] (-2,2) node[below, left]{} to  ++ ( 2.5,-2.5);
\draw [curve] (-2 , 0) node[above,left ]{} to  ++(2.5,2.5) node[ below ]{};
\draw [curve] (2.5,-1.5) node[left]{} to  (2.5,3) node[above] {};
\draw [curve] (-2,-1)  to node [right,below] { \ \ \ \ \ \ \ \ $-3$}  node[above] {} (2.9,-1);

\draw [curve,orange] (-1,0)  to node [right,below] { \ \ \ \ \ \ \ \ $-1$}  node[above] {} (2.9,0);

\draw [curve,orange] (-1,2)  to node [right,below] { \ \ \ \ \ \ \ \ $-1$}  node[above] {} (2.9,2);
\draw [curve,<-] (0, -2.2 ) to  node[left]{\text{\small two blow-ups}} node[right]{} ++(0,-1.5);

\coordinate (p3) at (0, 0);
\fill (p3) circle [radius = 4pt] node[above right] {$p_3$};

\coordinate (p4) at (2.5, 2);
\fill (p4) circle [radius = 4pt] node[above left] {$p_4$};

\end{scope}

\begin{scope}[yshift = -8cm, xshift=-8.5cm]

\draw[thick, rounded corners] (-3, -2) rectangle (3.4, 4) node[ right]{};
\draw [curve, orange] (-1,-1) node[left]{} to  (-1,3) node[above] {$-1$};
\draw [curve,exceptional] (-1.5,-0.5) node[above right ]{ \small $-2$} to[  looseness = 1]   ++(1.5,0);

\draw [curve,exceptional] (-0.5,-0.5) node[above right ]{ \small $-5$} to[  looseness = 1]   ++(1.5,0);
\draw [curve,exceptional] (0.8,-0.5) node[above  right]{ \small $-3$} to[  looseness = 1]   ++(1.5,0);

\draw [curve,Plum] (-2,2) node[above left ]{ \small $-4$} to[  looseness = 1]   ++(1.5,0);
\draw [curve,Plum] (-0.8,2) node[above]{\  \small $-3$} to[  looseness = 1]   ++(1.8,0);
\draw [curve,Plum] (0.5,2) node[above]{\small \ \  $-2$} to[  looseness = 1]   ++(2.5,0);

\draw [curve, orange] (0.4,-1) node[left]{} to  (2,3) node[above] {$-1$};

\end{scope}

\end{tikzpicture}
\end{center}
\caption{Construction of an  I-surface with a singularity of type $\frac{1}{18}(1,5)$ and a singularity of type $\frac{1}{25}(1,14)$.}
\label{fig2: 5+3}

\end{figure}

\begin{rmk} Let $X$ be an I-surface with a singularity of type $\frac{1}{18}(1,5)$ and a singularity of type $\frac{1}{25}(1,14)$.
Arguing as in Proposition \ref{no 5+2 sing} one can see that 
 $X$ is as in the above Examples \ref{ex: sing 5+3}, \ref{ex2: sing 5+3}.
\end{rmk}

\subsection{I-surfaces with a singularity of  index 2 and a singularity of index 3}\label{subsection: sing 2+3}
Let us write the results of \cite{FPRR} in a slightly different form to exhibit clearly the intersection of the  index $2$ and index $3$ divisors: consider the surfaces
\[X=X_{\mu,\nu,\tilde f}\colon\begin{pmatrix}x_0y-x_1^3-\mu u=0 \\ z^2-\nu y^5 - \tilde f_{10}(x_0,x_1,y,u)=0\end{pmatrix}
\subset\PP(1,1,2,3,5),\]
where $\mu, \nu$ are parameters and $\tilde f_{10}$ is sufficiently general  but not containing the monomial $y^5$. 
This is an admissible family of stable surfaces and
\begin{itemize}
 \item for $\mu\nu\neq0$ we can eliminate the variable $u$ via the first equation and get a classical I-surface;
 \item for $\mu\neq 0$, $\nu=0$, we again eliminate $u$ but the branch divisor passes through the vertex of the cone and we acquire an $\frac 14 (1,1)$ point;
 \item for $\mu= 0$, $\nu\neq 0$, we get a general member in the divisor parametrising I-surfaces with an $\frac{1}{18}(1,5)$ point;
 \item for $\mu = \nu = 0$ we get an I-surface with both an $\frac 14 (1,1)$ point and an $\frac{1}{18}(1,5)$ point.  Indeed, at $P_y$, the second equation reduces to $z^2-u^2+x_1^2+\text{h.o.t.}=0$ in $\frac12(1_{x_1},1_{u},1_z)$ after eliminating $x_0$ using $x_1^3$ via the first equation. This is a $\frac14(1,1)$ singularity by Remark \ref{rmk!index-1-cover}.
\end{itemize}
A small dimension count thus confirms that these two T-divisors in the closure of the Gieseker components intersect as expected in an irreducible subset of codimension two whose general element is a surface as in the last item. 

We  complement this algebraic discussion by an  explicit geometric construction of surfaces in the intersection. 

\begin{ex}\label{ex2: sing 2+3}
 We consider an elliptic surface with $p_g=2$, an $I_2$ fiber, an $I_r$ fiber ($r\ge2$) and a $(-3)$-section.

We blow-up the singular points $p_1,p_2$ of the fiber of type $I_2$.

With this procedure we obtain   
a string $[4] $ and a string $[4,3,2]$ connected by two $(-1)$-curves (see Figure \ref{fig2: 2+3}). Blowing down the two strings we obtain an I-surface with a singularity of type $\frac{1}{18}(1,5)$ and a singularity of type $\frac{1}{4}(1,1)$. 
\end{ex}

\begin{figure}
\begin{center}
\begin{tikzpicture}
[curve/.style ={thick, every loop/.style={looseness=10, min distance=30}},
exceptional/.style = {Orange},
scale = 0.7
]

\begin{scope}[xshift=-9.5cm]
\draw[thick, rounded corners] (-2.5, -1.5) rectangle (2.5, 3) node [above right] {$Y$};
\begin{scope}[curve]
\draw  (-2,-0.3)  to  node[above]{\small $-3$} (2,-0.3);
\draw[line width = .2cm,   white] (1.2, -1) to[bend right]  ++(0,3.6);
\draw (1.2, -1.2) to[bend right] node[pos = 0.8, right]{\small $-2$} ++(0,3.6);
\draw (1.5, -1.2) to[bend left] node[pos = 0.8,left] {\small $-2$} ++(0,3.6);
\draw (-1.4, -1.2) to node[pos = 0.8, right]{\small $-2$} ++(0,3.6);
\coordinate (p1) at (1.35, -0.9);
\fill (p1) circle [radius = 4pt] node[right] {\small $p_1$};
\coordinate (p2) at (1.35, 2.1);
\fill (p2) circle [radius = 4pt] node[right] {\small $p_2$};
\end{scope}
\draw [curve,<-] (3.7, 1) to node[above] {two} node[below] {\small blow-ups} ++(2,0);
\end{scope}

\begin{scope}
\draw[thick, rounded corners] (-2.5, -1.5) rectangle (2.5, 3) node [above right] {$\tilde Y$};

\draw [curve, orange] (1.1,-1) node[left]{} to  (1.1,2.3) node[above] {\small $-1$};
\draw [curve, orange] (2,-1) node[left]{} to  (2,2.3) node[above] {\small $-1$};

\draw [curve,blue] (0,1.4) node[above]{ \small $-4$} to[  looseness = 1]   ++(2.2,0);

\draw [curve,Plum] (-1.4,-1) to node[pos=0.8,left]{\small $-2$} ++(0,3.4);
\draw [curve,Plum] (-1.6,-0.5) node[right]{\ \ \small $-3$} to[  looseness = 1]   ++(2,0);
\draw [curve,Plum] (0,-0.5) node[right]{\small  $-4$} to[  looseness = 1]   ++(2.2,0);
\end{scope}

\end{tikzpicture}
\end{center}
\caption{Construction of an  I-surface with a singularity of type $\frac{1}{18}(1,5)$ and a singularity of type $\frac{1}{4}(1,1)$.}\label{fig2: 2+3}

\end{figure}
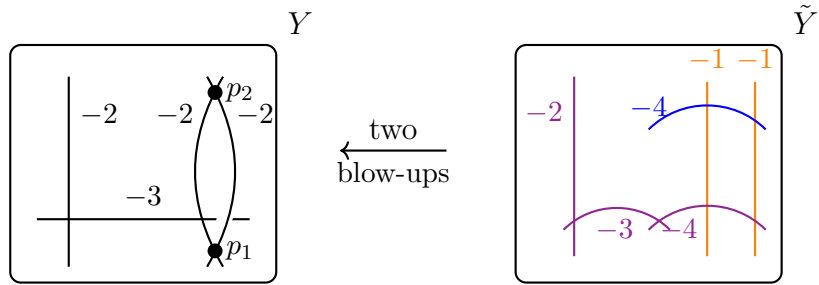

\begin{rmk} Let $X$ be an I-surface with a singularity of type $\frac{1}{18}(1,5)$ and a singularity of type $\frac{1}{4}(1,1)$.
Arguing as in Proposition \ref{no 5+2 sing} one can see that 
 $X$ is as in the above Example \ref{ex2: sing 2+3}.
\end{rmk}

\bibliographystyle{alpha}
\bibliography{cfprr}

\end{document}